\newtheorem{theo}{Theorem}[section]
\newtheorem{coro}[theo]{Corollary}
\newtheorem{prop}[theo]{Proposition}
\newtheorem{lemm}[theo]{Lemma}
\newtheorem{add}[theo]{Addendum}
\newtheorem*{clai}{Claim}
\newtheorem{quest}{Question}
\theoremstyle{definition}
\newtheorem{rema}[theo]{Remark}
\newtheorem{defi}[theo]{Definition}
\numberwithin{equation}{section}
   \def\HH{{\mathbb H}}
 \def\NN{{\mathbb N}}  
 \def\RR{{\mathbb R}}  \def\TT{{\mathbb T}}
 \def\ZZ{{\mathbb Z}}
\def\Si{\Sigma}
\def\Ga{\Gamma}
  \def\cG{\mathcal{G}} \def\cM{\mathcal{M}} 
\def\cC{\mathcal{C}}    \def\cU{\mathcal{U}}
\def\cE{\mathcal{E}}    
\def\cF{\mathcal{F}}  \def\cL{\mathcal{L}}
\newcommand{\diff}{\operatorname{Diff}}
\newcommand{\en}{\subset}
\newcommand{\eps}{\varepsilon}
\newcommand{\ie}{{\it i.e., } }
\newcommand{\eg}{{\it e.g., } }
\author[C. Bonatti]{Christian Bonatti}
\address{Institut de Math. de Bourgogne CNRS - URM 5584, Universit\'e de Bourgogne Dijon 21004, France}
\author[A. Gogolev]{Andrey Gogolev}
\address{ SUNY Binghamton, N.Y., 13902, U.S.A}
\author[R. Potrie]{Rafael Potrie}
\address{CMAT, Facultad de Ciencias, Universidad de la Rep\'ublica, Uruguay}
\title[Anomalous partially hyperbolic examples II]{Anomalous partially hyperbolic diffeomorphisms II: stably ergodic examples.}
\thanks{A.G. was partially supported by NSF grant DMS-1266282; R.P. was partially supported by CSIC group 618 and the Palis Balzan project.}
\begin{document}
\maketitle

\begin{abstract}
We construct examples of robustly transitive and stably ergodic
partially hyperbolic diffeomorphisms $f$ on  compact $3$-manifolds with fundamental groups of exponential growth
such that $f^n$ is not homotopic to identity for all $n>0$.
These provide counterexamples to a classification conjecture of Pujals.

{ \medskip \noindent \textbf{Keywords:} Partially Hyperbolic
Diffeomorphisms, 3-manifolds, Classification, Robust Transitivity, Stable
Ergodicity.

\noindent \textbf{2010 Mathematics Subject Classification:}
Primary:  37D30 ,37C15 }
\end{abstract}

\section{Introduction}

Let $M$ be a Riemannian $3$-manifold. In this paper, we say that a diffeomorphism $f\colon M \to M$ is \emph{partially hyperbolic}
if the tangent bundle $TM$ splits into three one-dimensional $Df$-invariant continuous subbundles
$TM= E^{ss}\oplus E^c \oplus E^{uu}$ such that for some $\ell>0$ and for every $x \in M$
$$ \|Df^\ell|_{E^{ss}(x)} \| < \min \{1, \|Df^\ell|_{E^c(x)}\|\} \leq
\max \{1, \|Df^\ell|_{E^c(x)}\| \} < \|Df^\ell|_{E^{uu}(x)} \|.
$$

Sometimes, a stronger notion of \emph{absolute partial hyperbolicity} is used.
This means that $f$ is partially hyperbolic and there exists $\lambda < 1 < \mu$ such that

$$ \|Df^\ell|_{E^{ss}(x)} \| < \lambda<  \|Df^\ell|_{E^c(x)}\| < \mu < \|Df^\ell|_{E^{uu}(x)}
\| $$
\medskip
The subbundles $E^{ss}$, $E^c$ and $E^{uu}$ depend on $f$ and we will indicate this, when needed, using a subscript, \eg $E^{ss}_f$.

In 2001, it was informally conjectured in a talk by E. Pujals\footnote{The conjecture was formalized in~\cite{BW},
see also~\cite[Section 2.2]{HasPes} and~\cite{CHHU}.}  that all examples of transitive partially hyperbolic diffeomorphisms,
up to taking finite lifts and iterates, fall into one of the following classes:
\renewcommand\labelenumi{\theenumi.}

\begin{enumerate}
 \item deformations of linear Anosov automorphisms of the 3-torus $\TT^3$;
 \item deformations of skew products over a linear Anosov automorphism of the 2-torus $\TT^2$;
 \item deformations of time-one maps of Anosov flows.
\end{enumerate}

In this paper we provide counterexamples to Pujals' conjecture.

\begin{theo}\label{t.main} There exist a closed orientable $3$-manifold $M$ and an
absolutely partially
hyperbolic diffeomorphism $f\colon M\to M$ which satisfies the following properties
\begin{itemize}
\item $M$ admits an Anosov flow;
\item $f^n$ is not homotopic to the identity map for all $n>0$;
\item $f$ is volume preserving;
\item $f$ is robustly transitive and stably ergodic.
\end{itemize}
\end{theo}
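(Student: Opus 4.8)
The plan is to realize $f$ as a time-$\tau$ map of a volume-preserving Anosov flow, modified by a slow Dehn twist along a transverse torus, and then to make it stably ergodic by a perturbation that produces accessibility.

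\emph{The ambient flow.} First I would choose a closed orientable $3$-manifold $M$ carrying a volume-preserving transitive Anosov flow $\Phi_t$ such that: $\pi_1(M)$ has exponential growth; $M$ is not, even up to finite covers, a torus bundle nor a circle bundle over $\TT^2$, so the topology alone excludes Pujals' classes (1)--(2); $M$ contains an embedded incompressible torus $T$ transverse to $\Phi_t$ along which the Dehn twist $\tau_T$, in a direction $c\in\pi_1(T)\cong\ZZ^2$ to be chosen, has infinite order in the mapping class group of $M$; and, after reparametrizing $\Phi_t$ near $T$ (which preserves the Anosov property and, via a Moser correction, the invariant volume), the flow is modelled near $T$ on a linear suspension flow: there is a collar $U\cong\TT^2\times(-\delta,\delta)$ in which $\Phi_t(y,s)=(y,s+t)$ and in which $E^{ss},E^{uu}$ restrict, on each slice $\TT^2\times\{s\}$, to constant line fields. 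Flows of this type exist: a nontrivial Fried/Handel--Thurston surgery on a suspension Anosov flow, performed away from a fibre torus $T$, produces such an $M$, which can be arranged to be a graph manifold with exponentially growing $\pi_1$ for which $\tau_T$ has infinite order.

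\emph{The example and the homotopical obstruction.} Let $h\colon M\to M$ be supported in $U$ and there equal to $h(y,s)=(y+\beta(s)c,\,s)$, with $\beta\colon(-\delta,\delta)\to[0,1]$ equal to $0$ near $-\delta$ and to $1$ near $\delta$; a volume-fixing correction in $U$ makes $h$ volume-preserving without changing its mapping class $\tau_T^{\pm1}$. Put $f=h\circ\Phi_\tau$. Since $\Phi_\tau$ is isotopic to the identity through $\{\Phi_{u\tau}\}$, $f$ is homotopic to $h$, so $f^n$ is homotopic to $\tau_T^{\pm n}\neq\mathrm{id}$ for every $n>0$; this excludes class (3), as deformations of time-one maps of Anosov flows are homotopic to the identity. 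There remains the claim that $f$ is absolutely partially hyperbolic. The time-$\tau$ map $\Phi_\tau$ is absolutely partially hyperbolic with $E^c$ the flow direction and $\|D\Phi_\tau|_{E^c(x)}\|=\|X(\Phi_\tau x)\|/\|X(x)\|$ bounded independently of $\tau$, the stable/unstable rates being of order $e^{\mp a\tau}$. Inside $U$ one has $Dh\colon\partial_s\mapsto\partial_s+\beta'(s)c$ and $Dh=\mathrm{Id}$ on $T\TT^2$; choosing $\tau$ large and $\beta$ slow (possible since slowing the flow in the tube makes the coordinate width $\delta$, hence $\sup|\beta'|$, as small as one likes), $Dh$ is close to $\mathrm{Id}$, and --- crucially --- because $E^{ss},E^{uu}$ are constant along the $\TT^2$-slices, the (almost $\TT^2$-translation) diffeomorphism $h$ carries $E^{ss}(x),E^{uu}(x)$ close to $E^{ss}(h(x)),E^{uu}(h(x))$; the dominated cone conditions of $\Phi_\tau$ therefore survive composition with $h$, and absoluteness, being robust, persists. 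Establishing this compatibility between the twist and the strong bundles is the main technical obstacle of the construction.

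\emph{Stable ergodicity and robust transitivity.} A time-$\tau$ map of an Anosov flow is never accessible, so I would perturb $f$, inside the volume-preserving diffeomorphisms, to a stably accessible one --- using that stable accessibility is dense among partially hyperbolic diffeomorphisms of $3$-manifolds --- or else arrange the twist so that $f$ is accessible from the start; the perturbation preserves absolute partial hyperbolicity, volume-preservation, and, homotopy classes being $C^0$-locally constant, the property $f^n\not\simeq\mathrm{id}$. Since $E^c$ is one-dimensional with $\|Df|_{E^c}\|$ uniformly bounded, $f$ is center-bunched, so the Burns--Wilkinson criterion gives that $f$ is ergodic (indeed Bernoulli) for the volume, and, stable accessibility and center-bunching being robust, $f$ is stably ergodic. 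In particular $f$ and its volume-preserving perturbations are transitive; full robust transitivity follows from the robust minimality of the strong unstable foliation $W^{uu}$, which for $f$ is inherited from the minimality of the strong unstable foliation of the transitive Anosov flow $\Phi_t$ together with robust accessibility. Beyond the partial-hyperbolicity step, the arguments of this last paragraph are essentially standard given the known density of stable accessibility.
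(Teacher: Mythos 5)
Your overall strategy (compose a large time map of a volume-preserving Anosov flow transverse to an incompressible torus with a Dehn twist supported in a flow box, then perturb to gain accessibility and stable ergodicity) is the same as the paper's second construction, but the step you yourself call ``the main technical obstacle'' --- partial hyperbolicity of $h\circ\Phi_\tau$ --- is exactly where the proposal has a genuine gap. You postulate an Anosov flow which, in a collar of $T$, is a product flow whose strong bundles $E^{ss},E^{uu}$ are \emph{constant along the $\TT^2$-slices}, and you propose to obtain it by a Fried/Handel--Thurston surgery on a suspension performed away from $T$. But the strong bundles are determined by the asymptotic behaviour of the whole flow, so a surgery performed away from $T$ changes them near $T$ as well; there is no reason for them to remain constant (or even smooth) on the slices, and no example with this property is known. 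What one actually has (for the Bonatti--Langevin flow, Lemma~\ref{l.foliationsinterT}) is only that the foliations $F^s,F^u$ induced on $T$ have leaves of slope at most $1/4$, and the twist family $\varphi_s$ must be chosen so that $\varphi_s(F^u)\pitchfork F^s$: what survives the composition is transversality of cones, not closeness of $Dh$ to the identity. Moreover the strong bundles in the flow box are not tangent to the slices; they only become so in the limit of the rescaled coordinates $H_N$ as $N\to\infty$ (Lemma~\ref{l.limit}), which is the real reason the construction needs a long flow box.

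Your device for making $Dh$ close to the identity also fails. Since $\beta$ must increase from $0$ to $1$ across the collar, $\sup|\beta'|\ge 1/(2\delta)$: shrinking the collar makes the twist derivative large, not small. If instead you slow the flow by a reparametrization so that crossing the collar takes a long time, then the per-iterate expansion and contraction of $\Phi_\tau$ along $E^{uu},E^{ss}$ at points lingering in the collar degenerate towards $1$ (each iterate traverses only a short physical orbit segment), so the uniform constants $\lambda<1<\mu$ required for \emph{absolute} partial hyperbolicity are destroyed before you even compose with $h$. This is precisely why the paper does not reparametrize: it passes to large finite cyclic covers (Proposition~\ref{p.cycliccovering}), which increase the return time to the transverse torus while leaving the $(\ell,\lambda,\mu)$-strength of partial hyperbolicity untouched, and then runs a cone-field argument (Corollary~\ref{c.transversalityBL}, Proposition~\ref{p.phofBL}) exploiting that an orbit meets the flow box at most once in many iterates. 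Two further points are asserted rather than proved in your proposal: that the twist has infinite order in the mapping class group of your surgered manifold (the paper devotes Section~\ref{s.homology} to this for its example), and robust transitivity under \emph{dissipative} perturbations, which you attribute to ``robust minimality of $W^{uu}$ inherited from the flow''; that inheritance is unjustified after composing with a large twist, and the paper instead creates a blender activated by a normally hyperbolic periodic circle leaf (Proposition~\ref{prop:volumepresperturbationRT}).
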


Note that, because $M$ admits an Anosov flow, the fundamental group of $M$ has exponential growth.
Therefore $M$ and its finite covers do not admit Anosov automorphisms and partially hyperbolic skew products.
Further, because iterates of $f$ are not homotopic to identity, it follows that $f$, its iterates, and their finite lifts
are not homotopic to the time-one map of an Anosov flow. We conclude that the diffeomorphism $f$ given by Theorem~\ref{t.main},
indeed, gives a counterexample to the Pujals' conjecture.

We present two classes of examples, both of which yield the statement of Theorem~\ref{t.main}
\begin{itemize}
 \item one on the unit tangent bundle of a surface of genus two or higher;
 \item the second class is based on a transitive Anosov flow which admits a transverse torus disjoint with a periodic orbit.
\end{itemize}
\begin{rema}
 It is in fact plausible that the latter construction can be applied to any transitive Anosov flow with a transverse torus.
 However for the sake of simplicity and clarity, we will only present here an example based on
 the specific Anosov flow constructed in~\cite{BL}.
\end{rema}

\subsection{Overview of the constructions}
In both constructions we start with a Riemannian manifold with an Anosov flow and then perform a deformation (by changing the Riemannian metric in the first case and by considering finite lifts in the second case) which preserves the strength of the partial hyperbolicity of the Anosov flow. Both manifolds admit an incompressible torus (in the first case it contains two periodic orbits and in the second case it is transverse to the flow) and in both cases we consider a Dehn twist in a neighborhood of this torus which preserves the partially hyperbolic structure provided the deformation is sufficiently large. Let us make a more detailed outline.

The first construction is based on the time-one map $f\colon T^1S\to T^1S$ of the geodesic flow on a hyperbolic surface $S$.
We fix a simple closed geodesic $\gamma$ and consider a Dehn twist $\rho$ along $\gamma$.
Its differential induces diffeomorphism $D\rho\colon T^1S\to T^1S$.
To find a partially hyperbolic diffeomorphism in the mapping class of $D\rho\circ f$
we deform the hyperbolic metric within the space of hyperbolic metrics in such
a way that the length of $\gamma$ goes to zero. Time-one map $f$ deforms accordingly and stays partially hyperbolic.
Because the ``collar" of $\gamma$ becomes a very thin tube, it is possible to deform the Dehn twist $\rho$ so that it
becomes an ``almost isometry" of the surface. Hence, taking the composition $D\rho\circ f$ does not destroy partial
hyperbolicity of $f$. It is possible to adjust this perturbation in order to make it volume preserving and still have partial hyperbolicity. We will also point out that the same constructions works starting with the time-one map of Handel-Thurston Anosov flow.

Our second construction is an adaptation of the one in~\cite{BPP}.
We start with a conservative Anosov flow transverse to a torus $T$ (see~\cite{BL}).
We compose the time-$N$ map of the flow, for some large $N$, with a Dehn twist along the torus, supported on
a fundamental domain. In the non-transitive case considered in \cite{BPP} the unstable and strong unstable (resp. stable and strong stable) foliations
were kept unchanged in the negative (resp. positive) iterates of the fundamental domain; then the proof of partial hyperbolicity relied on the fact that, for $N$ large enough,
the Dehn twist preserves the transversality of the foliations. In the transitive case, $N$ cannot be chosen larger than the smallest return time on the torus;
moreover, none of the foliations are kept unchanged. However both difficulties would resolve if we could increase the return time without changing
the dynamics, in particular the strength of the partial hyperbolicity.  In the current paper we do it by considering the lift of the Anosov flow on a sufficiently large finite cyclic cover of the original manifold. 

It is possible to construct examples without considering finite lifts by using a different mechanism, not preserving the strength of the partial hyperbolicity. This requires a different approach and will be delegated to a future paper.

\subsection{Pujals' conjecture revisited}

A partially hyperbolic diffeomorphism $f$ is called \emph{dynamically coherent} if the subbundles $E^{ss}\oplus E^c$ and   $E^c \oplus E^{uu}$
are tangent to  invariant $2$-dimensional foliations, denoted by $W^{cs}_f$ and $W^{cu}_f$, respectively. Then, these foliations
intersect along an invariant $1$-dimensional foliation $W^c_f$ tangent to $E^c$. Under some technical assumptions, the pair $(f,W^c_f)$
is known to be structurally stable~\cite{HPS}:
for every diffeomorphism $g$, $C^1$-close to $f$, there is a homeomorphism $h$ conjugating the foliations $W^c_f$ and $W^c_g$ and the points
$h^{-1}gh(x)$ and $f(x)$  are uniformly bounded distance apart in the center leaf $W^c_f(x)$. We say that $h$ is a \emph{$W^c$-conjugacy}.

An example of non-dynamically coherent partially hyperbolic diffeomorphism on the torus $\mathbf{T}^3$ has been built by \cite{HHU-noncoherent}.
This example is not transitive and not absolutely partially hyperbolic.

Pujals' conjecture admits a stronger formulation, for dynamically coherent partially hyperbolic diffeomorphisms.
It asserts that all such diffeomorphisms must be $W^c$-conjugated  (up to finite iterates and lifts to finite covers) to one of the three models.
Details can be found in~\cite{CHHU}.

\begin{rema}
Counterexamples to this strong version of Pujals' conjecture were given recently in~\cite{BPP},
though the examples are not transitive.
We do not know if the transitive examples presented here are,
in fact, dynamically coherent (see subsection \ref{s.Questions}).
\end{rema}

\begin{rema}
Several positive classification results were established in~\cite{BW,BBI,BI,Par,HP1,HP} and certain families
of 3-manifolds are now known only to admit partially hyperbolic diffeomorphisms which are on Pujals' list.
\end{rema}

In our view, what Pujals was proposing is that it could be possible to reduce the classification of partially hyperbolic diffeomorphisms in dimension $3$
to the classification of Anosov flows (even though the latter are far from being classified).
The new examples greatly enrich the partially hyperbolic zoo in dimension 3.
Still, the program of reducing the classification to that of Anosov flows, should not be abandoned.

The following questions arise naturally:

\begin{quest}
Assume that a manifold $M$ with exponential growth of fundamental group admits a partially hyperbolic diffeomorphism. Does it also admit an Anosov flow?
\end{quest}

Notice that the main known obstruction for a manifold to admit an Anosov flow is the non-existence of Reebless foliations.
This is also an obstruction for the existence of partially hyperbolic diffeomorphisms~\cite{BI}.

\begin{quest}
Let $f\colon M \to M$ be a (dynamically coherent) partially hyperbolic diffeomorphism homotopic to the identity,
is it $W^c$-conjugate to an Anosov flow\footnote{As remarked in \cite{BW} it might be better to consider $W^c$-conjugacy
to a topologically Anosov flow for technical reasons. See \cite[Conjecture 1]{BW}.}?
\end{quest}

The counterpart of the above question on \emph{small manifolds}
(\ie with fundamental group of polynomial growth) has been addressed and admits a complete answer~\cite{HP1}.
Among manifolds with fundamental group of exponential growth, only the case of solvable fundamental group is known~\cite{HP}.

In fact, a natural (albeit somewhat vague) question which arises in view of our examples is the following:

\begin{quest}
Let $f\colon M \to M$ be a partially hyperbolic diffeomorphism on a 3-manifold $M$ admitting an Anosov flow.
What is the relationship between $f$ and this flow?

For instance, does $M$ admit an Anosov flow  $X$ so that both $f$ and $X$ leave positively (resp. negatively)
invariant the same strong unstable (resp. stable) cone-field ? Is the center-foliation of $f$ (if it exists) equivalent to the center foliation of a topologically Anosov flow? 
\end{quest}

 Note that a 3-manifold may admit many Anosov flows which are not topologically orbit equivalent (see \cite{BBY} and references therein) and so the answer to the previous question may depend on the Anosov flow on $M$.

\subsection{Further properties and questions}\label{s.Questions}

New examples are the source of new questions, but also a motivation to look again at previous ones. For example, in Section \ref{s.saddlenode} we show that one of our examples possesses\footnote{It is plausible that all our new examples have this property.} periodic center leaves with new type of dynamical behavior which was not present in previous examples (see \cite[Section 7.3]{BDV}). Here by \emph{periodic center leaf} we mean a complete curve tangent to $E^c$ invariant by some power of $f$. 

Other questions that must be tackled in view of the new examples pertain their dynamics, ergodicity in the volume preserving case, etc (see for example \cite{CHHU,WilkinsonSurvey}). Let us formulate some questions which we believe to be interesting:

\begin{quest} Are the new examples presented in this paper dynamically coherent?
\end{quest}

Since the examples are constructed by composing with a perturbation (with relatively large support) we do not have much control on the dynamics or structure of the bundles after perturbation (we establish partial hyperbolicity using cone-field criteria). Notice that thanks to some criteria introduced in \cite{BW} it is enough to show that the examples have a \emph{complete} center-stable manifold (meaning that the saturation of a center-stable leaf by strong stable ones is complete in the metric induced by the Riemannian metric of the manifold). If one shows dynamical coherence, it seems natural to test other properties too:

\begin{quest}
Are the examples plaque-expansive?
\end{quest}
(See \cite[Chapter 7]{HPS} for definitions.) It is natural to study the $W^c$-conjugacy type of the examples, what is the dynamics of the center leaves, etc. Also note that all previously known examples have the property that they admit models with \emph{smooth} center foliations.\footnote{\cite{FG} presents special examples of a higher dimensional dynamically coherent partially hyperbolic diffeomorphisms with non-smooth center foliation. It is plausible that these examples cannot be homotoped to ones with smooth center foliation. However they are $W^c$ conjugate to  partially hyperbolic diffeomorphisms with smooth center foliation.} It is unlikely that this will be the case for our examples, still we pose this as a question:
\begin{quest}
Is it possible to homotope any of the examples of the current paper to a partially hyperbolic diffeomorphism with a smooth center foliation?
\end{quest}

\subsection{Organization of the paper}

As we have already mentioned, the paper contains two families of examples which provide a proof of Theorem~\ref{t.main}. The presentation of each of the examples is independent and can be read in any order. The only exception is the proof of robust transitivity and stable ergodicity which is the same proof and is carried out in Subsection~\ref{sect:RTSE-T1S}.

In Section~\ref{s.Exampleunittangent} we present the example on the unit tangent bundle of a hyperbolic surface and a related example on a certain graph manifold. In Section \ref{s.ExampleBL} we present the example starting with a transitive Anosov flow transverse to a torus which is not a suspension. Also, in Section~\ref{s.saddlenode} we discuss properties of periodic center leaves for the latter example.

Finally, we made an effort to make this paper (topologically) self-contained and gave elementary proofs of certain known results on 3-manifolds relying on explicit description of the 3-manifolds at hand, see Section~\ref{s.homology} and also Remark~\ref{rema_HT_on_homology}.

\section{An example on the unit tangent bundle of a surface}\label{s.Exampleunittangent}

\subsection{A sequence of hyperbolic surfaces}

Let $S$ be an orientable closed surface of genus 2 or higher. A Riemannian metric on $S$ is hyperbolic if the curvature is constant $-1$.
Any closed surface $S$ endowed with a hyperbolic metric $g$ is called a hyperbolic surface. Any closed hyperbolic surface
$(S,g)$ has the hyperbolic plane $\HH^2$ as its universal cover.  In other words, $(S,g)$ is isometric
to the quotient of $\HH^2$ by a discrete co-compact subgroup of isometries of $\HH^2$  (Fuchsian group) acting freely on $\HH^2$; we denote by
$\Pi_g\colon\HH^2\to (S,g)$ this universal cover, which is a local isometry.
The group of deck transformations of the cover $\Pi_g$ is identified with
the fundamental group $\pi_1(S)$.

A marked surface is a surface endowed with a set of generators of $\pi_1(S)$.
The space $\textup{Teich}(S)$ of equivalence classes of marked constant $-1$ curvature Riemannian metrics on $S$
is called the Teichm\"uller space, and we refer to~\cite{FLP} for its properties.

Let $\gamma$ be an essential (\ie  $[\gamma]\neq 0$ in $\pi_1(S)$) simple closed curve in $S$.
 It is easy to see that there exists a sequence of hyperbolic metrics
$\{[g_n]\in\textup{Teich}(S); n\ge 1\}$ such that $\gamma$ is a geodesic for $g_n$ and its  length $\ell_n$ with respect to $g_n$  monotonically decreases to $0$ as $n\to\infty$ .
This can be seen, for instance, by using Fenchel-Nielsen coordinates on $\textup{Teich}(S)$. More precisely, there is a decomposition of the surface $S$ in
\emph{pair of pants}  (a \emph{pair of pants} is topologically the sphere $S^2$ minus the interior of $3$ disjoint discs), so that $\gamma$ is
one of the boundary component of a pair of pants. Then (see \cite{FLP}) the length of the boundary components of
the pair of pants  (and hence the length of $\gamma$)
can be chosen arbitrary.

We fix such a sequence of hyperbolic metrics $\{g_n; n\ge 1\}$.
We denote $\Pi_n=\Pi_{g_n}\colon\HH^2\to (S,g_n)$ the corresponding universal covers.

\subsection{Geodesic flows}

Let $TS$ be the tangent bundle of the surface $S$. We denote by $T^1S$ the \emph{unit tangent bundle} of $S$:
given a Riemannian metric $g$ on $S$, the unit tangent bundle is the level set
$$
\{v\in TS: \|v\|_g=1\}.
$$
It is a smooth circle bundle over $S$. As we will endow $S$ with a family of metrics, we can also define the unit tangent bundle
without using a specific metric:  $T^1S$ is the circle bundle over $S$ , defined as being the the quotient of
the bundle $TS\backslash S$, (where $S$ is the zero section) by identifying
$v\in T_x S\setminus \{0_x\}$ and $u\in T_xS\setminus\{0_x\}$ if and only if $v=cu$ for some $c>0$.

Note that, for hyperbolic metrics $g_n$ introduced earlier,
the geodesic flow on the tangent bundle $TS$ restricts to the level set $\{v: \|v\|_{g_n}=1\}$.
Hence, via the above canonical identifications of all unit tangent bundles with $T^1S$,
each metric $g_n$ gives rise to its  geodesic flow $\cG_n$ on $T^1S$.

Let $f_n\colon T^1S\to T^1S$ be the time-one map of the geodesic flow $\cG_n$ of $g_n$,
$n\ge 1$, and let $f\colon T^1\mathbb H^2\to T^1\mathbb H^2$ be the time-one map of the geodesic
flow on the hyperbolic plane $\HH^2$.

 Then, because each $(S,g_n)$ is covered by $\HH^2$, we have the following commutative diagram

\begin{equation}\label{eq_covering0}
\xymatrix{
 T^1\HH^2 \ar[r]^f\ar[d] &  T^1\HH^2 \ar[d]\\
  T^1S \ar[r]^{ f_n} & T^1S
}
\end{equation}
where the vertical arrows are induced by the derivative of the covering map $\Pi_n$.

The hyperbolic metric $g$ on $\HH^2$ (resp. $g_n$ on $S$) induces the Sasaki metric $\hat g$ on $T^1\HH^2$ (resp. $\hat g_n$ on $T^1S$).
We will only use the following properties of the Sasaki metrics:
\begin{itemize}
\item The metric $\hat g$ is invariant under derivatives of the isometries of $\HH^2$,
 \item The derivative of the projection $\Pi_n$ is a local isometry from $(T^1\HH^2,\hat g)$ to $(T^1S,\hat g_n)$.
 \item The Anosov splitting for the geodesic flow on $T^1\HH^2$ is orthogonal with respect to the Sasaki metric.
\end{itemize}

Thus we have the following commutative diagram
\begin{equation}\label{eq_covering}
\xymatrix{
 (T^1\HH^2, \hat g) \ar[r]^f\ar[d] &  (T^1\HH^2, \hat g)\ar[d]\\
  (T^1S, \hat g_n)\ar[r]^{ f_n} & (T^1S, \hat g_n)
}
\end{equation}
Here the vertical arrows are local isometries. This observation will be crucial for the proof of partial hyperbolicity of
the example which we are about to construct.

\subsection{Collar neighborhoods of the geodesic $\gamma$ for $g_n$}

Denote $S^1$ the circle $\RR/\ZZ$ and  by $C_n$, $n\ge 1$, the cylinder $[0,1]\times S^1$ equipped with hyperbolic metric
$$
dx^2+ \ell_n^2\cosh^2(x)dy^2,  (x,y)\in [0,1]\times S^1.
$$
One can visualize $C_n$ as follows: given an oriented geodesic $\sigma$ of $\HH^2$ there is a unique $1$-parameter group $h_{\sigma,t}$ of isometries preserving $\sigma$.  These isometries are called
\emph{translations of axis $\sigma$} and $h_{\sigma,t}$ acts on $\sigma$ as a translation of length $t$. For $t\neq 0$ $h_{\sigma,t}$ acts freely and properly on $\HH^2$:
thus  the quotient space $\HH^2/h_t$, for $t>0$, is a cylinder $\Si_t$ (diffeomorphic to $\RR\times S^1$) and, as $h_{\sigma,t}$ is an isometry, the hyperbolic metric $g$ goes down on an
hyperbolic metric $g^t$ on $\Si_t$. The cylinder $\Ga_t$ supports a unique closed geodesic $\sigma_t$, which is the projection of $\sigma$; its length is $t$.
We denote by $\Ga_{t,+}$ and $\Ga_{t,-}$ the two closed half cylinders obtained by cutting $\Ga_t$ along $\sigma_t$.  We denote by  $C_{t,1}$ the subset of $\Ga_{t,+}$ of point
whose (hyperbolic) distance from $\sigma_t$ is less than or equal to~$1$. This provides an equivalent description

$$C_n \quad \mbox{is isometric to}\quad  C_{l_n,1}$$

\medskip

Let us denote by $\gamma_n$ the curve $\gamma$ considered as a geodesic of $(S,g_n)$.
If we choose an orientation on the ``left" boundary component of $C_n$ and on the geodesic $\gamma_n$
then there exists a unique (up to the obvious $S^1$ action on $C_n$) locally isometric immersion
$$
\varphi_n\colon C_n\to (S,g_n),
$$
which sends the ``left" boundary component to $\gamma_n$ in orientation preserving manner as shown on Figure~\ref{fig-neck}.

Clearly we can assume that $\ell_n<2\sinh^{-1}(1/\sinh(1))$ for all $n\ge 1$.
By the Collar Lemma~\cite[p. 402]{FM}, this condition implies that $\varphi_n$, $n\ge1$, are, in fact, isometric embeddings.
From now on we will identify the cylinder $C_n$ with its image under $\varphi_n$ and refer to it as the {\it neck}.

\begin{figure}[htbp]\begin{center}
\input{fig-longneck.pstex_t}
\caption{}\label{fig-neck}
\end{center}\end{figure}


\subsection{Dehn twists}
Now define the Dehn twist $\rho_n\colon (S, g_n)\to (S, g_n)$ by
\begin{equation}
 \label{eq_rho}
\rho_n(p)=
\begin{cases}
p, \mbox{if}\,\, p\notin C_n \\
(x, y+\rho(x)), \mbox{if}\,\, p=(x,y)\in C_n
\end{cases}
\end{equation}
Here $\rho\colon[0,1]\to S^1$ is a $C^\infty$ ``twist function''; \ie $\rho$ has the following properties
\begin{itemize}
\item $\rho$ is $C^\infty$ flat at $0$ and at $1$;
\item $\rho(0)=\rho(1)$;
\item $\rho$ is increasing (we picked an orientation on $S^1$).
\end{itemize}

Next Proposition~\ref{prop_close_to_id} asserts that the Dehn twists tend to the identity maps in the $C^\infty$ topology as $n$ tends to infinity.
This statement does not have an obvious meaning, as each diffeomorphism $\rho_n$ is considered on $S$ endowed with the metric $g_n$:
thus the $C^\infty$ distance we consider on $S$ depends
  on $n$.
  The idea is to consider lifts on $\HH^2$ where the metric is fixed. Let us explain that precisely.

 \begin{defi} Consider  the upper half plane $\HH^2\subset \RR^2$. Let $h_n$ be a sequence of diffeomorphisms on $\HH^2$ and let $X_n\subset \HH^2$ be a sequence of sets. We say that
 $d_{C^\infty}(h_n|_{X_n},id)\to 0$ if for any $\varepsilon>0$ and any $m>0$ there is $n_0$ so that, for any $n\geq n_0$  we have the following property:\\
 for any $x\in X_n$ there is an isometry $\phi_x$ of $\HH^2$ so that $\phi_x(x)=(0,1)$ and so that all
 the derivatives of $\phi_x\circ h_n\circ\phi_x^{-1}$ of order less than
 or equal to $m$
 at the point $(0,1)$ are less than $\varepsilon$.

 \end{defi}

\begin{defi} Let $h_n$ be a sequence of diffeomorphisms on $S$ supported on the neck $C_n$, $n>0$ (that is, $h_n(x)=x$ for $x\notin C_n$).
Let $\tilde \gamma_n$ and $\tilde C_n$
be the lifts
of $\gamma_n$ and $C_n$ to $\HH^2$ so that $\tilde \gamma$ is a boundary component of the strip $\tilde C_n$.
Let $\tilde h_n$ be a lift of $h_n$ on $\HH^2$ such that $\tilde h_n$ is the identity on $\tilde \gamma_n$.

We say that the diffeomorphisms $h_n$ tend to the identity with respect to the $C^\infty$ distance on $(S,g_n)$,
and we write $d_{C^\infty,n}(h_n,id)\to 0$ if the restrictions of
$\tilde h_n$ to $\tilde C_n$ tends to the identity map in the $C^\infty$ topology.
\end{defi}

 We are now ready to state the key property of Dehn twists.

\begin{prop}\label{prop_close_to_id}
The sequence of Dehn twists $\rho_n\colon (S, g_n)\to (S, g_n)$ constructed above has the following property
$$
d_{C^\infty,n}(\rho_n,id_S)\to 0, n\to\infty.
$$
\end{prop}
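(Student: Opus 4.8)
The plan is to work on the universal cover $\HH^2$, where the metric is fixed, and to exhibit explicit lifts of $\rho_n$ on the lifted necks $\tilde C_n$, so that the convergence statement becomes a concrete estimate in the upper half-plane model. First I would use the isometric embedding $\varphi_n \colon C_n \to (S,g_n)$, together with the explicit description of $C_n$ as (half of) the cylinder $\Sigma_{\ell_n}$ obtained from $\HH^2$ by quotienting by the translation group $h_{\sigma,\ell_n}$ of axis a fixed geodesic $\sigma$. Choosing $\sigma$ to be, say, the vertical half-line $\{(0,t): t>0\}$ in the upper half-plane (or rather the geodesic from $0$ to $\infty$), the translations $h_{\sigma,t}$ are the hyperbolic dilations $z\mapsto e^t z$, and a fundamental domain for $h_{\sigma,\ell_n}$ is the region $\{1 \le |z| \le e^{\ell_n}\}$. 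The lift $\tilde C_n$ of the neck is then the portion of $\HH^2$ lying within hyperbolic distance $1$ of $\sigma$ on one side, i.e. a sector of fixed hyperbolic width $1$ but whose Euclidean "opening angle" is governed by nothing at all — it is the strip of points at hyperbolic distance $\le 1$ from the geodesic $\sigma$, which is a fixed region independent of $n$. The neck $C_n$ sits inside it as the image of the fundamental domain $\{1\le |z|\le e^{\ell_n}\}$ intersected with that strip.

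Next I would write the Dehn twist in these coordinates. In the $(x,y)$ coordinates on $C_n$ with metric $dx^2 + \ell_n^2\cosh^2(x)\,dy^2$, the map is $(x,y)\mapsto (x, y+\rho(x))$ with $y\in S^1=\RR/\ZZ$; its natural lift fixing the boundary geodesic $\tilde\gamma_n = \{x=0\}$ pointwise is the map $(x,\tilde y)\mapsto (x,\tilde y + \rho(x))$ on the universal cover of the cylinder, where now $\tilde y\in\RR$. To see this lift inside $\HH^2$ I need the coordinate change between $(x,\tilde y)$ and $z\in\HH^2$: $x$ is the signed hyperbolic distance to $\sigma$, and $\tilde y$ is, up to the factor $\ell_n$, the hyperbolic arclength parameter along the projection to $\sigma$. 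Crucially, the twist displaces a point $(x,\tilde y)$ by $\rho(x)$ in the $\tilde y$-coordinate, and since $\tilde y$ has coefficient $\ell_n^2\cosh^2(x)$ in the metric, a displacement of $\rho(x)$ in $\tilde y$ corresponds to a displacement of roughly $\ell_n\cosh(x)\,\rho(x)$ in hyperbolic arclength along a horocyclic-type curve — and on the neck $x\in[0,1]$, so $\cosh(x)$ is bounded by $\cosh(1)$, hence the actual hyperbolic displacement of every point of $\tilde C_n$ under $\tilde\rho_n$ is $O(\ell_n)\to 0$. This already shows $C^0$-convergence to the identity; the higher-derivative convergence follows because the twist function $\rho$ is fixed and smooth, all the coordinate vector fields and their brackets are controlled in terms of the fixed region $\tilde C_n$, and each derivative of the displacement picks up at worst a fixed constant times $\ell_n$ (the flatness of $\rho$ at the endpoints $0,1$ ensures everything glues $C^\infty$ across $\partial \tilde C_n$). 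In the precise language of the first Definition, for $x\in \tilde C_n$ one conjugates by an isometry $\phi_x$ sending $x$ to $(0,1)$; since $\tilde C_n$ lies in a fixed compact-up-to-the-$\sigma$-translations region, and $\tilde\rho_n$ commutes with $h_{\sigma,\ell_n}$, the conjugated maps $\phi_x\circ\tilde\rho_n\circ\phi_x^{-1}$ all have the form of a twist with amplitude $O(\ell_n)$ whose $C^m$ norm at $(0,1)$ is bounded by $c_m\,\ell_n$, uniformly in $x$.

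I would then assemble these observations: fix $\eps>0$ and $m>0$; choose $n_0$ so that $c_m\,\ell_n<\eps$ for $n\ge n_0$ (possible since $\ell_n\downarrow 0$); the displayed estimate in the Definition holds, giving $d_{C^\infty,n}(\rho_n,\mathrm{id}_S)\to 0$.

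The main obstacle I anticipate is bookkeeping the coordinate change between the intrinsic neck coordinates $(x,y)$ and the ambient half-plane coordinate $z$ carefully enough to get \emph{uniform} bounds on all derivatives up to order $m$, rather than just the $C^0$ or $C^1$ statement — in particular making sure that the factors of $\ell_n$ are never accidentally inverted when one differentiates in the $\tilde y$-direction, and that the flatness hypotheses on $\rho$ at $0$ and $1$ are genuinely what makes the lifted map extend smoothly by the identity outside $\tilde C_n$ with no loss of the $\ell_n$ gain. Everything else is a routine compactness-and-explicit-formula argument on a fixed region of $\HH^2$.
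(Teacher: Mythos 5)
Your argument is correct and is essentially the paper's proof: the paper simply rescales the neck coordinate $\bar y=\ell_n y$, so that $g_n$ takes the fixed Fermi form $d\bar x^2+\cosh^2(\bar x)\,d\bar y^2$ and the twist becomes $(\bar x,\bar y)\mapsto(\bar x,\bar y+\ell_n\rho(\bar x))$, whose $C^m$ norms are $O(\ell_n)$ --- exactly the normalization you carry out via the fixed-width strip in $\HH^2$ and conjugation by isometries. The worry you flag about inverted $\ell_n$ factors is resolved by this same rescaling, since the displacement is independent of $\bar y$, so no further work is needed.
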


\begin{proof}
Clearly we only need to pay attention to the neck $C_n\subset S$.
After rescaling the coordinates $(x,y)\mapsto(x, \ell_n y)=(\bar x,\bar y)$ the expression for $g_n$ becomes independent of $n$
\begin{equation}
\label{eq_hyp_metric}
g_n=d\bar x^2+\cosh^2(\bar x) d\bar y^2;
\end{equation}
The formula for $\rho_n$ becomes
$$
\rho_n(\bar x,\bar y)=(\bar x,\bar y+\ell_n\rho(\bar x)).
$$
The proposition follows because $\rho'$ and it's higher derivatives are uniformly bounded and $\ell_n\to 0$ as $n\to\infty$. (Here we slightly abused notation by writing $\ell_n\rho$ for the same ``twist function'' $[0,1]\to(S^1,d\bar y)$ into the rescaled circle.)
\end{proof}
The following is an immediate corollary.
\begin{coro}\label{cor_almost_isometry}
Let $\rho_n^*g_n$ denote the pull-back metric of $g_n$ using $\rho_n$. Then
$$
d_{C^\infty}(\rho_n^*g_n,g_n)\to 0, n\to\infty.
$$
\end{coro}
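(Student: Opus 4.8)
The plan is to reduce the statement about the pull-back metrics on $(S,g_n)$ to a statement on the fixed background $\HH^2$, exactly as in the definition of $d_{C^\infty,n}$, and then to differentiate the identity $\rho_n^*g_n = g_n + (\rho_n^*g_n - g_n)$ and estimate the second term using Proposition~\ref{prop_close_to_id}. First I would note that both $g_n$ and $\rho_n^*g_n$ agree off the neck $C_n$, so it suffices to work inside $C_n$, and there I would pass to the lifts: let $\tilde C_n\subset\HH^2$ be the lifted strip with $\tilde\gamma_n$ as a boundary component and $\tilde\rho_n$ the lift of $\rho_n$ fixing $\tilde\gamma_n$, as in the definition preceding Proposition~\ref{prop_close_to_id}. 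Since $\Pi_n$ is a local isometry, the pull-back metric $\Pi_n^*(\rho_n^*g_n)$ on $\tilde C_n$ equals $\tilde\rho_n^*(\Pi_n^*g_n) = \tilde\rho_n^*\hat g$, where $\hat g$ here denotes the fixed hyperbolic metric on $\HH^2$ (I write $\hat g$ loosely for $\Pi_n^* g_n$, which is the hyperbolic metric independent of $n$). So the statement becomes: $d_{C^\infty}\big(\tilde\rho_n^*\hat g|_{\tilde C_n},\,\hat g|_{\tilde C_n}\big)\to 0$ in the sense of the definition using the isometries $\phi_x$ of $\HH^2$.

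Next I would carry out the estimate at the level of a single base point, translated to $(0,1)$ by an isometry $\phi_x$. Write $\psi_x = \phi_x\circ\tilde\rho_n\circ\phi_x^{-1}$; by Proposition~\ref{prop_close_to_id}, for every $m$ and $\varepsilon$ all derivatives of $\psi_x$ at $(0,1)$ of order $\le m$ are within $\varepsilon$ of those of the identity, once $n$ is large (uniformly in $x\in\tilde C_n$). Now expand, in the fixed coordinates near $(0,1)$,
\[
(\psi_x^*\hat g)_{ij} \;=\; \hat g_{kl}(\psi_x)\,\partial_i\psi_x^k\,\partial_j\psi_x^l .
\]
Since $\psi_x$ is $C^m$-close to the identity at $(0,1)$, in particular $\psi_x(0,1)$ stays in a fixed compact neighborhood of $(0,1)$ where $\hat g$ and all its derivatives up to order $m$ are bounded; combining this with the derivative bounds on $\psi_x^k$ and the chain/product rule shows that all derivatives of $(\psi_x^*\hat g)_{ij} - \hat g_{ij}$ up to order $m$ at $(0,1)$ are $O(\varepsilon)$, with the implied constant depending only on $m$ and the geometry of $\HH^2$ in that fixed neighborhood, not on $n$ or $x$. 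Since $\phi_x$ is an isometry of $\hat g$, we have $\phi_x^*\hat g = \hat g$, and therefore $d_{C^\infty}$ of $\tilde\rho_n^*\hat g$ to $\hat g$ on $\tilde C_n$ tends to $0$; pushing back down by the local isometry $\Pi_n$ gives $d_{C^\infty}(\rho_n^*g_n, g_n)\to 0$.

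The only mildly delicate point — and the one I would be most careful about — is bookkeeping the uniformity: the constants in the product-rule expansion of $(\psi_x^*\hat g)_{ij}$ must be independent of both $n$ and the base point $x$. This is exactly what Proposition~\ref{prop_close_to_id} buys us (the $C^m$-smallness there is uniform over $x\in\tilde C_n$ for $n$ large), together with the fact that the ambient metric $\hat g$ on $\HH^2$ and all its derivatives are controlled on any fixed compact set in the normalized chart around $(0,1)$. Everything else is a routine unwinding of the definition of $d_{C^\infty,n}$ and of pull-back metrics, so the corollary is indeed immediate.
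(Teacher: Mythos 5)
Your argument is correct and is essentially the paper's: the paper states this as an immediate consequence of Proposition~\ref{prop_close_to_id}, and what you have written is precisely the routine unwinding of that immediacy (reduce to the neck, pass to the lifts $\tilde\rho_n$ on $\tilde C_n$, normalize by the isometries $\phi_x$, and apply the chain/product rule to $(\psi_x^*\hat g)_{ij}=\hat g_{kl}(\psi_x)\,\partial_i\psi_x^k\,\partial_j\psi_x^l$, with uniformity in $x$ and $n$ supplied by the proposition). The only cosmetic remark is that bounding order-$m$ derivatives of the pulled-back metric uses derivatives of $\psi_x$ up to order $m+1$, which is harmless since the proposition gives smallness at every order.
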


Let $D\rho_n\colon TS\to TS$, $n\ge1$, be the differential map. We abuse notation and also write $D\rho_n\colon T^1S\to T^1S$ for the induced diffeomorphism of $T^1S$ given by $[v]\mapsto [D\rho_n(v)]$.
Because Sasaki construction only uses the first derivatives of the metric we also have the following.

\begin{coro}\label{cor_close_to_id}
The sequence of Dehn twists $\rho_n\colon (S, g_n)\to (S, g_n)$ constructed above has the following property
$$
d_{C^\infty}(D \rho_n,id_{T^1S})\to 0, n\to\infty.
$$
\end{coro}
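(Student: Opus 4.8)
The plan is to deduce Corollary~\ref{cor_close_to_id} directly from Corollary~\ref{cor_almost_isometry} (equivalently, from Proposition~\ref{prop_close_to_id}), observing that the induced map $D\rho_n\colon T^1S\to T^1S$ only depends on the $1$-jet of $\rho_n$, and that the Sasaki metric $\hat g_n$ on $T^1S$ only depends on the $1$-jet of $g_n$. So the convergence $D\rho_n\to \mathrm{id}_{T^1S}$ should follow from $\rho_n\to\mathrm{id}_S$ in the $C^\infty$ sense of Proposition~\ref{prop_close_to_id} by a purely formal jet-continuity argument, with no new geometric input.

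Concretely, I would work upstairs on $\HH^2$, where the metric is fixed. Lift $\rho_n$ to $\tilde\rho_n\colon\HH^2\to\HH^2$, which is the identity outside $\tilde C_n$ and the identity on $\tilde\gamma_n$, as in the definition of $d_{C^\infty,n}$. The induced diffeomorphism $D\rho_n$ of $T^1S$ lifts to $D\tilde\rho_n\colon T^1\HH^2\to T^1\HH^2$, $[v]\mapsto[D\tilde\rho_n(v)]$; since the projection $\Pi_n$ induces a local isometry $(T^1\HH^2,\hat g)\to(T^1S,\hat g_n)$ (one of the listed properties of the Sasaki metric), it suffices to show $d_{C^\infty}(D\tilde\rho_n|_{T^1\tilde C_n},\mathrm{id})\to 0$ in the sense of the first definition, now with the \emph{fixed} Sasaki metric $\hat g$ on $T^1\HH^2$ and the isometry group of $(T^1\HH^2,\hat g)$, which contains the derivatives of isometries of $\HH^2$. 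The point is that the map $\psi\mapsto D\psi$ sending a diffeomorphism of $\HH^2$ to the induced diffeomorphism of $T^1\HH^2$ is continuous from the $C^{m+1}$ topology on diffeomorphisms to the $C^m$ topology on diffeomorphisms of $T^1\HH^2$: in local coordinates $D\psi$ is an explicit algebraic expression in the first partials of $\psi$, and its $m$-th order derivatives are algebraic expressions in the partials of $\psi$ up to order $m+1$, which vanish when $\psi=\mathrm{id}$. Feeding in Proposition~\ref{prop_close_to_id} (which controls derivatives of $\tilde\rho_n$ of all orders on $\tilde C_n$, after normalizing base points by isometries of $\HH^2$) and using that the base-point normalizing isometries lift to isometries of $(T^1\HH^2,\hat g)$, one gets the claim; projecting back down via the local isometry $\Pi_n$ gives $d_{C^\infty}(D\rho_n,\mathrm{id}_{T^1S})\to 0$.

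The main technical nuisance, rather than a genuine obstacle, is bookkeeping: carefully matching the two slightly different notions of ``$C^\infty$ convergence to the identity'' (the base-point-normalized-by-isometries version used in Proposition~\ref{prop_close_to_id}, and what is meant in the statement of Corollary~\ref{cor_close_to_id}), and checking that normalizing an isometry of $\HH^2$ to carry a given point to $(0,1)$ is compatible, after taking derivatives, with normalizing an isometry of $(T^1\HH^2,\hat g)$ to carry a given unit vector to a fixed base vector — this works because the isometry group of $\HH^2$ acts transitively on $T^1\HH^2$. Once this is set up, the estimate is exactly the continuity of the $1$-jet map composed with the continuity of the Sasaki construction in the metric, both of which are elementary; the author even flags this by the sentence ``Because Sasaki construction only uses the first derivatives of the metric,'' so I expect the actual proof to be a one-line appeal to Proposition~\ref{prop_close_to_id} (or Corollary~\ref{cor_almost_isometry}) plus this observation.
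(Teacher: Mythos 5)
Your proposal is correct and is essentially the paper's argument: the paper gives no separate proof beyond the remark that the Sasaki construction (and the induced map $D\rho_n$) only involves first derivatives, so the corollary follows from Proposition~\ref{prop_close_to_id} by exactly the jet-continuity observation you make. Your write-up merely fills in the lifting-to-$\HH^2$ and base-point-normalization bookkeeping that the paper leaves implicit.
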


\begin{coro}
\label{cor_almost_isometry2}
For Sasaki metrics $\hat g_n$, $n\ge 1$, we have
$$
d_{C^\infty}((D\rho_n)^*\hat g_n,\hat g_n)\to 0, n\to\infty.
$$
\end{coro}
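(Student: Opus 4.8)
The plan is to read this off from Corollary~\ref{cor_close_to_id} (or, via naturality of the Sasaki metric, from Corollary~\ref{cor_almost_isometry}) together with the elementary fact that pulling back a metric by a diffeomorphism is a $C^\infty$-continuous operation. The one subtlety is that the ambient metrics $\hat g_n$ move with $n$; this is dealt with, as everywhere in this section, by lifting to the universal cover, where the relevant metric is $n$-independent and invariant under $\mathrm{Isom}(\HH^2)$.

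Concretely: since $\rho_n=\mathrm{id}$ off the neck $C_n$, the same is true of $D\rho_n$ over $S\setminus C_n$, and there $(D\rho_n)^*\hat g_n=\hat g_n$; so only the (lift of the) neck matters. By the commutative diagram~\eqref{eq_covering}, if $D\tilde\rho_n$ denotes the lift of $D\rho_n$ to $T^1\HH^2$ fixing $\tilde\gamma_n$ (as in the definition preceding Proposition~\ref{prop_close_to_id}), then $D\Pi_n\circ D\tilde\rho_n=D\rho_n\circ D\Pi_n$ with $D\Pi_n$ a local isometry from $(T^1\HH^2,\hat g)$ onto $(T^1S,\hat g_n)$; hence $(D\Pi_n)^*\hat g_n=\hat g$ and $(D\Pi_n)^*\bigl((D\rho_n)^*\hat g_n\bigr)=(D\tilde\rho_n)^*\hat g$, and it suffices to show that $(D\tilde\rho_n)^*\hat g\to\hat g$ in $C^\infty$, uniformly, on the lift of the neck inside $T^1\HH^2$ --- where the target metric $\hat g$ no longer depends on $n$.

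Finally, Corollary~\ref{cor_close_to_id} says, after normalizing base points by (lifts of) isometries of $\HH^2$ under which $\hat g$ is invariant, that $D\tilde\rho_n\to\mathrm{id}$ in $C^\infty$, uniformly. For a diffeomorphism $\Phi$ one has $(\Phi^*\hat g)_x(u,v)=\hat g_{\Phi(x)}(D_x\Phi\,u,D_x\Phi\,v)$, so the assignment $\Phi\mapsto\Phi^*\hat g$ is continuous from $C^{k+1}$ to $C^k$, uniformly, because $\hat g$ and all its derivatives are uniformly bounded in the normalized charts by its $\mathrm{Isom}(\HH^2)$-invariance; plugging in $\Phi=D\tilde\rho_n$ and using $\mathrm{id}^*\hat g=\hat g$ gives $(D\tilde\rho_n)^*\hat g\to\hat g$ in $C^\infty$, which by the reduction above is the assertion. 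Equivalently, one may invoke the naturality identity $(D\rho_n)^*\hat g_n=\widehat{\rho_n^*g_n}$ expressing the pulled-back Sasaki metric as the Sasaki metric of the pulled-back base metric, and then quote Corollary~\ref{cor_almost_isometry} together with the fact that $g\mapsto\hat g$ is a first-order differential operator in $g$, hence $C^\infty$-continuous; the two arguments are the same in substance. The only genuine ``difficulty'' is bookkeeping --- checking that the convergences are uniform over the necks rather than merely pointwise, and keeping track of the finitely many derivatives of $\rho_n$ actually consumed (one to pass from $\rho_n$ to $D\rho_n$, one more to form the pullback metric), so that the $C^\infty$ statement of Proposition~\ref{prop_close_to_id} is comfortably more than enough.
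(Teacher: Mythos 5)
Your argument is correct and is essentially the paper's own reasoning: the paper states this corollary without proof, justifying it by the remark that the Sasaki construction only uses first derivatives of the base metric, which is exactly your naturality/first-order-operator observation combined with Proposition~\ref{prop_close_to_id} and the lift-to-$\HH^2$ normalization already built into the definition of $d_{C^\infty}$. Your more detailed route via $(D\tilde\rho_n)^*\hat g\to\hat g$ on $T^1\HH^2$ is just a careful spelling-out of the same idea, so nothing further is needed.
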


\subsection{Large perturbations preserving a partially hyperbolic structure}
We now introduce a new definition in order to understand under what condition
a large perturbation can preserve the (absolute) partially hyperbolic structure of a diffeomorphism.

 Let $(M,g)$ be a (not necessarily compact) complete Riemannian 3-manifold let $f\colon M\to M$
 be an absolutely partially hyperbolic diffeomorphism for which the splitting $E^{ss}\oplus E^c\oplus E^{uu}$ is orthogonal and the following inequalities hold,
\begin{equation}
\label{eq_ph}
 \|Df|_{E^{ss}(x)} \|_g < \lambda<\lambda'<  \|Df|_{E^c(x)}\|_g <\mu'< \mu < \|Df|_{E^{uu}(x)}\|_g
\end{equation}
where $\lambda'<\lambda<1<\mu'<\mu$ are constants. We say that a sequence of diffeomorphisms $\{h_n\colon M\to M; n\ge 1\}$ is {\it ph-respectful} relative to $(f,g)$ if
\begin{equation}\label{eq_respectful1}
\sup_{x\in M}\angle_g(Dh_nE_f^\sigma(x), E_f^\sigma(h_n(x))\to 0, n\to \infty, \,\,\,\sigma=ss, c, uu
\end{equation}
and
\begin{equation}\label{eq_respectful2}
d_{C^0}(h_n^*g,g)\to 0, n\to\infty
\end{equation}
Note that Equation~(\ref{eq_respectful2})
implies that both $\|Dh_n(x)\|$ and $\cM(Dh_n(x))= \|Dh_n^{-1}(h(x))\|^{-1}$ uniformly converge to $1$ as $n\to +\infty$.

\begin{prop}\label{prop_ph}
Let $(M ,g)$, $f$ and a $ph$-respectful sequence $\{h_n\colon M\to M; n\ge 1\}$ be as above. Then for all sufficiently large $n$ the diffeomorphism $h_n\circ f$ is absolutely partially hyperbolic with respect to $g$.
\end{prop}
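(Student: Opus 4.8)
The plan is to verify the partial hyperbolicity of $h_n\circ f$ using the standard cone-field criterion, exploiting that $Df$ already contracts/expands strongly along the orthogonal splitting $E^{ss}\oplus E^c\oplus E^{uu}$ and that $Dh_n$ is, for large $n$, an arbitrarily small perturbation of an isometry which barely moves the invariant bundles. First I would fix small cone-fields $\mathcal C^{uu}$, $\mathcal C^{cu}$ (around $E^{uu}$ and $E^c\oplus E^{uu}$) and their stable counterparts $\mathcal C^{ss}$, $\mathcal C^{cs}$, chosen narrow enough that on these cones $Df$ still satisfies the strict inequalities~(\ref{eq_ph}) with slightly relaxed constants $\lambda<\lambda_1<1<\mu_1<\mu$; this is possible because the splitting is continuous (indeed, here it is smooth) and the estimates in~(\ref{eq_ph}) are open conditions. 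The key point is then that $D(h_n\circ f) = Dh_n\circ Df$, and I must show $D(h_n\circ f)$ maps the unstable-type cones strictly inside themselves with the right expansion, and similarly maps stable-type cones backward.

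Next I would control $Dh_n$. By~(\ref{eq_respectful1}), $Dh_n$ moves each bundle $E^\sigma_f(x)$ to within angle $\eps_n\to0$ of $E^\sigma_f(h_n(x))$, so for large $n$ it maps each of the four cones into a slightly wider cone of the same type at the image point; more precisely, given any target cone width, for $n$ large the image $Dh_n(\mathcal C^{uu}(x))$ is contained in $\mathcal C^{uu}(h_n(x))$ (this uses that $Dh_n$ is linear on fibers and that a linear map close to one respecting the splitting moves narrow cones to narrow cones). By the remark following~(\ref{eq_respectful2}), both $\|Dh_n\|$ and the conorm $\mathfrak m(Dh_n)$ converge uniformly to $1$, so composing with $Dh_n$ distorts all the norm estimates by a factor as close to $1$ as we wish. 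Combining: choose the cone-fields and the relaxed constants first; then choose $n$ large enough that (a) the cone images land where required and (b) the norm distortion $\|Dh_n\|^{\pm1}$ is close enough to $1$ that $\lambda_1\|Dh_n\| < 1 < \mu_1\,\mathfrak m(Dh_n)$ and that $Df$ composed with $Dh_n$ still contracts $E^{ss}$-cone vectors below $\lambda_1\|Dh_n\|$, etc. Then $D(h_n\circ f)^{-1}$ on the stable cones is handled symmetrically, using that $(h_n\circ f)^{-1}=f^{-1}\circ h_n^{-1}$ and that $\{h_n^{-1}\}$ inherits the ph-respectful property from~(\ref{eq_respectful1})--(\ref{eq_respectful2}).

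Finally, from cone invariance and the uniform norm estimates one reads off the absolute partial hyperbolicity of $h_n\circ f$ with constants, say, $\lambda_1\sup_n\|Dh_n\|$ and $\mu_1\inf_n\mathfrak m(Dh_n)$, which for $n$ large still satisfy $\lambda_2<1<\mu_2$ with a definite gap; the invariant bundles for $h_n\circ f$ are then obtained in the usual way as the maximal invariant sub-bundles inside the cone-fields (using that the cone-fields are strictly invariant). I expect the main technical obstacle to be purely bookkeeping: one must fix the order of quantifiers correctly — cone widths and relaxed exponents chosen \emph{before} $n$ — so that the single hypothesis "$h_n$ is ph-respectful" genuinely suffices, and one must make sure the center cones $\mathcal C^{cs}$, $\mathcal C^{cu}$ are treated consistently (the center direction is not contracted or expanded, so the relevant condition is only cone invariance plus the sandwiched norm bounds, which again survive the small perturbation $Dh_n$). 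No new idea beyond the cone criterion is needed; the content is that "ph-respectful" was defined precisely to make this perturbation argument go through even though $h_n$ itself is large.
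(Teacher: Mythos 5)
Your proposal is correct and follows essentially the same route as the paper: strict invariance of an unstable cone-field under $Df$ (with a definite angle gap coming from~(\ref{eq_ph}) and orthogonality), then using~(\ref{eq_respectful1}) to see that $Dh_n$ barely rotates the cones and~(\ref{eq_respectful2}) to see it barely distorts norms, with the stable cones handled by reversing time. The paper simply fixes the concrete cones $\cC^{uu}_{\pi/4}\supset\cC^{uu}_\alpha\supset Df(\cC^{uu}_{\pi/4})$ rather than your four-cone bookkeeping, but the quantifier order and the mechanism are identical.
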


\begin{proof}[Proof of Proposition~\ref{prop_ph}]
For $\alpha\in(0,\pi/2)$ define the following unstable cone field on $(M, g)$
$$
\cC_\alpha^{uu}=\{v\in T_xM: \angle (v, E^{uu})<\alpha\}.
$$
Then, by assumption~(\ref{eq_ph}) there exists $\alpha\in(0,\pi/4)$ such that
\begin{equation}\label{eq_cone_invariance}
Df(\cC_{\pi/4}^{uu})\subset\cC_\alpha^{uu}\subset \cC_{\pi/4}^{uu}
\end{equation}
Now pick any non-zero vector $v\in Df(\cC_{\pi/4}^{uu})$. Using~(\ref{eq_respectful2}) and~(\ref{eq_respectful1}), we have
\begin{multline*}
\angle(Dh_nv, E^{uu})\le
\\ \angle(Dh_nv, Dh_n(E^{uu}))+\angle(Dh_n(E^{uu}),E^{uu})\to\angle(v, E^{uu}), n\to\infty
\end{multline*}
uniformly in $v\in Df(\cC_{\pi/4}^{uu})$. It follows that for all sufficiently large $n$
$$
Dh_n(\cC_\alpha^{uu})\subset\cC_{\pi/4}^{uu}
$$
and, by combining with~(\ref{eq_cone_invariance}) we obtain
$$
D(h_n\circ f)(\cC_{\pi/4}^{uu})\subset\cC_{\pi/4}^{uu}.
$$
Also using~(\ref{eq_ph}) and~(\ref{eq_respectful2}) on can check that there exist constants $\nu>\nu'>1$ such that for all sufficiently large $n$
$$
\|D(h_n\circ f)v\|_g\ge\nu\|v\|_g\;\;\;\mbox{if}\;\;v\in Df(\cC_{\pi/4}^{uu})
$$
and
$$
\|D(h_n\circ f)v\|_g\le\nu'\|v\|_g\;\;\;\mbox{if}\;\;v\notin \cC_{\pi/4}^{uu}.
$$
Because we assume that the partially hyperbolic splitting of $f$ is orthogonal,  we can check that $Df$ satisfies such inequalities. For large $n$,
diffeomorphism $h_n$ almost does not affect the
norms of vectors and we obtain the posited inequalities for $D(h_n\circ f)$.

By reversing the time we can obtain analogous properties of the (analogously defined)
stable cone field $\cC_{\pi/4}^{ss}$ hold with respect to $(h_n\circ f)^{-1}$.
It is well-known~(see, \eg~\cite{HasPes}) that existence of such cone fields imply absolute partial hyperbolicity.
\end{proof}

\subsection{The basic example}  We are ready to present the basic version of our example.

\begin{theo} \label{theorem_basic}
Let $S$ be a surface of genus 2 or higher, let $f_n\colon T^1S\to T^1S$, $n\ge 1$ be the time-one maps of
the geodesic flows and let $D\rho_n\colon T^1S\to T^1S$, $n\ge 1$, be the diffeomorphism induced by the Dehn
twists $\rho_n$ as described above. Then for all sufficiently large $n$ the diffeomorphisms $D\rho_n\circ f_n$ are
absolutely partially hyperbolic. Furthermore, these diffeomorphisms and their finite iterates are not homotopic to identity.
\end{theo}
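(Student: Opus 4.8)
The plan is to deduce the absolute partial hyperbolicity of $D\rho_n\circ f_n$ from Proposition~\ref{prop_ph} applied on the universal cover $(T^1\HH^2,\hat g)$, where both the metric and the dynamics are independent of $n$, and to obtain the homotopical statement from the structure of $T^1S$ as a circle bundle over $S$ together with faithfulness of the mapping class group action on $\pi_1(S)$.

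For partial hyperbolicity I would first record that the time-one map $f$ of the geodesic flow on $T^1\HH^2$ is absolutely partially hyperbolic for $\hat g$: on $E^c_f$ (the flow direction) $Df$ is an isometry, and by the third listed property of the Sasaki metric the splitting $E^{ss}_f\oplus E^c_f\oplus E^{uu}_f$ is $\hat g$-orthogonal, so~\eqref{eq_ph} holds for suitable constants. Moreover $E^\sigma_f$ is smooth and invariant under every isometry of $(T^1\HH^2,\hat g)$ arising from an isometry of $\HH^2$ (these commute with the geodesic flow); in particular it is invariant under the deck group of $\hat\Pi_n:=D\Pi_n$, so it descends to the splitting of $f_n$ on $(T^1S,\hat g_n)$, which is therefore orthogonal with the same hyperbolicity constants. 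Let $\widetilde{D\rho_n}$ be the lift of $D\rho_n$ to $T^1\HH^2$ normalised as in the definitions preceding Proposition~\ref{prop_close_to_id}. I claim $\{\widetilde{D\rho_n}\}$ is $ph$-respectful relative to $(f,\hat g)$. Since $\hat\Pi_n$ is a surjective local isometry intertwining $\widetilde{D\rho_n}$ with $D\rho_n$ and $E^\sigma_f$ with $E^\sigma_{f_n}$, the suprema in~\eqref{eq_respectful1} and~\eqref{eq_respectful2} over $x\in T^1\HH^2$ are literally equal to the corresponding suprema over the \emph{compact} manifold $(T^1S,\hat g_n)$; on $T^1S$, condition~\eqref{eq_respectful2} is Corollary~\ref{cor_almost_isometry2}, while~\eqref{eq_respectful1} follows from Corollary~\ref{cor_close_to_id} (the $C^1$-smallness of $D\rho_n-\mathrm{id}$ gives $\angle(D(D\rho_n)E^\sigma_{f_n}(z),E^\sigma_{f_n}(z))\to0$, and continuity of $E^\sigma_{f_n}$ on the compact $T^1S$ together with $d(z,D\rho_n z)\to0$ gives $\angle(E^\sigma_{f_n}(z),E^\sigma_{f_n}(D\rho_n z))\to0$, both uniformly in $z$). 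Proposition~\ref{prop_ph} then shows that $\widetilde{D\rho_n}\circ f$ is absolutely partially hyperbolic on $(T^1\HH^2,\hat g)$ for all large $n$; since this map is a lift of $D\rho_n\circ f_n$ and the stable/unstable cone fields produced in the proof of Proposition~\ref{prop_ph} are built from $\hat g$ and $E^\sigma_f$, hence deck-invariant, they descend to $(T^1S,\hat g_n)$ together with the pointwise cone-invariance and expansion/contraction estimates, and compactness of $T^1S$ finishes this part.

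For the homotopical statement, since $f_n$ is the time-one map of the geodesic flow $\cG_n$, the path $t\mapsto\cG_n^t$ is a homotopy from $\mathrm{id}_{T^1S}$ to $f_n$. As composition is well defined on homotopy classes of self-maps of $T^1S$, $[D\rho_n\circ f_n]=[D\rho_n]\,[f_n]=[D\rho_n]$, and therefore $(D\rho_n\circ f_n)^k$ is homotopic to $(D\rho_n)^k$ for every $k\ge1$; so it suffices to show $(D\rho_n)^k$ is not homotopic to the identity when $k\ge1$. Let $p\colon T^1S\to S$ be the bundle projection. As $D\rho_n$ is fibrewise the differential of $\rho_n$, it is a bundle map covering $\rho_n$, i.e. $p\circ D\rho_n=\rho_n\circ p$. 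Consider the exact sequence $1\to\ZZ\to\pi_1(T^1S)\xrightarrow{p_*}\pi_1(S)\to1$ coming from the connected circle fibre ($p_*$ is onto). If $(D\rho_n)^k$ were homotopic to $\mathrm{id}_{T^1S}$, then $(D\rho_n)^k_*$ would be an inner automorphism of $\pi_1(T^1S)$; pushing forward by $p_*$ and using $p_*\circ(D\rho_n)_*=(\rho_n)_*\circ p_*$, the automorphism $(\rho_n)^k_*$ of $\pi_1(S)$ would be inner, i.e. trivial in $\mathrm{Out}(\pi_1(S))$. But $\rho_n$ is a Dehn twist along an essential simple closed curve of a surface of genus $\ge2$, hence of infinite order in $\mathrm{MCG}(S)$, and $\mathrm{MCG}(S)$ injects into $\mathrm{Out}(\pi_1(S))$ by the Dehn--Nielsen--Baer theorem; so $(\rho_n)^k_*$ is nontrivial in $\mathrm{Out}(\pi_1(S))$ for all $k\ne0$, a contradiction.

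The step I expect to require the most care is seeing that Proposition~\ref{prop_ph} really applies: it is stated for a single fixed Riemannian manifold, whereas $f_n$ and $D\rho_n$ naturally live on $T^1S$ with the varying metrics $\hat g_n$. The resolution is to pass to $(T^1\HH^2,\hat g)$, where $f$ is $n$-independent, and to exploit that $ph$-respectfulness is expressed by suprema of \emph{pointwise} quantities, so it transfers unchanged across the local isometry $\hat\Pi_n$ from the compact manifold $(T^1S,\hat g_n)$, on which Corollaries~\ref{cor_close_to_id} and~\ref{cor_almost_isometry2} supply it. Everything else is either formal (the homotopy-class computation, the descent via deck-invariant cone fields) or standard surface topology (infinite order of Dehn twists and Dehn--Nielsen--Baer).
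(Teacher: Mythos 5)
Your route is the same as the paper's: lift to $(T^1\HH^2,\hat g)$, where $f$ and the Sasaki metric do not depend on $n$, check that the lifts of $D\rho_n$ form a ph-respectful sequence (this is exactly the paper's Proposition~\ref{prop_respectful}, deduced from Corollaries~\ref{cor_close_to_id} and~\ref{cor_almost_isometry2}), apply Proposition~\ref{prop_ph}, and descend using deck-invariance of the cone fields; for the homotopical statement, use that $f_n$ is isotopic to the identity, that $\pi_1(T^1S)\to\pi_1(S)$ is onto and intertwines $(D\rho_n)_*$ with $(\rho_n)_*$, and that a Dehn twist along an essential curve has infinite order in the mapping class group of $S$. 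Your Dehn--Nielsen--Baer argument is a spelled-out version of the paper's two-line diagram argument, and the transfer of the suprema in \eqref{eq_respectful1}--\eqref{eq_respectful2} through the local isometries is exactly how the paper argues.

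There is, however, one step where your justification as written does not suffice, and it is precisely the point where the paper inserts a specific observation. To get \eqref{eq_respectful1} you invoke ``continuity of $E^\sigma_{f_n}$ on the compact $T^1S$'' together with $d(z,D\rho_n z)\to0$ to conclude that $\angle\bigl(E^\sigma_{f_n}(z),E^\sigma_{f_n}(D\rho_n z)\bigr)\to0$ uniformly in $z$. For each fixed $n$, compactness does give a modulus of continuity $\omega_n$ of $E^\sigma_{f_n}$, but both the bundle and the displacement vary with $n$, so all you obtain is a bound of the form $\omega_n(\delta_n)$ with $\delta_n\to0$; this is inconclusive unless the moduli $\omega_n$ are controlled uniformly in $n$, which is not automatic since the metrics $\hat g_n$ degenerate in the neck (the length $\ell_n$ tends to $0$), and the neck is exactly where $D\rho_n$ is supported. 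The missing ingredient is the paper's remark in the proof of Proposition~\ref{prop_respectful}: because all the surfaces $(S,g_n)$ have the same universal cover, each splitting $E^\sigma_{f_n}$ is the projection of the single splitting $E^\sigma_f$ on $T^1\HH^2$, which is invariant under the isometries of $(T^1\HH^2,\hat g)$ induced from isometries of $\HH^2$ (a transitive action) and therefore admits an $n$-independent modulus of continuity; lifting short paths through the local isometries transfers this uniform-in-$n$ continuity to the $E^\sigma_{f_n}$ (equivalently, one can verify \eqref{eq_respectful1} directly upstairs for the fixed bundle $E^\sigma_f$). You already noted the descent of $E^\sigma_f$ to $E^\sigma_{f_n}$ earlier in your argument; you need to use it again at this point. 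With that repair your proof coincides with the paper's.
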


Recall that the Anosov splitting for the geodesic flow on $T^1\HH^2$ is orthogonal with respect to the Sasaki metric.

\begin{prop}\label{prop_respectful}
Let $f\colon T^1\HH^2\to T^1\HH^2$ be the time-one map of the geodesic
flow on the hyperbolic plane and let $\hat g$ be the Sasaki metric on $\HH^2$. For each $n\ge 1$ denote by
$D\tilde\rho_n\colon T^1\HH^2\to T^1\HH^2$ a lift of $D\rho_n\colon T^1S\to T^1S$ with respect the locally isometric cover~(\ref{eq_covering}).
Then the sequence $\{ D\tilde\rho_n\colon T^1\HH^2\to T^1\HH^2; n\ge 1\}$ is ph-respectful relative to $(f,\hat g)$.
\end{prop}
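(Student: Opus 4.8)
The plan is to verify the two defining conditions of ph-respectfulness for the lifted Dehn twists $D\tilde\rho_n$ relative to $(f,\hat g)$, namely the angular condition~(\ref{eq_respectful1}) and the metric condition~(\ref{eq_respectful2}). The metric condition is immediate: Corollary~\ref{cor_almost_isometry2} gives $d_{C^\infty}((D\rho_n)^*\hat g_n,\hat g_n)\to 0$ on $T^1S$, and since the vertical arrows in diagram~(\ref{eq_covering}) are local isometries, the corresponding statement $d_{C^\infty}((D\tilde\rho_n)^*\hat g,\hat g)\to 0$ lifts to $T^1\HH^2$; in particular the weaker $C^0$ statement~(\ref{eq_respectful2}) holds. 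So the real content is the angular condition. Here I would use Corollary~\ref{cor_close_to_id}, which says $d_{C^\infty}(D\rho_n,id_{T^1S})\to 0$, or rather its lifted version: the lifts $D\tilde\rho_n$ are close to the identity in the sense of the first definition (using the isometries $\phi_x$ of $\HH^2$ to recentre), because $\rho_n$ is supported in the neck $C_n$ and the lifted twist function $\ell_n\rho$ has all derivatives tending to zero after the rescaling in the proof of Proposition~\ref{prop_close_to_id}.

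Granting that, the argument for~(\ref{eq_respectful1}) runs as follows. Fix $\sigma\in\{ss,c,uu\}$. The subbundle $E^\sigma_f$ on $T^1\HH^2$ is invariant under derivatives of isometries of $\HH^2$ (since $f$ commutes with them and the Anosov splitting is canonical), and it is a smooth — indeed, after pulling back by the isometry $\phi_x$ to the basepoint $(0,1)$, a fixed — subbundle in suitable coordinates. Pick any $x\in T^1\HH^2$ and let $\phi_x$ be the recentring isometry from the definition of $C^\infty$-closeness to the identity, so that $\psi_n:=\phi_{\rho_n(x)}\circ D\tilde\rho_n\circ\phi_x^{-1}$ has all derivatives up to order one at $(0,1)$ within $\varepsilon$ of those of the identity, uniformly in $x$, for $n$ large. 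Because $D\phi_x$ and $D\phi_{\rho_n(x)}$ preserve the relevant bundle, comparing $D\tilde\rho_n(E^\sigma_f(x))$ with $E^\sigma_f(\tilde\rho_n(x))$ is, after conjugating by these isometries, the same as comparing $D\psi_n(E^\sigma_f(\tilde x))$ with $E^\sigma_f(\tilde x)$ at the fixed point $\tilde x=(0,1)$, where $E^\sigma_f$ is a fixed subspace. Since $D\psi_n$ at that point is within $\varepsilon$ of the identity in operator norm, the image of the fixed line $E^\sigma_f(\tilde x)$ makes an angle $O(\varepsilon)$ with $E^\sigma_f(\tilde x)$. As the Sasaki metric $\hat g$ is isometry-invariant, angles are unchanged under the conjugation, so $\angle_{\hat g}(D\tilde\rho_n E^\sigma_f(x), E^\sigma_f(\tilde\rho_n(x)))=O(\varepsilon)$ uniformly in $x$, which is exactly~(\ref{eq_respectful1}).

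The main obstacle — or rather the main point requiring care — is the bookkeeping with the recentring isometries $\phi_x$: one must make sure that the "closeness to the identity" in the sense of the first definition genuinely transfers to a uniform bound on $\angle_{\hat g}(D\tilde\rho_n E^\sigma_f(x), E^\sigma_f(\tilde\rho_n(x)))$, and in particular that the two isometries $\phi_x$ and $\phi_{\rho_n(x)}$ used at the source and target can be chosen compatibly (both send their respective points to $(0,1)$ and both preserve the canonical Anosov splitting, which holds because the splitting is invariant under all isometries). The use of isometry-invariance of $\hat g$ is what makes the angle comparison legitimate after conjugation. A secondary routine point is that one only needs first derivatives for the angular estimate and zeroth-order ($C^0$) control of the pulled-back metric, both of which are weaker than what Corollaries~\ref{cor_close_to_id} and~\ref{cor_almost_isometry2} provide, so there is slack. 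I would state the proof compactly, invoking Corollaries~\ref{cor_close_to_id} and~\ref{cor_almost_isometry2} together with the isometry-invariance of $\hat g$ and of the Anosov splitting, and spend the few lines of detail on the conjugation-by-$\phi_x$ argument for~(\ref{eq_respectful1}).
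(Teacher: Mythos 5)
Your proposal is correct and follows essentially the same route as the paper: both verify (\ref{eq_respectful2}) via Corollary~\ref{cor_almost_isometry2} and the locally isometric covers, and obtain (\ref{eq_respectful1}) from Corollary~\ref{cor_close_to_id} together with the isometry-invariance of the Sasaki metric and of the Anosov splitting on $T^1\HH^2$. The only cosmetic difference is that the paper first establishes both conditions downstairs on $(T^1S,\hat g_n)$ (using that $D\rho_n$ is the identity off the neck and that the splittings are uniformly continuous in $n$) and then lifts, whereas you carry out the angle estimate directly upstairs with the recentring isometries $\phi_x$; the ingredients are the same.
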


We proceed with the proof of Theorem~\ref{theorem_basic} assuming the above proposition.

\begin{proof}[Proof of Theorem~\ref{theorem_basic}]
The lift of $D\rho_n\circ f_n$ with respect to the locally isometric cover of~(\ref{eq_covering}) is $D\tilde\rho_n\circ f$. Therefore it suffices to check absolute partial hyperbolicity of $D\tilde\rho_n\circ f$ with respect to $\hat g$. For sufficiently large $n$ this follows by combining Propositions~\ref{prop_respectful} and~\ref{prop_ph}.

Notice that diffeomorphism $f_n$ is clearly isotopic to the identity while  $\rho_n\colon S \to S$ is well known to be of infinite order in the mapping class group of $S$ (see \eg~\cite{FLP}).
Because the horizontal homomorphisms in the commutative diagram
\begin{equation*}
\xymatrix{
 \pi_1(T^1S) \ar[r]\ar_{\pi_1(D\rho_n)}[d] &  \pi_1(S)\ar^{\pi_1(\rho_n)}[d]\\
  \pi_1(T^1S)\ar[r] & \pi_1(S)
}
\end{equation*}
are epimorphisms we also have that $D\rho_n\colon S \to S$ has infinite order in the mapping class group.
Therefore $D\rho_n\circ f_n$ is of infinite order in the mapping class group of $T^1S$.
\end{proof}

\begin{proof}[Proof of Proposition~\ref{prop_respectful}]
Our strategy is to first establish properties~(\ref{eq_respectful1}) and~(\ref{eq_respectful2})
for the sequence $\{D\rho_n\colon (T^1S,\hat g_n)\to (T^1S,\hat g_n); n\ge 1\}$
and then deduce that~(\ref{eq_respectful1}) and~(\ref{eq_respectful2}) also hold for the lifts to $T^1\HH^2$.

Recall that by Corollary~\ref{cor_almost_isometry2} we already have
$$
d_{C^\infty}((D\rho_n)^*\hat g_n,\hat g_n)\to 0, n\to\infty.
$$
Because we take the lifts with respect to  locally isometric covers $(T^1\HH^2,\hat g)\to (T^1S,\hat g_n)$, it follows that
$$
d_{C^\infty}((D\tilde\rho_n)^*\hat g,\hat g)\to 0, n\to\infty.
$$

 On a hyperbolic surface, the Anosov splitting can be read off locally from the metric.
 Therefore $D\rho_n$ actually preserves the $Df_n$-invariant splitting $TT^1S=E_{f_n}^{ss}\oplus E_{f_n}^c\oplus E_{f_n}^{uu}$ outside  the neck.
 Further, on the neck, because all surfaces have the same universal cover, the splittings $TT^1S=E_{f_n}^{ss}\oplus E_{f_n}^c\oplus E_{f_n}^{uu}$ are
 uniformly continuous in $n$. Hence, Corollary~\ref{cor_close_to_id} yields
 $$
 \sup_{x\in M}\angle(D(D\rho_n)E_{f_n}^\sigma(x), E_{f_n}^\sigma(D\rho_n(x))\to 0, n\to \infty, \,\,\,\sigma=ss, c, uu.
 $$
 And because the splittings for $f_n$ lift to the splitting for $f$ we obtain
 $$
 \sup_{x\in M}\angle(D(D\tilde\rho_n)E_{f}^\sigma(x), E_{f}^\sigma(D\tilde\rho_n(x))\to 0, n\to \infty, \,\,\,\sigma=ss, c, uu,
 $$
which concludes the proof.
 \end{proof}

\subsection{The volume preserving modification}
\label{section_vp}
Denote by $m_n$ the Liouville volume on $(T^1S,g_n)$.

\begin{theo} \label{theorem_basic_vol}
Let $S$ be a surface of genus 2 or higher, let $f_n\colon T^1S\to T^1S$, $n\ge 1$ be the time-one maps of the geodesic flows and let $\rho_n\colon S\to S$, $n\ge 1$, be the Dehn twists as described earlier.  Then there exists a sequence of diffeomorphisms $\{h_n\colon T^1S\to T^1S; n\ge 1\}$ (which fiber over $\rho_n$) such that diffeomorphisms $h_n\circ f_n$ preserve $m_n$ and for all sufficiently large $n$ diffeomorphisms $h_n\circ f_n$ are absolutely partially hyperbolic. Furthermore, these diffeomorphisms and their finite iterates are not homotopic to identity.
\end{theo}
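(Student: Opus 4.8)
The plan is to upgrade Theorem~\ref{theorem_basic} to a volume-preserving statement by modifying the Dehn-twist fibered map so that the composition with $f_n$ exactly preserves the Liouville volume $m_n$, while remaining $ph$-respectful relative to $(f,\hat g)$ so that Proposition~\ref{prop_ph} still applies. First I would observe that $f_n$ already preserves $m_n$ (the geodesic flow is Liouville-preserving), so $h_n\circ f_n$ preserves $m_n$ if and only if $h_n$ preserves $m_n$. Since $D\rho_n$ is $C^\infty$-close to the identity on $T^1S$ by Corollary~\ref{cor_close_to_id}, the pushforward $(D\rho_n)_*m_n$ is a volume form $C^\infty$-close to $m_n$; write $(D\rho_n)_*m_n = \psi_n\, m_n$ with $\psi_n$ a positive function $C^\infty$-converging to $1$, supported (as a perturbation of the constant $1$) on the neck region of $T^1S$. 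The goal is then to correct $D\rho_n$ by post-composing with a diffeomorphism $\theta_n$ supported near the neck such that $(\theta_n)_*\big((D\rho_n)_*m_n\big) = m_n$, i.e. $\theta_n$ solves a prescribed-Jacobian equation, and then set $h_n = \theta_n\circ D\rho_n$.

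The key step is to produce $\theta_n$ via a Moser-type argument done \emph{uniformly in $n$}. Concretely, I would interpolate $m_t = (1-t)\,\psi_n m_n + t\,m_n$ for $t\in[0,1]$, all cohomologous volume forms agreeing outside the neck, and solve $\iota_{X_t}m_t = \beta_n$ where $d\beta_n = m_n - \psi_n m_n$ and $\beta_n$ is chosen supported in the neck with $C^\infty$-norm controlled by $\|\psi_n-1\|_{C^\infty}\to 0$; the time-one flow of $X_t$ is $\theta_n$. Because $\psi_n\to 1$ in $C^\infty$ and the geometry of the neck, read in the rescaled coordinates of~(\ref{eq_hyp_metric}), is uniformly bounded in $n$ (this is exactly the phenomenon already exploited in the proof of Proposition~\ref{prop_close_to_id}), one gets $d_{C^\infty,n}(\theta_n,\mathrm{id})\to 0$. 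Hence $h_n=\theta_n\circ D\rho_n$ also satisfies $d_{C^\infty,n}(h_n,\mathrm{id})\to 0$, so lifting to $T^1\HH^2$ exactly as in Proposition~\ref{prop_respectful} shows the lifts $\tilde h_n$ are $ph$-respectful relative to $(f,\hat g)$, and Proposition~\ref{prop_ph} gives absolute partial hyperbolicity of $h_n\circ f_n$ for $n$ large. One caveat to handle: $\theta_n$ should be chosen to fiber over a map of $S$ (or at least to be of the stated form), which can be arranged since $\psi_n$ and the correction can be taken equivariant under the circle action on the neck; alternatively one simply notes $h_n$ need only fiber over $\rho_n$ up to the correction, and one tracks that $\theta_n$ projects to a volume-correcting diffeomorphism of $(S,g_n)$ supported on $C_n$ obtained by the same Moser argument on the surface.

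The homotopy statement is inherited verbatim: $\theta_n$ is isotopic to the identity (being the time-one map of a flow), $f_n$ is isotopic to the identity, and $D\rho_n$ is of infinite order in the mapping class group of $T^1S$ as shown in the proof of Theorem~\ref{theorem_basic}; hence $h_n\circ f_n = \theta_n\circ D\rho_n\circ f_n$ is of infinite order in the mapping class group and no iterate is homotopic to the identity.

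I expect the main obstacle to be the \emph{uniformity} in $n$ of the Moser correction: one must check that solving $d\beta_n = m_n-\psi_n m_n$ with support in the neck and with $C^\infty$-bounds that vanish as $n\to\infty$ is possible despite the metric $\hat g_n$ degenerating. The resolution, as above, is to work in the rescaled coordinates $(\bar x,\bar y)$ where the neck metric~(\ref{eq_hyp_metric}) and hence the relevant elliptic estimates are $n$-independent, so that $\|\psi_n-1\|_{C^\infty}\to 0$ translates into $\|\beta_n\|_{C^\infty}\to 0$ and then into $d_{C^\infty,n}(\theta_n,\mathrm{id})\to 0$. A secondary, purely bookkeeping, point is making sure all perturbations are supported strictly inside $C_n$ so that $h_n$ equals the identity off the neck and the global gluing is automatic.
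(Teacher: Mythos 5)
Your proposal is correct in outline and reaches the theorem by the same overall skeleton as the paper (keep $f_n$, replace the twist map by a volume-preserving fibered map, verify that its lifts are ph-respectful relative to $(f,\hat g)$, apply Proposition~\ref{prop_ph}, and inherit the homotopy statement from the infinite order of $\rho_n$), but the key step -- making the twist volume preserving -- is done by a genuinely different and heavier route. The paper does not run a Moser/prescribed-Jacobian argument at all: it computes the Liouville density explicitly in the rescaled neck coordinates $(\bar x,\bar y,\alpha)$, namely formula~(\ref{eq_liouville}), observes that this density is independent of $\bar y$, and therefore simply \emph{defines} $h_n$ on $T^1C_n$ by the shear $(\bar x,\bar y,\alpha)\mapsto(\bar x,\bar y+\ell_n\rho(\bar x),\alpha)$, which fibers over $\rho_n$, is the identity off the neck, and preserves $m_n$ exactly by inspection. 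Partial hyperbolicity is then obtained by writing $h_n=h_n'\circ D\rho_n$ with $h_n'(\bar x,\bar y,\alpha)=(\bar x,\bar y,D_{\bar x}^{-1}(\alpha))$ and noting $d_{C^\infty}(h_n',\mathrm{id})\to0$ since $\ell_n\to0$, so the lifts remain ph-respectful. In your language: the Jacobian defect of $D\rho_n$ with respect to $m_n$ is concentrated entirely in the fiber coordinate $\alpha$ and is $\bar y$-independent (the base map $\rho_n$ is already $g_n$-area preserving, since the area form $\cosh(\bar x)\,d\bar x\,d\bar y$ does not depend on $\bar y$), so your correction $\theta_n$ can be taken to be exactly the explicit fiberwise map $h_n'$, no elliptic theory needed. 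Your route works in principle, but two of the points you flag are precisely where care is required: (i) the uniformity in $n$ of the Moser estimates is not automatic from ``the rescaled metric~(\ref{eq_hyp_metric}) is $n$-independent,'' because the rescaled circle has circumference $\ell_n\to0$, so the domain degenerates and one must phrase the estimates via lifts to the fixed strip (as in the paper's definition of $d_{C^\infty,n}$); and (ii) the fibering requirement forces the correction to move only $\alpha$, which you can arrange but which, once made explicit, collapses your construction to the paper's formula. The explicit computation thus buys you volume preservation on the nose with no analytic machinery, whereas your approach would generalize more readily to situations where no such closed-form density is available.
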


\begin{proof}
 Recall that the neck $C_n\subset (S,g_n)$ can be equipped with coordinates $(\bar x,\bar y)\in [0,1]\times S^1$ so that the expression for $g_n$~(\ref{eq_hyp_metric}) is independent of $n$. We identify $T^1C_n$ with $[0,1]\times S^1\times S^1$ and will use $\alpha$ for the last (angular) coordinate with the agreement that  vectors with $\alpha=0$ are tangent to geodesics $y=\textup{const}$.

 Locally the Liouville volume $m_n$ is the product of the Riemannian volume on $(S,g_n)$ and the angular measure on the tangent circle. A direct calculation in $(\bar x, \bar y, \alpha)$ coordinates yields the following formula for $m_n$ on $T^1C_n$
 \begin{equation}
 \label{eq_liouville}
 dm_n=\cosh(\bar x)(\cosh^{-1}(\bar x)\cos^2(\alpha)+\cosh(\bar x)\sin^2(\alpha))d\bar x d\bar y d\alpha
 \end{equation}

Now define $h_n\colon (T^1S,\hat g_n)\to (T^1S, \hat g_n)$ by
\begin{equation*}
 \label{eq_rho}
h_n(v)=
\begin{cases}
v, \mbox{if}\,\, v\notin T^1C_n \\
(\bar x, \bar y+\ell_n\rho(\bar x), \alpha), \mbox{if}\,\, v=(\bar x,\bar y,\alpha)\in T^1C_n
\end{cases}
\end{equation*}
Note that diffeomorphisms $h_n$, indeed, fiber over the Dehn twists $\rho_n$, and hence, $h_n\circ f_n$ and its finite iterates are not homotopic to identity for all $n\ge 1$.

The geodesic flows leave corresponding Liouville measures invariant. Thus, to show that $h_n\circ f_n$ preserves $m_n$ we have to check that $h_n$ preserves $m_n$. But $h_n$ preserves $\bar x$ and $\alpha$ coordinates and the expression for the density of $m_n$~(\ref{eq_liouville}) does not depend on $\bar y$. Hence, indeed, $h_n^*m_n=m_n$.

It remains to establish absolute partial hyperbolicity of $h_n\circ f_n$ for sufficiently large $n$. Just as in the proof of Theorem~\ref{theorem_basic}, we will check that a sequence of lifts $\{\tilde h_n\colon T^1\HH^2\to T^1\HH^2; n\ge 1\}$ (taken with respect to locally isometric covers) is a ph-respectful sequence relative to $(f,\hat g)$.

The restriction of the diffeomorphism $D\rho_n\colon T^1S\to T^1S$ to $T^1C_n$ is given by the formula
$$
D\rho_n(\bar x,\bar y, \alpha)=(\bar x, \bar y+\ell_n \rho(\bar x), D_{\bar x}(\alpha)),
$$
where $D_{\bar x}\colon S^1\to S^1$ is induced by the matrix
$$\left(\begin{matrix} 1&0 \\ \ell_n\rho'(\bar x)&1 \end{matrix}\right).$$
Therefore we can decompose $h_n$ as
$$
h_n=h_n'\circ D\rho_n,
$$
where the restriction of $h_n'$ to $T^1C_n$ is given by
$$
h_n'(\bar x,\bar y, \alpha)=(\bar x,\bar y, D^{-1}_{\bar x}(\alpha)).
$$
Since $\ell_n\to 0$ as $n\to\infty$ we have $d_{C^\infty}(h_n', id_{T^1S})\to 0$ as $n\to\infty$. Now we consider lifts $\tilde h_n'\colon T^1\HH^2\to T^1\HH^2$ which fiber over $id_{\HH^2}$ so that we also have $d_{C^\infty}(\tilde h_n', id_{T^1\HH^2})\to 0$.
It is easy to see, that this last conclusion together with ph-respectful properties of $\{D\tilde \rho_n; n\ge 1\}$ implies the sequence $\{\tilde h_n=\tilde h_n'\circ D\tilde \rho_n; n\ge 1\}$ is also ph-respectful relative to $(f,\hat g)$.
\end{proof}

\subsection{Stable ergodicity and robust transitivity}\label{sect:RTSE-T1S}

Here we establish an addendum to Theorem~\ref{theorem_basic_vol}, which finally completes our first proof of Theorem~\ref{t.main}.
\begin{add}
 The volume preserving absolutely partially hyperbolic diffeomorphism $F_0$ constructed in the proof of Theorem~\ref{theorem_basic_vol} admits a stably ergodic and robustly transitive perturbation $F$.
\end{add}

We do not know whether $F_0$ is ergodic or not with respect to the volume $m$ but it is possible to consider a small $C^1$ volume preserving perturbation $F_1$ of $F_0$ to make it stably ergodic (see \cite{BMVW}, in this context it is also possible to make a $C^\infty$-small perturbation to obtain ergodicity, \cite{HHU-densityergodicity}).

This means that there exists a $C^1$-neighborhood $\cU_1$ of $F_1$ such that for every $F \in \cU_1$ which preserves $m$ and is of class $C^{2}$ we have that $F$ is ergodic\footnote{The fact that $F$ has to be $C^2$ is for technical reasons which we shall not explain here.}. Moreover, every $F \in \cU_1$ is accessible (see \cite{BMVW}). Therefore, if $F$ preserves $m$ then it is transitive (even if it is not $C^2$).

However, in principle, it is possible that a dissipative perturbation of $F_1$ is not transitive. To obtain robust transitivity we shall perform yet another (volume preserving) perturbation of $F_1$ within $\cU_1$ to obtain both properties at the same time.

We have the following.

\begin{prop}\label{prop:volumepresperturbationRT} Let $f\colon M \to M$ be a volume preserving partially hyperbolic diffeomorphism of a 3-dimensional manifold such that it has a normally hyperbolic circle leaf whose dynamics is conjugate to a rotation. Then, there exists an arbitrarily $C^1$-small volume preserving perturbation of $f$ which makes it stably ergodic and robustly transitive.
\end{prop}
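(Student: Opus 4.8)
The plan is to exploit the normally hyperbolic circle $C$ in order to implant near it the two classical mechanisms that we need: stable accessibility --- which, in the presence of volume preservation and of a one--dimensional center bundle, forces stable ergodicity --- and a \emph{blender} in the sense of Bonatti and D\'iaz, which forces robust transitivity. Every perturbation below will be arbitrarily $C^1$--small, volume preserving, and supported in a thin tubular neighbourhood $U\cong C\times D^2$ of $C$, adapted so that the $D^2$--fibres carry the splitting $E^{ss}\oplus E^{uu}$; thus absolute partial hyperbolicity, being a $C^1$--open condition, is never destroyed.

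The first move is to create a hyperbolic periodic orbit on $C$. Pick $N\ge 1$ with $f^N(C)=C$; then $TC$ is neither uniformly contracted nor expanded, so $TC=E^c|_C$ and $f^N|_C$ is conjugate to a rotation $R_\alpha$. It is classical that a circle rotation is a $C^1$--limit of Morse--Smale circle diffeomorphisms having exactly one hyperbolic attracting periodic orbit and exactly one hyperbolic repelling one --- if $\alpha$ is rational this is obvious, and if $\alpha$ is irrational one picks by Dirichlet a large $q$ with $q\alpha$ at distance less than $1/q$ from $\ZZ$, notes that $R_\alpha^{q}$ is $C^\infty$--close to the identity, and perturbs inside the $q$--orbit. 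A $C^1$--small perturbation of $f$ supported near $C$ realizes the corresponding $C^1$--small modification of $f^N|_C$; since this changes the center derivative of $f^N$ only by $O(\varepsilon)$, the standard volume--correction (Moser) argument further perturbs it, still $C^1$--small and supported near $C$, so as to restore the volume exactly. The outcome is a diffeomorphism $C^1$--close to $f$, still absolutely partially hyperbolic and volume preserving, whose restriction to $C$ is Morse--Smale with a center--contracting periodic orbit $\mathcal{O}_p$ (so $p\in\mathcal{O}_p$ has $\dim E^s(p)=2$) and a center--expanding periodic orbit $\mathcal{O}_q$ (so $\dim E^s(q)=1$), the two joined within $C$ by separatrices running from $\mathcal{O}_q$ to $\mathcal{O}_p$.

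Next, after a further arbitrarily $C^1$--small volume preserving perturbation near $C$ I may assume the diffeomorphism is accessible, this being moreover a $C^1$--stable property since $\dim E^c=1$ (Rodriguez Hertz, Rodriguez Hertz and Ures); the Burns--Wilkinson ergodicity criterion --- whose center--bunching hypothesis is automatic when the center is one--dimensional --- then shows that it is stably ergodic among $C^2$ volume preserving perturbations, so that every $C^1$--nearby volume preserving diffeomorphism is transitive. It remains to handle dissipative perturbations, and here the circle $C$ enters crucially: a normally hyperbolic circle carrying Morse--Smale dynamics with a center--contracting sink $\mathcal{O}_p$ and a center--expanding source $\mathcal{O}_q$ joined along $C$ is precisely the configuration from which Bonatti and D\'iaz produce, by an arbitrarily $C^1$--small perturbation, a blender --- equivalently a $C^1$--robust heterodimensional cycle between the continuations of $\mathcal{O}_p$ and $\mathcal{O}_q$. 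One must check that this perturbation can be realized volume preservingly and small enough to remain inside the accessibility neighbourhood just obtained; granting this, a last arbitrarily $C^1$--small volume preserving perturbation uses the blender together with accessibility to make the strong stable and strong unstable foliations robustly minimal, in the spirit of Bonatti, D\'iaz and Ures. Robust minimality of, say, the strong unstable foliation yields robust transitivity at once: for any $g$ that is $C^1$--close and any open sets $V$ and $W$, the closure of the forward $g$--orbit of an unstable disc contained in $V$ contains an entire dense strong unstable leaf, hence meets $W$. The resulting diffeomorphism $F$ is then robustly transitive, and being $C^1$--close to the stably ergodic diffeomorphism produced a moment earlier it is also stably ergodic, which is what we wanted.

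The first two moves --- creating the hyperbolic periodic orbits on $C$, and then arranging accessibility so as to invoke Burns--Wilkinson --- are routine. The hard part is the last one: checking that the blender--producing perturbation of Bonatti and D\'iaz, normally performed with no conservativity constraint, can be realized \emph{volume preservingly} while staying small enough to keep accessibility, and --- the real point --- that accessibility, rather than merely the transitivity already in hand, can be converted, through the blender, into \emph{robust} minimality of the strong foliations, so that the homoclinic class of the blender robustly fills all of $M$.
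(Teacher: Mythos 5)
Your overall skeleton (accessibility plus one-dimensional center to get stable ergodicity among conservative perturbations, a conservative blender creation in the spirit of Bonatti--D\'iaz and its conservative version by Rodriguez Hertz--Rodriguez Hertz--Tahzibi--Ures, and a Morse--Smale perturbation of the rotation on the normally hyperbolic circle) matches the ingredients of the paper's proof. The genuine gap is exactly the step you flag at the end and then assume (``granting this''): you propose to obtain robust transitivity by converting accessibility plus the blender into \emph{robust minimality} of the strong foliations ``in the spirit of'' \cite{BDU}. This is not a known implication, and as a strategy it is close to circular: the minimality results of \cite{BDU} are proved \emph{for robustly transitive} partially hyperbolic diffeomorphisms (robust transitivity is a hypothesis there, not a conclusion), and accessibility is not a $C^1$-robust substitute under dissipative perturbations in this argument. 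So the very property you need --- robust transitivity --- is left resting on an unproved mechanism, which you yourself identify as ``the real point.''

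The paper closes this step differently, and more cheaply, via the robust transitivity criterion of \cite[Proposition 7.4]{BDV}: after the ergodic (hence transitive) conservative perturbation, the normally hyperbolic circle persists and, by transitivity plus normal hyperbolicity, its center-stable and center-unstable manifolds are \emph{robustly} dense; one then creates the blender by a small conservative perturbation (\cite{BD}, \cite{HHTU}), and performs the Morse--Smale perturbation along the circle \emph{choosing the position} of the two periodic orbits so that their strong stable and strong unstable manifolds intersect the activating region of the blender. These three facts are precisely the hypotheses of \cite[Proposition 7.4]{BDV}, which yields robust transitivity directly, with no minimality of foliations needed; a final small conservative perturbation recovers stable ergodicity inside the robustly transitive neighborhood. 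Two smaller remarks: (i) your claim that the sink--source configuration on the circle is ``precisely'' the Bonatti--D\'iaz configuration for a robust heterodimensional cycle is too quick --- the connection between the one-dimensional strong manifolds of the two orbits still has to be produced (using transitivity) before a cycle, and hence a blender, appears; the paper sidesteps this by quoting the blender-creation results directly; (ii) doing the Morse--Smale perturbation \emph{before} the blender exists, as you do, loses the possibility of positioning the periodic orbits relative to the activating region, which is what makes the \cite{BDV} criterion applicable.
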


\begin{proof} The proof compiles several well known results. Let us fix $\eps>0$. As explained above, one can make a $\eps/3$-small $C^1$-volume preserving perturbation $f_1$ of $f$ such that $f_1$ is ergodic (in particular it is transitive). If $\eps$ is sufficiently small, by normal hyperbolicity of the circle, the circle persists and it is at $C^1$-distance smaller than $\eps/3$ of a rotation.

We would like to put ourselves in the hypothesis of \cite[Proposition 7.4]{BDV} which requires the construction of blenders. Notice first, that since $f$ is partially hyperbolic, the first hypothesis of the proposition is verified.

Notice that it was proved originally in \cite{BD} (and later in \cite{HHTU} in the conservative setting) that it is always possible to make a $\eps/3$-small $C^1$ (volume preserving) perturbation which creates a blender.

Using the transitivity and the normally hyperbolic circle leaf we know that the center-stable and center-unstable manifolds of the circle are robustly dense (see the proof of \cite[Proposition 7.4]{BDV}). Making a perturbation along the circle (of $C^1$-size less than $\eps/3$) one gets a Morse-Smale dynamics on the circle with only two periodic orbits (since it is close to a rotation) and by chosing their position one can guarantee that their strong manifolds intersect the activating region of the blender. This provides the second and third hypothesis of \cite[Proposition 7.4]{BDV} which provide the desired robust transitivity.

A further arbitrarily small $C^1$-perturbation gives stable ergodicity in addition to robust transitivity.
\end{proof}

The diffeomorphism $F_0$ is in the hypothesis of the Proposition  since there is at least one center circle leaf disjoint from all perturbations and since the dynamics on the circle is the time-1 map of a flow (and therefore conjugate to a rotation). This completes the proof of Theorem \ref{t.main} for this type of manifolds.

\subsection{Further remarks}
\subsubsection{Multiple Dehn twists}
\label{subsection_dehn_twists}
Note that given a collection of disjoint simple closed geodesics on a hyperbolic surface we can pick a sequence of hyperbolic metrics so that the lengths of all these geodesics go to zero. By considering disjoint collars we can perform Dehn twists along these geodesics simultaneously and, thus, obtain partially hyperbolic representative in corresponding mapping class.

%

 \subsubsection{Higher dimensions} By Mostow rigidity, the mapping class group of a negatively curved manifold of dimension 3 or higher is finite. Therefore our scheme cannot be applied to a geodesic flow on such manifold to produce partially hyperbolic diffeomorphisms whose finite iterates are not homotopic to identity.

  \subsection{Modification on graph manifolds: examples based on Handel-Thurston Anosov flows.}


The purpose of this section is to explain that our construction can be modified to yield partially hyperbolic diffeomorphism on graph manifold which admit Handel-Thurston Anosov flows~\cite{HT}. We merely observe that our mechanism for hyperbolicity is very well compatible with Handel-Thurston mechanism and, hence, they can be applied at the same time.

Let us briefly recall the Handel-Thurston construction. Let $S$ be a hyperbolic surface of genus two or higher and let $\gamma$ be a simple closed geodesic on $S$. Cutting along $\gamma$ creates two boundary components for $T^1S$ which are 2-tori $\TT^2=S^1\times S^1$, where the first $S^1$ corresponds to $\gamma$ and the second $S^1$ to the fiber circle. To a obtain the graph manifold $M$ reglue these boundary components with a shearing map $F\colon \TT^2\to \TT^2$ given by
$$
(x,\alpha)\mapsto (x+a\alpha,\alpha),
$$
where $a\in\ZZ\backslash\{0\}$. Because the angular coordinate stays unchanged, the differential $DF$ matches the Anosov vector field on
the boundary components. Hence the geodesic flow on $T^1S$ induces a flow on $M$.
The construction is summarized on the following figure taken from~\cite{HT}.
\begin{figure}[htbp]
\begin{center}
\includegraphics{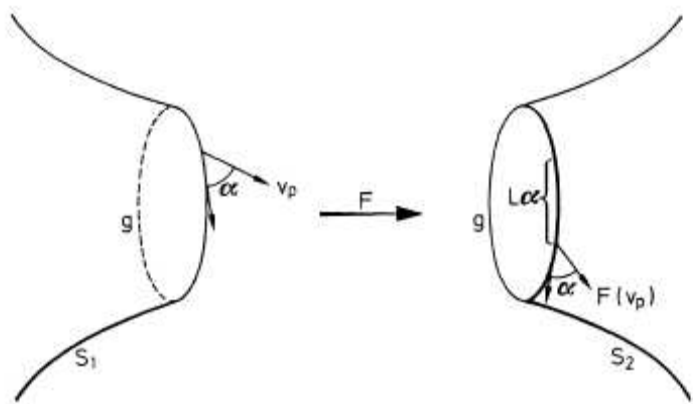}
\end{center}
 \caption{Shear with $a=1$}
\label{fig1}
\end{figure}
Among other things, Handel and Thurston showed that if one makes an appropriate choice of $a$, then this flow is a volume preserving Anosov flow.

Now, as before, we fix a sequence of hyperbolic metrics $\{g_n; n\ge 1\}$ on $S$ such that the length of $\gamma$ tends to zero. Each of these metrics yields a Handel-Thurston flow on the graph manifold $M$ whose time-one map is denoted by $f_n\colon M\to M$. Note that our construction of the Dehn twist occurs in the ``one-sided collar'' of $\gamma$. Hence the Dehn twists $\rho_n$, $n\ge 1$, induce diffeomorphism $D\rho_n\colon M\to M$ (which are no longer differentials, but rather ``glued differentials"; however we keep the same notation for consistency).

\begin{theo}
\label{theorem_basic3}
Let $M$, $f_n$ and $D\rho_n$, $n\ge 1$, be all as described above. Then for all sufficiently large $n$ the diffeomorphisms $D\rho_n\circ f_n$ are absolutely partially hyperbolic. Furthermore, these diffeomorphisms and their finite iterates are not homotopic to identity.
\end{theo}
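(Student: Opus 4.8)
The plan is to repeat the proof of Theorem~\ref{theorem_basic} almost verbatim. The only structural fact I would need is that $D\rho_n$ is supported in $T^1C_n$ and that, since $C_n$ is a one-sided collar of $\gamma$, on $T^1C_n\subset M$ the Handel--Thurston flow is simply the geodesic flow of $(S,g_n)$ while $D\rho_n$ acts trivially in a neighborhood of the gluing torus $T=T^1|_{\gamma}$ (recall $\rho$ is flat at the endpoints). Thus every estimate established near the neck in Section~\ref{s.Exampleunittangent} transfers unchanged, and the regluing, which takes place away from where the perturbation really acts, cannot interfere with it.

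First I would record that for the appropriate choice of $a$ the Handel--Thurston flow attached to $g_n$ is Anosov~\cite{HT}, so that $f_n$ is absolutely partially hyperbolic with $E^c_{f_n}$ spanned by the flow direction, and that its hyperbolicity constants can be taken independent of $n$. Indeed, on the part of $M$ coming from $T^1S$ the flow is a constant curvature $-1$ geodesic flow, whose rates are $e^{\mp t}$ irrespective of the metric; and since $\ell_n\to 0$, the shear $F$ moves the $\gamma$-circle by an amount proportional to $\ell_n$, hence tends to the identity, so the gluing becomes an arbitrarily small perturbation and the neighborhood of $T$ on which the metric of $M$ cannot be taken of curvature $-1$ has width $\eps_n\to 0$. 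I would then equip $M$ with a smooth metric $g_n^M$ making the splitting $E^{ss}_{f_n}\oplus E^c_{f_n}\oplus E^{uu}_{f_n}$ orthogonal and~(\ref{eq_ph}) valid with $n$-independent constants, arranged so that on $T^1C_n$, after lifting to $\HH^2$ as in Proposition~\ref{prop_close_to_id}, it is $C^0$-close to the Sasaki metric of $\HH^2$; this is compatible with orthogonality of the splitting because of the next step.

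The genuinely new point --- and the hard part --- is to control the invariant splitting of $f_n$ on the neck. On $T^1S$ the Anosov splitting of the geodesic flow is read off pointwise from the metric, whereas on $T^1C_n\subset M$ the splitting of $f_n$ a priori feels the global dynamics, since orbits leave the neck and may visit the thin non-hyperbolic region around $T$. But the Collar Lemma, together with $\ell_n\to 0$, forces the embedded curvature $-1$ collar of $\gamma_n$ to have width $W_n\to\infty$; so an orbit issued from $T^1C_n$ spends stretches of time of length $\gtrsim W_n$ inside a curvature $-1$ region, interrupted only by visits of duration $O(\eps_n)$ to the bad region. Writing the stable (resp.\ unstable) line of $f_n$ as a solution $U$ of the Riccati equation $U'=1-U^2$ along the orbit, and using that $U$ remains, by uniform domination, in a fixed interval bounded away from $+1$ (resp.\ $-1$), one finds that $U$ relaxes to within $O(e^{-2W_n})$ of $-1$ (resp.\ $1$) along each long curvature $-1$ stretch and is perturbed by only $O(\eps_n)$ at each bad visit; hence $U=-1+o(1)$ (resp.\ $1+o(1)$) at every base point of $T^1C_n$. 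Consequently, on $T^1C_n$ (lifted to $\HH^2$) the bundles $E^\sigma_{f_n}$ converge uniformly in $C^0$ to the fixed hyperbolic splitting $E^\sigma_f$ of $T^1\HH^2$; in particular they are asymptotically orthogonal for the Sasaki metric --- which makes the choice of $g_n^M$ possible --- and asymptotically equicontinuous.

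Granting all this, the proof concludes as for Theorem~\ref{theorem_basic}. Off $T^1C_n$ the map $D\rho_n$ is the identity; on $T^1C_n$, Corollaries~\ref{cor_close_to_id} and~\ref{cor_almost_isometry2} give $d_{C^\infty}(D\rho_n,\mathrm{id})\to0$ and $d_{C^\infty}((D\rho_n)^*\hat g_n,\hat g_n)\to0$ on the $\HH^2$-lifts, and combining these with the convergences $E^\sigma_{f_n}\to E^\sigma_f$ and $g_n^M\to\hat g$ on $T^1C_n$ shows, exactly as in Proposition~\ref{prop_respectful}, that an appropriate lift of $\{D\rho_n\}$ is $ph$-respectful relative to $(f_n,g_n^M)$; that is,~(\ref{eq_respectful1}) and~(\ref{eq_respectful2}) hold. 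I would then run the cone-field argument in the proof of Proposition~\ref{prop_ph} --- which uses only the uniform inequalities~(\ref{eq_ph}) and the $ph$-respectfulness bounds, and therefore applies verbatim to the $n$-dependent data $(f_n,g_n^M)$ (alternatively one passes to a suitable cover of $M$ on which the relevant part of the flow is fixed) --- to get that $D\rho_n\circ f_n$ is absolutely partially hyperbolic for all large $n$. Finally, $f_n$ is isotopic to the identity, so $(D\rho_n\circ f_n)^k$ is homotopic to $(D\rho_n)^k$ for every $k\ge1$; and $D\rho_n$ has infinite order in the mapping class group of $M$, by the analogue of the argument in Theorem~\ref{theorem_basic}, because it acts with infinite order on $H_1(M;\ZZ)$ --- this is the content of Remark~\ref{rema_HT_on_homology}. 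Apart from the control of $E^\sigma_{f_n}$ on the neck, the whole proof is a transcription of Section~\ref{s.Exampleunittangent}.
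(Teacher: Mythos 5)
Your treatment of partial hyperbolicity is essentially the paper's: the paper argues that since the shear of the Handel--Thurston gluing map is proportional to the length of $\gamma$, the Anosov splitting of the Handel--Thurston flow converges to the splitting of the constant-curvature geodesic flow as $\ell_n\to 0$, so it is almost orthogonal with almost unchanged rates and the proof of Theorem~\ref{theorem_basic} goes through; your Riccati/collar-width argument is a more hands-on route to the same control of $E^\sigma_{f_n}$ on the neck and is acceptable.

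The genuine gap is in the last step, where you claim that $D\rho_n$ has infinite order in the mapping class group of $M$ ``because it acts with infinite order on $H_1(M;\ZZ)$ --- this is the content of Remark~\ref{rema_HT_on_homology}.'' That is a misreading of the remark, and in the setting of Theorem~\ref{theorem_basic3} the claim is false. In this theorem the Dehn twist is performed in the one-sided collar of the \emph{same} geodesic $\gamma$ along which the Handel--Thurston surgery is done, and Remark~\ref{rema_HT_on_homology} states precisely that in this case the class of $\gamma$ vanishes in $H_1(M;\ZZ)$, so the Dehn twist acts \emph{trivially} on homology (the homological argument is only available when the twist is along a non-separating geodesic disjoint from the surgery curve). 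Likewise, the $\pi_1$-argument of Theorem~\ref{theorem_basic} does not transfer: after the surgery $M$ no longer fibers over $S$, so there is no epimorphism onto $\pi_1(S)$ through which to push $\rho_n$. This is why the paper's proof of the homotopy statement goes through $3$-manifold topology instead: Waldhausen's theorem (for irreducible, sufficiently large manifolds homotopy implies isotopy, reducing the problem to showing $D\rho_n$ is not isotopic to the identity), McCullough's Proposition~4.1.1 (an isotopy to the identity can be assumed to preserve the incompressible torus along which one cuts), and Johannson's Proposition~25.3 (computation of the mapping class group of the pieces), which together show that $D\rho_n$ and all its nonzero powers are not isotopic, hence not homotopic, to the identity. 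Without replacing your homology claim by an argument of this kind, the second assertion of the theorem is unproved.
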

\begin{proof}[Sketch of the proof of Theorem~\ref{theorem_basic3}]
The mechanism of hyperbolicity of Handel and Thurston is summarized on Figures~\ref{figHT2} and~\ref{figHT3} taken from~\cite{HT}.
\begin{figure}[htbp]
\begin{center}
\includegraphics{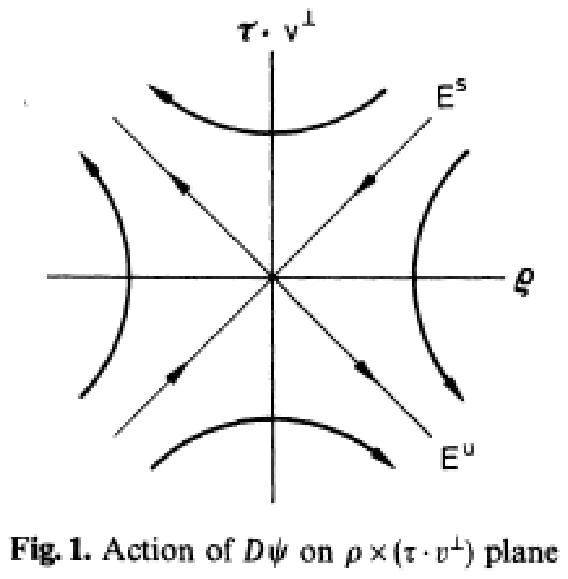}
\end{center}
 \caption{}
\label{figHT2}
\end{figure}
\begin{figure}[htbp]
\begin{center}
\includegraphics{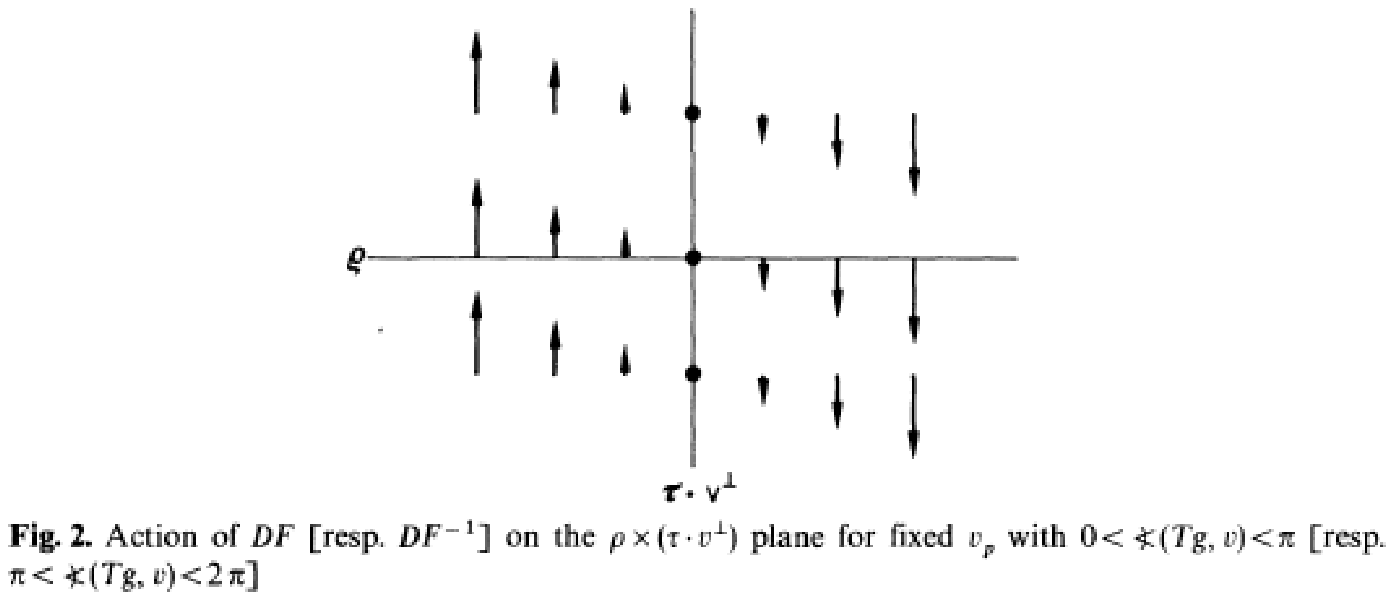}
\end{center}
 \caption{}
\label{figHT3}
\end{figure}
Figure~\ref{figHT2} depicts the action of the geodesic flow on the plane transverse to the flow in certain special coordinates. Figure~\ref{figHT3} depicts the action of the differential $DF$ of the gluing map on the transverse plane in the same coordinates. Here the strength of shear depends on $L$ --- the length of $\gamma$. Hence, as length of $\gamma$ tends to zero, $DF$ becomes close to identity. It readily follows that the Anosov splitting of the Handel-Thurston flow gets arbitrarily close to the original Anosov splitting. Hence the former is almost orthogonal. Further, the expansion and contraction rates are also remain almost the same. Therefore, by direct inspection one can check that our proof of Theorem~\ref{theorem_basic} goes through in this setting as well.

To finish the proof, one must show that the resulting
diffeomorphism and its iterates are not homotopic to identity. For
the sake of brevity, we will only indicate the needed results from
3-manifold topology:
\begin{itemize}
\item In \cite{Waldhausen} it is shown that for irreducible and
sufficiently large 3-manifolds (the manifolds we are dealing here
are irreducible since they admit an Anosov flow and sufficiently
large since they contain an incompressible tori) homotopy and
isotopy classes coincide, so it is enough to show that $D\rho_n$
is not isotopic to the identity.

\item In \cite[Proposition 4.1.1]{McC} it is shown that in this
situation (i.e. the manifold is not the mapping torus of a linear
Anosov diffeomorphism of $\TT^2$), if $D\rho_n$ were isotopic to
the identity then one could assume that the isotopy fixes the
torus on which we have cut the manifold all along the isotopy.

\item In \cite[Proposition 25.3]{Johannson} the mapping class
group of the resulting pieces after cutting along the tori is
studied. In particular, one can use this result and the previous
remark to check that $D\rho_n$ is not isotopic to identity.
\end{itemize}
\end{proof}

\begin{rema}
 The construction of a volume preserving modification of this example is the same as the one in Subsection~\ref{section_vp}. Existence of stably ergodic and robustly transitive perturbations can be seen in the same way as in Subsection~\ref{sect:RTSE-T1S}.
\end{rema}
\begin{rema}
Similarly to our remark in~\ref{subsection_dehn_twists}, we can also pick two collections of disjoint simple closed geodesic on $S$ (the collections may coincide) and perform Handel-Thurston surgery with respect to one set and the Dehn twist construction with respect to the second set.
\end{rema}
\begin{rema}\label{rema_HT_on_homology}
We also would like to remark that in the case when at least one of the Dehn twists is done on a non-separating geodesic along which the Handel-Thurston surgery is not performed then one can directly check that the induced homology automorphism is of infinite order. Hence, in this case, one does not need to rely on 3-manifold theory to see that the constructed diffeomorphism and its iterates are not homotopic to identity. In contrast, if the Handel-Thurston surgery and the Dehn twist are being done on the same geodesic (separating or not) the homology of this geodesic vanishes in the resulting graph manifold; and hence, the Dehn twist is identity in homology.
\end{rema}

\section{An example on a graph manifold}\label{s.ExampleBL}

\subsection{Anosov flows transverse to tori}\label{ss.covering}  Let $Y$ be an Anosov vector field on a 3-manifold $M$ and let $T \subset M$ be a torus transverse to $Y$. It is
well known that $T$ must be incompressible~\cite{Br}. A systematic study of Anosov flows transverse to tori has been recently carried out in~\cite{BBY}, yet some questions still remain open. Most parts of our construction work well for all examples of Anosov flows transverse to tori which are not suspensions, however, at some stages we shall rely on the specific example from~\cite{BL} (particularly, Lemma \ref{l.foliationsinterT} below).

We will consistently use the same notation for vector fields and flows generated by them; \eg we write $Y^t$ for the flow generated by the vector field $Y$. We begin our presentation with the following lifting construction.

\begin{prop}\label{p.cycliccovering} For every $t_1>0$ there exists a finite connected covering $\hat M\to M$ such that if $\hat Y$ is the lift of $Y$ and $\hat T$ is a (connected component of) lift of $T$ then for all $t\in(0,t_1)$ one has $\hat
T \cap \hat Y^t(\hat T)= \emptyset$. Moreover, with respect to the metric on $\hat M$ induced by the covering map, the $C^1$-norm of $\hat Y$ is the same as the one of $Y$ and the time-one map $\hat Y^1$ is $(\ell, \lambda,\mu)$-partially hyperbolic whenever $Y^1$ is.
\end{prop}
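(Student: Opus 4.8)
The plan is to obtain $\hat M$ as a finite cyclic cover of $M$ dual to the transverse torus $T$, chosen with enough sheets that a fixed lift $\hat T$ of $T$ gets ``unwound'' so that the first return time of the lifted flow to $\hat T$ exceeds $t_1$. I would start by fixing a flow box: since $T$ is compact and transverse to $Y$, there is $\eps_0>0$ such that $(\theta,s)\mapsto Y^s(\theta)$ embeds $T\times[-\eps_0,\eps_0]$ onto a tubular neighborhood $U$ of $T$, so that $Y^s(T)\cap T=\emptyset$ for $0<|s|\le\eps_0$ and inside $U$ the coordinate $s$ increases strictly along orbits. It follows that any two consecutive times at which a given orbit meets $T$ differ by more than $\eps_0$ (just after meeting $T$ the orbit is $Y^s(\cdot)$ of a point of $T$ with $s\in(0,\eps_0]$, hence lies off $T$); consequently an orbit segment of length at most $t_1$ meets $T$ at most $\lceil t_1/\eps_0\rceil$ times.

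Next I would build the cover. As $T$ is connected and two-sided and $Y$ is transverse to it, the normal component of $Y$ along $T$ has constant sign, so every orbit crosses $T$ in one fixed direction. If $T$ separates $M$, an orbit meeting $T$ twice would have to cross it in both directions, which is impossible; hence $Y^t(T)\cap T=\emptyset$ for every $t\ne0$ and one may take $\hat M=M$. If $T$ is non-separating, cut $M$ along $T$ to obtain a compact manifold $W$ with $\partial W=T^+\sqcup T^-$ (two copies of $T$), $Y$ pointing out of $W$ along $T^+$ and into $W$ along $T^-$, together with a gluing homeomorphism $\psi\colon T^+\to T^-$ such that $M=W/(x\sim\psi(x))$. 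For an integer $k\ge1$ set
$$
\hat M_k=\bigl(W\times(\ZZ/k\ZZ)\bigr)\big/\bigl((x,i)\sim(\psi(x),i+1):\ x\in T^+\bigr),
$$
which is a connected $k$-fold cover $p\colon\hat M_k\to M$; let $\hat Y=p^*Y$ and let $\hat T$ be the lift of $T$ identified with $T^+\times\{0\}$. The $k$ lifts $\hat T_0=\hat T,\dots,\hat T_{k-1}$ of $T$ are cyclically permuted by the deck group, and each forward crossing of one of them carries the orbit from the $i$-th copy of $W$ to the $(i+1)$-st, i.e.\ advances this index by $1$ modulo $k$.

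To conclude the first assertion, take $k=\lceil t_1/\eps_0\rceil+1$ in the non-separating case. If $\hat Y^t(\hat\theta)\in\hat T$ for some $\hat\theta\in\hat T$ and $0<t<t_1$, then the projected orbit of $\theta=p(\hat\theta)$ meets $T$ at time $0$ and at time $t$, hence $m$ times in $(0,t]$ with $1\le m\le\lceil t_1/\eps_0\rceil=k-1$ by the first step; but then $\hat Y^t(\hat\theta)\in\hat T_m$ with $m\not\equiv0\pmod k$, contradicting $\hat Y^t(\hat\theta)\in\hat T_0$. Thus $\hat T\cap\hat Y^t(\hat T)=\emptyset$ for all $t\in(0,t_1)$, and we set $\hat M=\hat M_k$. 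For the ``moreover'', equip $\hat M$ with the metric pulled back by $p$; then $p$ is a local isometry and $Dp$ intertwines $D\hat Y^t$ with $DY^t$ isometrically at every point, so $\hat Y$ has the same $C^1$-norm as $Y$, and the splitting pulled back from the Anosov splitting of $Y$ is $D\hat Y$-invariant and satisfies the same pointwise inequalities, so $\hat Y^1$ is $(\ell,\lambda,\mu)$-partially hyperbolic whenever $Y^1$ is.

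I expect the only genuine subtlety to be the middle step: handling the separating/non-separating dichotomy, writing down the cyclic cover explicitly, and checking carefully that a single forward crossing of a lift of $T$ advances the sheet index by exactly one. Once that bookkeeping is in place, the quantitative statement reduces to the elementary flow-box count of the first step and the ``moreover'' is formal.
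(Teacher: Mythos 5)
Your proof is correct and follows essentially the same route as the paper: cut $M$ along $T$ (handling the separating case trivially), glue sufficiently many copies cyclically so that the lifted return time to a fixed lift of $T$ exceeds $t_1$, and observe that the metric and partially hyperbolic data lift isometrically. The only difference is cosmetic: you bound the return time from below via a flow-box constant $\eps_0$ and track the sheet index explicitly, whereas the paper uses the minimal crossing time of the cut manifold and leaves that bookkeeping implicit.
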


Above we call a partially hyperbolic diffeomorphism $f: M \to M$ a {\it $(\ell, \lambda,\mu)$-partially hyperbolic} if

$$ \|Df^\ell |_{E^{ss}(x)}\| < \lambda < \|Df^\ell|_{E^c(x)} \| < \mu < \|Df^\ell |_{E^{uu}(x)}\| $$

\begin{proof} Consider the manifold $M_0$ obtained by cutting $M$ along $T$. If $M_0$ is not connected, then this means that $Y^t(T)$ is disjoint from $T$ for all $t \neq 0$ and therefore there is no need to consider a lift of $M$, \ie the posited property holds for $Y$.

If $M_0$ is connected, then it has two boundary components $T_1$ and $T_2$ such that $Y$ points inwards on $T_1$ and outwards on $T_2$. Let $t_0>0$ be the minimal time for an orbit to go from $T_1$ to $T_2$ so that for every $x \in T_1$ we have $Y^t(x) \notin T_2$ for $0\leq t \leq t_0$. Now glue $[t_1/t_0]+1$ copies of $M_0$ by identifying the copies of $T_2$ with the copies of $T_1$ and closing-up the last copy to obtain a compact boundaryless manifold $\hat M$ which covers $M$. Clearly, if $\hat T_1$ is a lift of  $T_1$ then $\hat Y^t (\hat T_1) \cap \hat T_1= \emptyset$ for $0 < t \leq t_1$.

The fact that the norm and the $(\ell, \lambda,\mu)$-partial hyperbolicity are not affected is direct from the fact that the differentiable and metric structures are obtained by lifting those from $M$.
\end{proof}

\begin{rema}\label{r.bundleslift} It is also easy to see that the bundles $E^{\sigma}_{\hat Y}$ are
the lifts of the bundles $E^{\sigma}_{Y}$ as well as the $\hat
Y_t$-invariant foliations ($\sigma=cs,cu,ss,uu$).
\end{rema}

\subsection{Coordinates in flow boxes}

As before, we consider an Anosov vector field $Y$ transverse to a torus $T$.  Since $Y$ is transverse to $T$ we obtain that the foliations $W^{cs}_{Y}$ and $W^{cu}_{Y}$ induce (transverse) foliations $\cL^{cs}_{Y}$ and $\cL^{cu}_{Y}$ on $T$.

We can consider coordinates $\theta_T\colon T \to \TT^2$ where $\TT^2= \RR^2 /_{\ZZ^2}$ with the
usual $(x,y)$-coordinates ($mod \ 1$). (The choice of $\theta_T$ is not canonical, and will be specified later in Lemma \ref{l.foliationsinterT}.)

We denote by $F^s$ and $F^u$ the foliations $\theta_T(\cL^{cs}_{Y})$ and $\theta_T(\cL^{cu}_{Y})$, respectively.

By Proposition \ref{p.cycliccovering}, for each $N>0$ and  integer $K\geq 1$ there exist a finite covering $\hat M_{N,K}\to M$ such that  $\hat T \cap \hat Y^t(\hat T)= \emptyset$ for $0 \leq t \leq NK$.  Then the set
$$\cU_N = \bigcup_{0 \leq t \leq N} \hat Y_t(\hat T)$$
 is injectively embedded in $\hat M_{N,K}$ and the first $K$-iterates by $\hat Y^N$ of $\cU_N$ have mutually disjoint interiors. (We have slightly abused the notation by ignoring the dependence of $\cU_N$ on $K$, but this will not cause any confusion.)

Consider the ``straightening diffeomorphism" $H_N: \cU_N \to [0,1] \times \TT^2$ given by

\begin{equation}\label{eq.HN}
 H_N(\hat Y_t(p)) = \left(\frac{t}{N}, \theta_T(p)\right), p\in\hat T
\end{equation}

For fixed $N$ and $K$ we shall denote by $\hat W^{\sigma}_N$ and $\hat E^{\sigma}_N$ the corresponding foliations and invariant bundles for the lift  $\hat Y$ ($\sigma=cs,cu,ss,uu$). (Again, we suppress dependance on $K$ to avoid overloading the notation.)

We also denote by $\cF^{uu}$ and $\cF^{ss}$  the one-dimensional foliations of $[0,1] \times \TT^2$ which in $\{t\} \times \TT^2$ coincide with the foliations $\{t\} \times F^u$ and $\{t\} \times F^s$ respectively.

\begin{lemm}\label{l.limit} Diffeomorphism $H_N$ have the following properties
\begin{itemize}
\item  $H_N(\hat W^{cs}_{N} \cap \cU_N) = [0,1] \times F^s\stackrel{\mathrm{def}}{=}
\cF^{cs}$

\item $H_N(\hat W^{cu}_{N} \cap \cU_N) = [0,1] \times F^u \stackrel{\mathrm{def}}{=}
\cF^{cu}$

\item $DH_N(\hat E^{ss}_{N})$ converges to the tangent
bundle of the foliation $\cF^{ss}$ as $N \to
\infty$.

\item $DH_N(\hat E^{uu}_{N})$ converges to the  tangent
bundle of the foliation $\cF^{uu}$ as $N \to
\infty$.

\end{itemize}
\end{lemm}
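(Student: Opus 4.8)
The plan is to split the four assertions into two exact identities (the first two bullets) and two asymptotic statements (the last two). The identities are formal consequences of the fact that the weak foliations $\hat W^{cs}_N,\hat W^{cu}_N$ are saturated by orbits of $\hat Y$; the asymptotic statements I would extract by writing $DH_N$ explicitly in ``flow-box coordinates'', where the flow direction is crushed by the factor $1/N$ while the directions transverse to the flow are left untouched.

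For the first bullet, recall that $\hat W^{cs}_N$ is tangent to $\hat E^{ss}_N\oplus\hat E^c_N$ and that $\hat E^c_N$ is spanned by $\hat Y$; hence every leaf of $\hat W^{cs}_N$ is a union of $\hat Y$-orbits. Let $\pi\colon\cU_N\to\hat T$ be the projection along the flow, $\pi(\hat Y_t(p))=p$. Since a $\hat W^{cs}_N$-leaf is orbit-saturated, a point $\hat Y_t(q)\in\cU_N$ lies in $\hat W^{cs}_N$ if and only if $q$ does, i.e. if and only if $q$ lies on a leaf of the induced foliation $\cL^{cs}_{\hat Y}$ of $\hat T$; thus $\hat W^{cs}_N\cap\cU_N=\pi^{-1}(\cL^{cs}_{\hat Y})$. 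From the defining formula~\eqref{eq.HN} one reads off $H_N(\pi^{-1}(A))=[0,1]\times\theta_T(A)$ for every $A\subset\hat T$, so, taking $A$ to run through the leaves of $\cL^{cs}_{\hat Y}$, I would conclude $H_N(\hat W^{cs}_N\cap\cU_N)=[0,1]\times\theta_T(\cL^{cs}_{\hat Y})=[0,1]\times F^s=\cF^{cs}$ (using the identification $\hat T\cong T$ of Remark~\ref{r.bundleslift} to recognize $\theta_T(\cL^{cs}_{\hat Y})$ as $F^s$). The second bullet is the same argument with $\hat W^{cu}_N$, $\cL^{cu}_Y$, $F^u$.

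For the last two bullets I would fix $x=\hat Y_t(p)$ with $p\in\hat T$, $t\in[0,N]$, and use the direct sum $T_x\hat M=\RR\,\hat Y(x)\oplus D\hat Y_t(T_p\hat T)$ coming from transversality of $\hat T$ to $\hat Y$. A short computation from~\eqref{eq.HN} gives $DH_N(\hat Y(x))=\tfrac{1}{N}\partial_t$ (with $\partial_t$ the unit vector along the $[0,1]$ factor) and $DH_N(D\hat Y_t(v))=(0,D\theta_T v)$ for $v\in T_p\hat T$ --- crucially, the latter does not depend on $N$. Writing a generator $w$ of $\hat E^{ss}_N(x)$ as $w=a\,\hat Y(x)+D\hat Y_t(v)$, and using $\hat E^{ss}_N\subset T\hat W^{cs}_N$ together with $T_x\hat W^{cs}_N=\RR\,\hat Y(x)\oplus D\hat Y_t(T_p\cL^{cs}_{\hat Y})$ (from the previous paragraph), we get $v\in T_p\cL^{cs}_{\hat Y}$, so $D\theta_T v$ is tangent to $F^s$ and $(0,D\theta_T v)\in T\cF^{ss}$. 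Therefore
$$
DH_N(w)=\tfrac{a}{N}\,\partial_t+(0,D\theta_T v),
$$
whose angle (in the product metric) with $T\cF^{ss}(H_N(x))$ equals $\arctan\big(|a|/(N\,|D\theta_T v|)\big)$. The same reasoning applied to $\hat E^{uu}_N$, $\hat W^{cu}_N$, $F^u$ yields the fourth bullet.

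The step I expect to be the main obstacle is the \emph{uniformity} of this last estimate: a priori $|a|$ might grow with the flow time $t$, which ranges up to $N\to\infty$, and one must also check nothing is lost in the covering parameter $K$. I would handle this by pulling back with $D\hat Y_{-t}$: since $\hat Y(x)=D\hat Y_t(\hat Y(p))$, we have $D\hat Y_{-t}(w)=a\,\hat Y(p)+v$, and this vector spans $\hat E^{ss}_N(p)$ because $\hat E^{ss}_N$ is $D\hat Y_t$-invariant. Now $\hat E^{ss}_N(p)$ is a continuous line field on $\hat T$ transverse to $\hat E^c_N(p)=\RR\,\hat Y(p)$, and by Remark~\ref{r.bundleslift} everything on $\hat T$ is the lift of fixed data on the compact torus $T$; so there are constants $0<c\le C$, independent of $N$, $K$ and $x$, with $v\ne 0$, $|a|\le C|v|$ and $|D\theta_T v|\ge c|v|$. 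Hence the angle above is at most $\arctan(C/(cN))$, which tends to $0$ uniformly over $\cU_N$ (equivalently over $[0,1]\times\TT^2$) as $N\to\infty$; this is the claimed convergence, and symmetrically for $\hat E^{uu}_N$.
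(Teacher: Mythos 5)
Your proof is correct and follows essentially the same route as the paper: the first two bullets come from $DH_N$ sending $\hat Y$ to $\frac{1}{N}\frac{\partial}{\partial t}$ together with flow-saturation of the weak foliations, and the last two from the $1/N$ crushing of the flow component while the flow component of the strong bundles stays uniformly comparable to the transverse one. Your pull-back to $\hat T$ by $D\hat Y_{-t}$ and the compactness/lifting argument is simply a careful justification of the paper's one-line assertion that ``the component of $\hat E^{\sigma}_N$ along $\hat Y$ is uniformly bounded,'' uniformly in $N$, $K$ and the point.
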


It is important to remark that the above convergence is with respect to the standard metric in $[0,1] \times \TT^2$ and not with respect to the push forward metric from the manifold via $H_N$.

\begin{proof} Because the differential of $H_N$ maps the vector field $\hat Y$ to the vector field $\frac{1}{N} \frac{\partial}{\partial t}$ the first two properties follow. Note that the component of $\hat E^{\sigma}_{N}$ along $\hat Y$ is uniformly bounded
($\sigma=ss,uu$). Therefore, contraction by a factor $\frac{1}{N}$ implies the posited limit behavior in the latter properties.
\end{proof}

\subsection{A diffeomorphism in a flow box which preserves transversalities}

Assume that there exists a smooth path $\{\varphi_s\}_{s\in [0,1]}$ of diffeomorphisms
of $\TT^2$ such that

\begin{itemize}
\item $\varphi_s=Id$ for $s$ in neighborhoods of $0$ and $1$,
\item the closed path $s\mapsto \varphi_s$ is not homotopically
trivial in $\diff(\TT^2)$, \item for every $s \in [0,1]$:
$$\varphi_s(F^u)\pitchfork F^s.$$
\end{itemize}

We use the coordinate chart $H_N\colon \cU_N\to[0,1]\times\TT^2$ to define
 diffeomorphism $\cG_N : \cU_N \to \cU_N$ by
$$
(s,x,y) \mapsto (s, \varphi_s(x,y)).
$$

The following lemma is immediate from our choice of $\{\varphi_s\}_{s\in [0,1]}$.
\begin{lemm}\label{l.perturbationorbitspace}
The diffeomorphism $\cG_N$ has the following properties:
\begin{itemize}
\item $\cG_N(H_N^{-1}(\cF^{uu}))$ is
transverse to $\hat W^{cs}_{\hat Y} = H_N^{-1}(\cF^{cs})$,
\item $\cG_N(\hat W^{cu}_{\hat Y})=\cG_N(H_N^{-1}(\cF^{cu}))$ is transverse
to $H^{-1}_N(\cF^{ss})$.
\end{itemize}
\end{lemm}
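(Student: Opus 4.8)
The statement is essentially immediate once one unwinds the definitions, so the "proof" is really a bookkeeping verification. First I would observe that $\cG_N$ is, by construction, a diffeomorphism that preserves each slice $\{s\}\times\TT^2$ and acts on it by $\varphi_s$. Consequently $\cG_N$ preserves the interval direction $\partial/\partial s$ pointwise at the level of the splitting $T([0,1]\times\TT^2)=\RR\partial_s\oplus T\TT^2$, and it carries the foliation $\{s\}\times\TT^2$ to itself; in particular it maps the $\TT^2$-factor foliations around. Pulling back through $H_N$, this says that $\cG_N$ fixes $\hat W^{cs}_{\hat Y}=H_N^{-1}(\cF^{cs})$ and $H_N^{-1}(\cF^{ss})$ leafwise within each flow-box slice, and similarly for the $cu$ and $uu$ objects — more precisely, in the straightened picture $\cF^{cs}=[0,1]\times F^s$, $\cF^{ss}$ has leaves $\{s\}\times(\text{leaf of }F^s)$, and likewise for the $u$-versions with $F^u$.

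The core of the argument is then the transversality input in the third bullet of our hypothesis: $\varphi_s(F^u)\pitchfork F^s$ for every $s\in[0,1]$. In the straightened coordinates, $\cG_N(H_N^{-1}(\cF^{uu}))$ has, in the slice $\{s\}\times\TT^2$, tangent lines equal to $D\varphi_s$ applied to the tangent lines of $\{s\}\times F^u$, i.e. it is the foliation $\{s\}\times\varphi_s(F^u)$; meanwhile $H_N^{-1}(\cF^{cs})=\cF^{cs}$ restricted to that slice is $\{s\}\times F^s$. Transversality of $\cG_N(H_N^{-1}(\cF^{uu}))$ to $\cF^{cs}$ inside $[0,1]\times\TT^2$ amounts to transversality, slice by slice, of $\varphi_s(F^u)$ with $F^s$ in $\TT^2$ (the $\partial_s$ direction is common to $\cF^{cs}$ but not to the one-dimensional foliation $\cG_N(H_N^{-1}(\cF^{uu}))$, so it contributes the missing complementary direction and causes no obstruction), which is exactly the hypothesis. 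Applying $H_N^{-1}$, a diffeomorphism, preserves transversality, giving the first conclusion. The second conclusion is symmetric: $\cG_N(\hat W^{cu}_{\hat Y})=H_N^{-1}(\cG_N(\cF^{cu}))$ has slices $\{s\}\times\varphi_s(F^u)$ (since $\varphi_s$ maps $F^u$-leaves to $\varphi_s(F^u)$-leaves and $\cF^{cu}=[0,1]\times F^u$), and $H_N^{-1}(\cF^{ss})$ has slices $\{s\}\times F^s$, so transversality again reduces to $\varphi_s(F^u)\pitchfork F^s$.

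I do not anticipate a genuine obstacle here; the only point requiring a little care — and worth stating explicitly — is that transversality in the three-dimensional flow-box must be checked between a one-dimensional foliation and a two-dimensional one, so one must confirm that the extra $\partial_s$-direction carried by $\cF^{cs}$ (respectively $H_N^{-1}(\cF^{ss})$) together with the slicewise transversality $\varphi_s(F^u)\pitchfork F^s$ genuinely spans the tangent space, which it does because $T\TT^2 = T\varphi_s(F^u)\oplus TF^s$ and adding $\RR\partial_s$ fills out the remaining dimension. Note also that this lemma makes no use of the first two bullets of the hypothesis on $\{\varphi_s\}$ (the support condition near $s=0,1$ and the nontriviality in $\diff(\TT^2)$); those will be needed later, the support condition to glue $\cG_N$ into a global diffeomorphism of $\hat M_{N,K}$ and the homotopical nontriviality to see that the resulting map is not homotopic to the identity, but they are irrelevant for the transversality statement at hand.
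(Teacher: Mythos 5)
Your verification is correct and is exactly the argument the paper has in mind — the paper simply declares the lemma ``immediate from our choice of $\{\varphi_s\}$,'' and your slice-by-slice reduction to $\varphi_s(F^u)\pitchfork F^s$, plus the observation that the extra $\partial_s$-type direction fills the remaining dimension, is the intended bookkeeping. One tiny imprecision: $D\cG_N$ does not fix $\partial/\partial s$ (it sends it to $\partial_s$ plus a vector tangent to the $\TT^2$-slice, since $\varphi_s$ varies with $s$), but this does not affect your span argument, because the image plane still contains the slice trace $D\varphi_s(TF^u)$ together with a vector having nonzero $\partial_s$-component.
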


Let $h_N \colon \hat M_{N,K} \to \hat M_{N,K}$ be the diffeomorphism which coincides with $\cG_N$ on $\cU_N$ and is identity outside $\cU_N$.

\begin{coro}\label{c.transversalityBL} There exists $N_0>0$ such that for $N \geq N_0$:
\begin{itemize}
\item $Dh_N(E^{uu}_{\hat Y})$ is transverse to $E^{cs}_{\hat Y}$,
\item $Dh_N(E^{cu}_{\hat Y})$ is transverse to $E^{ss}_{\hat Y}$.
\end{itemize}
\end{coro}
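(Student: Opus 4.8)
The plan is to conjugate the question, via the straightening charts $H_N$, into the \emph{fixed} model $[0,1]\times\TT^2$, where $h_N$ reads as the single ($N$-independent) diffeomorphism $\Phi\colon[0,1]\times\TT^2\to[0,1]\times\TT^2$, $\Phi(s,x,y)=(s,\varphi_s(x,y))$, and then to exhibit the two transversalities as open conditions that survive the limit $N\to\infty$ furnished by Lemma~\ref{l.limit}. Throughout I identify $E^\sigma_{\hat Y}$ with the bundles $\hat E^\sigma_N$ of Lemma~\ref{l.limit}.

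First I would dispose of the region outside $\cU_N$. There $h_N$ is the identity, so $Dh_N(E^{uu}_{\hat Y})=E^{uu}_{\hat Y}$ and $Dh_N(E^{cu}_{\hat Y})=E^{cu}_{\hat Y}$, and transversality with $E^{cs}_{\hat Y}$, resp.\ $E^{ss}_{\hat Y}$, is nothing but the Anosov splitting $T\hat M_{N,K}=E^{uu}_{\hat Y}\oplus E^{cs}_{\hat Y}=E^{ss}_{\hat Y}\oplus E^{cu}_{\hat Y}$ of $\hat Y$. Since $\varphi_s\equiv\mathrm{Id}$ for $s$ near $0$ and $1$, the map $h_N$ is also the identity near $\partial\cU_N$, so only interior points of $\cU_N$ remain.

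On $\cU_N$ I would write $h_N=H_N^{-1}\circ\Phi\circ H_N$ and push both pairs of bundles through $H_N$. By the first two bullets of Lemma~\ref{l.limit}, $DH_N$ carries $E^{cs}_{\hat Y}$ and $E^{cu}_{\hat Y}$ \emph{exactly} onto $T\cF^{cs}$ and $T\cF^{cu}$ (these being the tangent bundles of $\hat W^{cs}_{\hat Y}$ and $\hat W^{cu}_{\hat Y}$), while by the last two bullets it carries $E^{ss}_{\hat Y}$ and $E^{uu}_{\hat Y}$ only \emph{approximately} onto $T\cF^{ss}$ and $T\cF^{uu}$, with error tending to $0$ as $N\to\infty$. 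Hence ``$Dh_N(E^{uu}_{\hat Y})\pitchfork E^{cs}_{\hat Y}$'' is equivalent to ``$D\Phi\bigl(DH_N(E^{uu}_{\hat Y})\bigr)\pitchfork T\cF^{cs}$'', and ``$Dh_N(E^{cu}_{\hat Y})\pitchfork E^{ss}_{\hat Y}$'' is equivalent to ``$D\Phi(T\cF^{cu})\pitchfork DH_N(E^{ss}_{\hat Y})$''. Applying the diffeomorphism $H_N$ to the conclusions of Lemma~\ref{l.perturbationorbitspace} gives the limiting transversalities $\Phi(\cF^{uu})\pitchfork\cF^{cs}$ and $\Phi(\cF^{cu})\pitchfork\cF^{ss}$ on the \emph{compact} manifold $[0,1]\times\TT^2$, so these hold with a uniform lower bound $\delta_0>0$ on the angle in the standard metric. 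Since $\Phi$ is a fixed smooth map, $D\Phi$ is uniformly continuous on the (compact) Grassmann bundle of lines over $[0,1]\times\TT^2$; therefore, for $N$ large, $D\Phi(DH_N(E^{uu}_{\hat Y}))$ stays within $\delta_0/2$ of $T\Phi(\cF^{uu})$ and $DH_N(E^{ss}_{\hat Y})$ stays within $\delta_0/2$ of $T\cF^{ss}$, which yields both posited transversalities. Taking $N_0$ to be the threshold beyond which these estimates hold finishes the argument.

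The step needing the most care is keeping track of the asymmetry between the bullets of Lemma~\ref{l.limit}: the center bundles are straightened exactly, whereas the strong bundles only converge, so the robustness of transversality must be quantified on the fixed model $[0,1]\times\TT^2$ — where $\Phi$ lives and where everything is compact — rather than on the manifolds $\hat M_{N,K}$, whose $H_N^{-1}$-pushed geometry degenerates as $N\to\infty$ (this is precisely the warning following Lemma~\ref{l.limit}). Apart from this bookkeeping, no input beyond Lemmas~\ref{l.limit} and~\ref{l.perturbationorbitspace}, openness of transversality, and uniform continuity of a fixed diffeomorphism is needed.
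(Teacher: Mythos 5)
Your argument is correct and follows essentially the same route as the paper, whose proof is a one-line combination of Lemma~\ref{l.perturbationorbitspace}, Lemma~\ref{l.limit}, and the fact that $h_N$ is the identity outside $\cU_N$; you have merely written out the details (working in the fixed compact model $[0,1]\times\TT^2$, using the exact straightening of the center bundles versus the convergence of the strong ones, and invoking openness of transversality), which is precisely the intended reasoning.
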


\begin{proof} This follows by combining Lemma \ref{l.perturbationorbitspace}, Lemma~\ref{l.limit} and the fact that outside $\cU_N$ the diffeomorphism $h_N$ is the identity.
\end{proof}

\begin{rema}\label{r.volumepreserving}
Notice that if there is a volume form $\omega$ on $T$ such that $\varphi_s$ preserves the form $(\theta_T)_\ast (\omega)$ for every $s$, then $h_N$ preserves volume form $\omega \wedge dY$.
\end{rema}

\subsection{Proof of partial hyperbolicity}

Let diffeomorphism $f_{N,K} \colon \hat M_{N,K} \to \hat M_{N,K}$ be the time-$N$ map of the flow generated by the vector field $\hat Y$ on $\hat M_{N,K}$.

\begin{prop}\label{p.phofBL}
For any sufficiently large $N$ there exists $K_0=K_0(N)$ such that for all $K\geq K_0$ the diffeomorphism $h_N \circ f_{N,K}$ is absolutely partially hyperbolic.
\end{prop}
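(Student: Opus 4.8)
The plan is to establish partial hyperbolicity of $h_N\circ f_{N,K}$ via the standard cone-field criterion, exploiting the fact that $f_{N,K}=\hat Y^N$ is the time-$N$ map of an Anosov flow (hence genuinely hyperbolic on $\hat E^{ss}_N\oplus\hat E^{uu}_N$ with central direction along $\hat Y$) and that $h_N$ is supported in $\cU_N$, where by Corollary~\ref{c.transversalityBL} it preserves the relevant transversalities once $N\geq N_0$. First I would fix such an $N\geq N_0$ and set up cone fields on $\hat M_{N,K}$: a center-unstable cone $\cC^{cu}$ around $\hat E^c_N\oplus\hat E^{uu}_N$, a center-stable cone $\cC^{cs}$ around $\hat E^{ss}_N\oplus\hat E^c_N$, and narrow strong cones $\cC^{uu}$, $\cC^{ss}$ around the strong bundles; on the complement of $\cU_N$ these are just the standard Anosov cone fields, and the point is to choose their apertures compatibly with the transversality margins furnished by Corollary~\ref{c.transversalityBL}. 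The role of $K$ is to create $K$ disjoint iterates of $\cU_N$ under $\hat Y^N$: iterating the strong-unstable cone forward, an orbit of $h_N\circ f_{N,K}$ passes through the support $\cU_N$ only once every $K$ applications of $f_{N,K}$, so the (possibly rate-destroying) effect of $Dh_N$ is diluted over $K$ strongly expanding steps of $\hat Y^N$; taking $K\geq K_0(N)$ large enough makes the net expansion still uniformly greater than one, and similarly for contraction in backward time.

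In more detail, the key steps in order are: (1) Record the uniform hyperbolicity constants of $\hat Y^N$: there are $\lambda<1<\mu$ (independent of $K$, by Proposition~\ref{p.cycliccovering} and Remark~\ref{r.bundleslift}) with $\|D\hat Y^N|_{\hat E^{ss}_N}\|<\lambda^N$, $\|D\hat Y^N|_{\hat E^{uu}_N}\|>\mu^N$, and center expansion/contraction controlled by the $C^1$-norm of $\hat Y$, which is also $K$-independent. (2) Fix once and for all a bound $C=C(N)=\sup_{\hat M_{N,K}}\max(\|Dh_N\|,\|Dh_N^{-1}\|)$; crucially $C$ depends only on $N$ (and on the fixed path $\{\varphi_s\}$), not on $K$, since $h_N$ is supported in $\cU_N$ and $\cU_N$ with its induced geometry is the same for all $K$. (3) Invariance of cone fields: for the strong-unstable cone, $D(h_N\circ f_{N,K})=Dh_N\circ D\hat Y^N$; away from $\cU_N$, $D\hat Y^N(\cC^{uu})\subset\cC^{uu}$ with strict contraction of aperture, and $Dh_N$ is the identity; at the single visit to $\cU_N$ in a length-$K$ block, $D\hat Y^N$ first maps $\cC^{uu}$ strictly inside itself and then $Dh_N$ maps it into $\cC^{cu}$ by the transversality of Corollary~\ref{c.transversalityBL} — but because $\cU_N$ recurs only once per $K$ steps, after $\hat Y^N$ is applied $K$ times in between, the cone is pulled back tightly around $\hat E^{uu}_N$, restoring invariance of $\cC^{uu}$ for the $K$-th iterate and hence for every iterate of $h_N\circ f_{N,K}$. (4) Expansion estimate: for $v\in\cC^{uu}$, over a block of $K$ iterates of $h_N\circ f_{N,K}$ the norm is multiplied by at least $\mu^{NK}\cdot C^{-1}$ (the $C^{-1}$ absorbing the worst case of the single $Dh_N$), and choosing $K_0(N)$ so that $\mu^{NK}>C\cdot\mu^{NK/2}$, i.e. $\mu^{NK/2}>C$, gives uniform exponential growth; symmetrically for $\cC^{ss}$ under the inverse. (5) Conclude by the cone-field criterion for (absolute) partial hyperbolicity, as in~\cite{HasPes}, that $h_N\circ f_{N,K}$ is absolutely partially hyperbolic.

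The main obstacle I anticipate is step (3)–(4): making precise that the single application of $Dh_N$ per $K$-block does not spoil cone invariance. The subtlety is that $Dh_N(\cC^{uu})$ need only land in the wide cone $\cC^{cu}$, not back in $\cC^{uu}$, so one cannot argue block-by-block naively; the honest argument is to track the forward images of $\cC^{uu}$ along an entire orbit and show that after each visit to $\cU_N$, the subsequent $K$ applications of the strongly-contracting-on-cones map $D\hat Y^N$ squeeze the image back inside $\cC^{uu}$ before the next visit — this requires $K_0(N)$ to be large compared with how much $Dh_N$ can open the cone, which is exactly the $C(N)$-dependent, $K$-independent quantity from step (2). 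One must also verify the center direction behaves correctly: $Dh_N$ need not preserve $\hat E^c_N=\langle\hat Y\rangle$ inside $\cU_N$, but since $\cU_N$ is a flow box in which $H_N$ straightens $\hat Y$ and $\cG_N$ acts fiberwise on the $\TT^2$-factor, $Dh_N$ maps the $t$-direction to a vector with bounded, $K$-independent components, so the center cone is likewise restored over the $K$ intervening hyperbolic steps. Once the geometry of "one bad step per $K$ good steps, with the bad step of $K$-independent size" is set up cleanly, the estimates are routine.
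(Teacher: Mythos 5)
Your proposal is correct and follows essentially the same route as the paper: the cone-field criterion with $K$-independent constants, at most one application of $Dh_N$ per block of $K$ unperturbed iterates of the flow's time-$N$ map, recovery of the unstable cone in a $K$-independent number of steps via Corollary~\ref{c.transversalityBL}, a dilution estimate for the expansion, and the time-reversed argument for the stable cone. One point of precision: what makes the recovery time bounded is the uniform transversality of $Dh_N(\cC^{uu})$ to $E^{cs}_{\hat Y}$ furnished by that corollary (the paper phrases this as a lower bound on the angle between $Dh_N(\cE^{uu})$ and $E^{cs}_{\hat Y}$), not mere containment of the image in a center-unstable cone nor the bound on $\|Dh_N^{\pm 1}\|$ alone.
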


\begin{proof} Let $F_{N,K}= h_N \circ f_{N,K}$. The proof uses the cone-field criteria.

Recall (see Proposition \ref{p.cycliccovering}) that for large enough $N$ and any $K$ we have that $f_{N,K}: \hat M_{N,K} \to \hat M_{N,K}$ is $(1, \lambda, \mu)$-partially hyperbolic for some $\lambda<1<\mu$. We shall choose $1< \mu_1 < \mu_0 < \mu$ (and $\lambda< \lambda_0 < \lambda_1 < 1$ for the symmetric argument).

First consider a fixed value of $N\geq N_0$ given by Corollary \ref{c.transversalityBL}. Then, using partial hyperbolicity and Corollary \ref{c.transversalityBL}, we can choose a cone-field $\cE^{uu}$ about $E^{uu}_{\hat Y}$ such that

\begin{itemize}
\item cone-field $\cE^{uu}$ is transverse to $E^{cs}_{\hat Y}$;
\item $Df_{N,K}(\overline{\cE^{uu}})\en \cE^{uu}$;
\item cone-field $Dh_N (\cE^{uu})$ is also transverse to $E^{cs}_{\hat Y}$;
\end{itemize}
and
\begin{itemize}
\item for every $v \in \cE^{uu}\setminus \{0\}$ one has that $\|Df_{N,K} v \| > \mu_0 \|v\|$;
\item for every $v \in T_x\hat M_{N,K}$ such that $Df_{N,K} v \notin \cE^{uu}$ one has that $\|Df_{N,K} v\| \leq \mu_1 \|v\|$.
\end{itemize}

We shall show that if $K$ is sufficiently large then we can construct a cone-field $\cC^{uu}$ such that for a sufficiently large iterate $n_K>0$ we have

$$
DF_{N,K}^{n_K} (\overline{\cC^{uu}}) \en \cC^{uu}
$$
and there exists $\hat \mu>1$ such that

$$
\|DF_{N,K}^{n_K}v \| > \hat \mu \|v \|,\;\; \mbox{if}\;\;v\in \cC^{uu}\setminus \{0\}.
$$
and
$$
\|DF_{N,K}^{n_K} v\| \leq \hat \mu \|v\|,\;\;\mbox{if}\;\; DF_{N,K}^{n_K} v \notin \cC^{uu}.
$$

Consider the minimal $n_0$ such that $Df_{N,K}^{n_0}(Dh_N(\overline{\cE^{uu}})) \en \cE^{uu}$. Because the angle between $Dh_N(\cE^{uu})$ and $E^{cs}_{\hat Y}$ is uniformly bounded from below,  the value of $n_0$ is independent of $K$. Define the cone-field $\cC^{uu}$ as follows:

\begin{itemize}
\item $\cC^{uu}=Dh_N(\cE^{uu})$ on $\cU_N$,
\item $\cC^{uu}=\cE^{uu}$ outside $\cU_N$.
\end{itemize}

\begin{clai}
There exists $n>0$ such that $DF_{N,K}^n (\overline{\cC^{uu}}) \en \cC^{uu}$.
\end{clai}

\begin{proof}
Notice that $\cC^{uu}$ is indeed smooth since $h_N$ coincides with the identity in a neighborhood of the boundary of $\cU_N$. Let us first show that if $n = 2\ell n_0$ with $\ell \geq 2$ and $K\gg 2n_0$ then
$$
DF_{N,K}^n (\overline{\cC^{uu}}) \en \cC^{uu}.
$$
This is quite direct. Notice first that for points $x \notin \cU_N \cup f_{N,M}^{-1}(\cU_N)$ one has that $DF_{N,K} (\overline{\cC^{uu}}(x)) \en \cC^{uu}(f_{N,M}(x))$ since this holds true for $\cE^{uu}$ and $f_{N,M}$  (and $F_{N,K}= f_{N,M}$ on $f_{N,M}^{-1}(\cU_N)$). For points $x \in f_{N,M}^{-1}(\cU_N)$ we have that $DF_{N,K} (\overline{\cC^{uu}(x)}) = Dh_N Df_{N,K}(\overline{\cC^{uu}(x)}) \en Dh_N (\cE^{uu}(f_{N,K}(x))) = \cC^{uu}(F_{N,K}(x))$. Finally, if $x \in \cU_N$ then, by construction, we have that $DF_{N,K}^{n_0}(\overline {\cC^{uu}}(x)) \en \cC^{uu}(F_{N,K}(x))$. This implies the inclusion

$$ DF_{N,K}^{2n_0} (\overline{\cC^{uu}}) \en \cC^{uu}. $$
\end{proof}

To show absolute partial hyperbolicity (\ie the existence of $\hat \mu$ as stipulated earlier) it is enough to consider large enough $\ell$ as above and $n_K=2\ell n_0$ (and $K \geq n_K$, recall that $n_0$ and $\ell$ are independent of $K$). Let $C = \max_x \{\|DF_{N,K}(x)\|,\|DF_{N,K}^{-1}(x)\|\}$.

Notice first that for any point $x\in M_{N,K}$ one has that $F^i_{N,K}(x)\in \cU_N$ for at most one value of $0 \leq i \leq n_K$. This implies that:

\begin{itemize}
\item on the one hand, if $v \in \cC^{uu}$, then $\|DF_{N,K}^{n_k} v\| \geq C^{-1} \mu_0^{n_k-1} \|v\|$;
\item on the other hand, if $DF_{N,K}^{n_k} v \notin \cC^{uu}$ then $\|DF_{N,K}^{n_k} v\| \leq C \mu_1^{n_k-1} $.
\end{itemize}

\noindent which for large enough $n_k$ verifies the desired properties.

Finally, by reversing the time, a symmetric argument provides a cone-field $\cC^{ss}$ with analogous properties and, therefore, yields absolute partial hyperbolicity of $F_{N,K}$ for a sufficiently large $K$.
\end{proof}

\subsubsection{Some remarks on this approach} 
The approach presented here requires to consider large finite coverings of the initial manifold in order to ensure large return times to the transverse tori. This method provides uniform bounds on the constants of the partial hyperbolicity and can be compared with the mechanism used in section~\ref{s.Exampleunittangent} to construct examples starting from geodesic flows in constant negative curvature. 

After finishing the first draft of this paper, we discovered a different mechanism which guarantees partial hyperbolicity after composing with Dehn twists and allows one to avoid passing to finite covers. This mechanism is related to the one here yet involves some different ideas. The current construction may actually suit better when trying to understand the dynamics of new examples. 

We will leave to it a future paper to explore this different mechanism which will also unify the mechanisms for both (families of) examples presented in this paper (the ones with an incompressible torus transverse to the flow and the ones with an incompressible torus not transverse to the flow).

\subsection{Bonatti-Langevin's example}\label{ss.BL}

Notice that to this point we do not know if there is an Anosov flow transverse to a torus for which the posited family
of diffeomorphisms $\{\varphi_s\}_{s\in [0,1]}$ exists. For this purpose we shall introduce a specific class of Anosov flow examples
from~\cite{BL} which will also make  easy the task of showing volume preservation. It is plausible that other examples, \eg those that can be found in \cite{BBY}, also can serve as the Anosov flow ingredient in our construction. However we haven't checked it.

A relevant remark is that Proposition \ref{p.phofBL} can be applied to the suspension of a linear Anosov diffeomorphism of $\TT^2$
which gives rise to a manifold where every partially hyperbolic is leaf conjugate to an Anosov flow (\cite{HP}).
It is important to consider a manifold on which the diffeomorphism $h_N$ is not isotopic
to the identity in order to obtain a new example of partially hyperbolic dynamics.

A volume preserving transitive Anosov flow $X_0^t\colon M_0\to M_0$ was built in \cite{BL}
which admits a torus $T_0$ transverse to the flow. At the same time, the flow admits a
periodic orbit disjoint from $T_0$. We denote by $X_0$ the vector field generating this flow.

We will state in the following lemma the properties about this flow that we shall
use to construct a family $\varphi_s$ as in the previous subsection:

\begin{lemm}\label{l.foliationsinterT} There exist coordinates $\theta: T_0 \to \TT^2$ such that:

\begin{itemize}
\item In the coordinates $(x,y)$ of $\TT^2$ the foliation $F^{s}$
is a foliation with two horizontal circles and such that every
leaf is locally of the form $(x,g^s(x))$, where $g^s$ is a function with
derivative smaller than $\frac 14$.

\item In the coordinates $(x,y)$ of $\TT^2$ the foliation $F^{u}$
is a foliation with two vertical circles and such that every leaf
is locally of the form $(g^u(y),y)$, where $g^u$ is a function with
derivative smaller than $\frac 14$.

\item The flow is orthogonal to $T_0$ and if $\omega$ is the area form induced in
$T_0$ by the invariant volume form one has that $\theta_\ast(\omega)$ is the standard area form $dx \wedge dy$ on $\TT^2$.
\end{itemize}
\end{lemm}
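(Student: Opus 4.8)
The goal is to prove Lemma \ref{l.foliationsinterT}: the Bonatti--Langevin flow admits a transverse torus $T_0$ on which the induced weak foliations take a particularly controlled ``almost horizontal/vertical'' form in suitable coordinates, and in which the induced area form is standard.

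\textbf{Plan.} The strategy is to recall the explicit construction of the Bonatti--Langevin flow $X_0^t$ on $M_0$ from~\cite{BL}, where the manifold is assembled by gluing pieces, one of which is a ``DA-type'' or handle piece whose boundary is $T_0$, transverse to the flow. The flow crosses $T_0$ transversally, so the weak-stable foliation $W^{cs}_{X_0}$ and weak-unstable foliation $W^{cu}_{X_0}$ each intersect $T_0$ in a one-dimensional foliation $\cL^{cs}_{X_0}$, $\cL^{cu}_{X_0}$. These two traces are themselves transverse on $T_0$, since $W^{cs}$ and $W^{cu}$ intersect along the flow direction which is transverse to $T_0$. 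The content of the lemma is (i) that on $T_0$ each trace foliation has exactly two closed leaves (circles) and is otherwise a Reeb-type or ``almost linear'' foliation, (ii) that one can choose the identification $\theta\colon T_0\to\TT^2$ so that $\cL^{cs}$ becomes the horizontal foliation near its closed leaves with small slope $g^s$ in between (derivative $<1/4$), and symmetrically for $\cL^{cu}$, and (iii) that the invariant volume, being a product along the flow of a transverse area form times $dt$, induces on the transverse torus $T_0$ an area form which pulls back to $dx\wedge dy$.

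\textbf{Key steps, in order.} First I would pin down, from~\cite{BL}, the precise local model near $T_0$: the transverse torus is chosen so that the first-return behavior of the flow and the stable/unstable directions along $T_0$ are explicit; in the Bonatti--Langevin construction the flow near $T_0$ is, up to a smooth change of coordinates, linear in a flow box, and the strong stable / strong unstable bundles restricted to $T_0$ define foliations with the stated combinatorics (two circles each, transverse to each other). Second, I would identify the two closed leaves of $\cL^{cs}$ (respectively $\cL^{cu}$) — these come from the (finitely many, here two) intersections of $T_0$ with a weak-stable (resp. weak-unstable) leaf that happens to be closed or to return to $T_0$ in a controlled way — and declare them to be horizontal circles $\{y=\mathrm{const}\}$ (resp. vertical circles $\{x=\mathrm{const}\}$) in the coordinate $\theta$. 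Third, between the closed leaves the remaining leaves of $\cL^{cs}$ spiral from one closed leaf to the other; by a further $C^\infty$ change of the $y$-coordinate along each vertical fiber (which does not disturb the horizontal circles) one can flatten these leaves so that they are graphs $(x,g^s(x))$ with $|(g^s)'|<1/4$ — this is possible precisely because the construction gives us freedom to reparametrize the transverse coordinate, and the genuinely dynamical slope can be made as small as we like by the same reparametrization (this is a ``one-dimensional'' straightening, essentially making the foliation a small perturbation of the horizontal one). The symmetric adjustment makes $\cL^{cu}$ almost vertical; one must check the two adjustments are compatible (they act on complementary coordinate directions near the respective closed leaves, so transversality is preserved and the bound $1/4$ on both is achievable simultaneously). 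Fourth, for the volume statement: the Anosov flow $X_0$ preserves a smooth volume form $\mathrm{vol}$; since $X_0$ is transverse to $T_0$, in a flow box $T_0\times(-\eps,\eps)$ we may write $\mathrm{vol}=\omega_t\wedge dt$ where $\omega_t$ is an area form on $T_0$, and flow-invariance forces $\omega_t$ to be (cohomologous to, hence by a final coordinate adjustment equal to) a fixed form $\omega$; choosing $\theta$ to be an area-preserving identification $(T_0,\omega)\to(\TT^2,dx\wedge dy)$ — possible since $\TT^2$ and $T_0$ with a given area form are diffeomorphic by an area-preserving map after normalizing total area, which we may do by rescaling — gives $\theta_\ast\omega=dx\wedge dy$. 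The only subtlety is to perform the area normalization and the foliation-straightening \emph{compatibly}; I would first fix the combinatorial/topological normal form of the foliations, then do the area adjustment by a map isotopic to the identity supported away from where it would ruin the slope bounds, or alternatively observe that Moser's trick lets one absorb the area correction into a diffeomorphism $C^0$-close to identity, whose effect on the slopes is controlled.

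\textbf{Main obstacle.} The genuinely delicate point is the simultaneous achievement of all three normalizations — the two small-slope conditions on $F^s$ and $F^u$ \emph{and} the standard area form — by a single coordinate chart $\theta$. Individually each is routine (straightening a foliation near its closed leaves, and Moser-type normalization of an area form), but one must verify they do not conflict: the foliation-straightening in the $y$-direction near the horizontal circles and in the $x$-direction near the vertical circles are essentially independent, and the residual area correction can be made by a diffeomorphism so $C^1$-close to the identity (using that the total areas can be matched and Moser's path argument) that it changes the slopes of $F^s$, $F^u$ by an arbitrarily small amount, preserving the $1/4$ bounds. Making this ``commutation of normalizations'' precise — i.e. carefully ordering the coordinate changes and tracking that each later one perturbs the earlier normal forms only slightly — is where the real work lies; the rest is bookkeeping with the explicit~\cite{BL} model.
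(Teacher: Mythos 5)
Your plan diverges from the paper's proof, and it has a genuine gap at exactly the point you flag as ``where the real work lies.'' The paper does not derive the normal form abstractly at all: Lemma~\ref{l.foliationsinterT} is proved by direct inspection of the explicit construction in \cite{BL} --- the holonomy computation between the entry and exit tori (p.~639 of \cite{BL}) already exhibits the trace foliations $F^s$, $F^u$ in the stated graph form with derivative bounded by $\frac14$ in the coordinates used there, the identification of these holonomies with the traces of $W^{cs}$, $W^{cu}$ on $T_0$ comes from the end of the proof of the main theorem of \cite{BL} (pp.~642--643), and the statement about $\omega$ is \cite[Lemma 3.1]{BL}. In other words, the quantitative bound $\frac14$ and the standard area form are \emph{read off} from the explicit example; no straightening or Moser argument is needed or performed.

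Your soft reconstruction, by contrast, does not close. First, the two slope normalizations are not ``essentially independent'': the bounds $|(g^s)'|<\frac14$ and $|(g^u)'|<\frac14$ must hold in the \emph{same} global chart on all of $\TT^2$, not just near the respective closed leaves, and any $y$-reparametrization that flattens the leaves of $F^s$ (shrinking $dy/dx$) simultaneously steepens the leaves of $F^u$ viewed as graphs over $y$ (increasing $dx/dy$), and symmetrically for $x$-reparametrizations; so the two straightenings genuinely compete, and whether both cones can be achieved at once is a global question about the actual foliations, which your argument never settles. Second, the claim that the residual Moser correction is ``$C^1$-close to identity'' is unjustified: Moser's trick yields a diffeomorphism whose $C^1$ size is controlled by how far the pushed-forward density is from $1$, and after your straightening steps that density is not small a priori, so the correction may well destroy the $\frac14$ bounds. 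Finally, the assertion that the flow is orthogonal to $T_0$ is part of the lemma and is a feature of the specific metric/construction of \cite{BL}; your proposal does not address it. Without either carrying out the global compatibility argument or, as the paper does, appealing to the explicit coordinates and computations of \cite{BL}, the proof is incomplete.
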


\begin{proof}
This follows by inspection of the construction in \cite{BL}.

The first two properties follow directly from the computation of the holonomy between the entry torus and the exit torus performed in page 639 of~\cite{BL}.
The relation between $h_t$ and the intersection of the center-stable and center-unstable foliations with $T_0$
can be inferred from the arguments at the end of the proof of the main theorem (see the end of page 642 and page 643 of \cite{BL}).

Preservation of the volume form in this coordinates is provided by \cite[Lemma 3.1]{BL}.
\end{proof}

\begin{figure}[htbp]
\begin{center}
\includegraphics{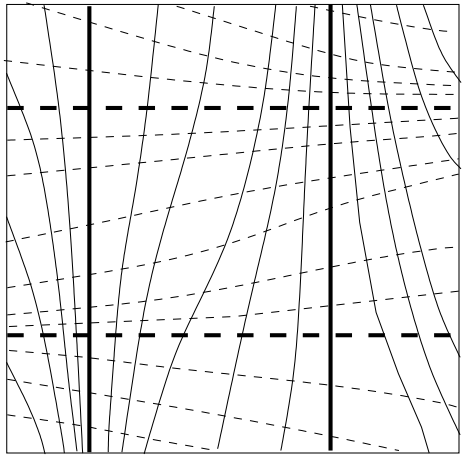}
\end{center}
 \caption{The foliations $F^s$ and $F^u$ in $\TT^2$ for the Bonatti-Langevin's example.}
\label{figHT3}
\end{figure}

Notice that for $X^t_0$ one can easily construct a family of diffeomorphisms $\{\varphi_s\}_{s \in [0,1]}$
consisting of translations in $\TT^2$ (which preserve the Lebesgue measure and transversality between the bundles)
and verifies the properties of the previous section.

\begin{coro}\label{c.constructionwithBL}
There exists a finite cover $M$ of $M_0$ and a number $N>0$ such that if we denote by $f$ the time-$N$ map
of the flow $X^t$ (the lift of $X^t_0$ to $M$) and a volume preserving diffeomorphism
$h_N \colon M \to M$ which is a Dehn twist along a lift $T$ of $T_0$ such that
the diffeomorphism $F= h_N \circ f$ is a conservative absolutely partially
hyperbolic diffeomorphism.
\end{coro}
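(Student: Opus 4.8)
The plan is to assemble Corollary~\ref{c.constructionwithBL} directly from the machinery already in place, with the Bonatti--Langevin flow of~\cite{BL} supplying the missing input: an honest family $\{\varphi_s\}_{s\in[0,1]}$ of the kind required in Subsection~2.3. First I would invoke Lemma~\ref{l.foliationsinterT} to fix the coordinates $\theta_T\colon T_0\to\TT^2$ in which $F^s$ is a nearly-horizontal foliation (leaves locally graphs $(x,g^s(x))$ with $|(g^s)'|<\tfrac14$) and $F^u$ is nearly-vertical (leaves locally graphs $(g^u(y),y)$ with $|(g^u)'|<\tfrac14$), and in which the induced area form is the standard $dx\wedge dy$. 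In these coordinates I would take $\varphi_s$ to be a loop of translations of $\TT^2$: explicitly, pick $\varphi_s(x,y)=(x,y)+ \beta(s)\cdot(1,0)$ where $\beta\colon[0,1]\to\RR$ is a smooth function that is $C^\infty$-flat and equal to $0$ near $s=0$, equal to $1$ near $s=1$, and monotone; since translation by $(1,0)$ is the identity on $\TT^2=\RR^2/\ZZ^2$, the path $s\mapsto\varphi_s$ is a genuine closed loop in $\diff(\TT^2)$, and it is not null-homotopic because it represents a generator of $\pi_1(\diff(\TT^2))\cong\ZZ^2$ (the loop winds once around the $x$-circle direction of the torus of translations). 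Each $\varphi_s$ preserves Lebesgue measure, so Remark~\ref{r.volumepreserving} applies.

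The next step is the transversality condition $\varphi_s(F^u)\pitchfork F^s$ for all $s$. This is where the slope bounds from Lemma~\ref{l.foliationsinterT} do the work: a translation carries $F^u$ to a foliation whose leaves are again locally graphs over the $y$-axis with derivative $<\tfrac14$ in absolute value (translations do not change slopes), while $F^s$ has leaves that are locally graphs over the $x$-axis with derivative $<\tfrac14$; two such line fields, one of ``slope'' in $(-\tfrac14,\tfrac14)$ measured against the horizontal and the other of ``slope'' in $(-\tfrac14,\tfrac14)$ measured against the vertical, are everywhere transverse, with the angle between them bounded below independently of $s$. So the hypotheses of Subsection~2.3 are all met for $X_0^t$.

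Having produced $\{\varphi_s\}$, I would feed it into the construction of Subsection~2.3: for each $N$ and $K$ one gets the finite cover $\hat M_{N,K}\to M_0$ of Proposition~\ref{p.cycliccovering} (with return time to $\hat T$ larger than $NK$), the flow box chart $H_N$, the fibered diffeomorphism $\cG_N$ on $\cU_N$, and its extension $h_N\colon\hat M_{N,K}\to\hat M_{N,K}$ by the identity. Proposition~\ref{p.phofBL} then gives: for all sufficiently large $N$ there is $K_0(N)$ so that for $K\ge K_0(N)$ the diffeomorphism $F=h_N\circ f_{N,K}$ is absolutely partially hyperbolic, where $f_{N,K}$ is the time-$N$ map of the lifted flow $\hat X$ on $\hat M_{N,K}$ (which is $(1,\lambda,\mu)$-partially hyperbolic for large $N$ by Proposition~\ref{p.cycliccovering} applied to the time-one map of $X_0^t$, rescaling time). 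We fix such $N$ and $K$, set $M=\hat M_{N,K}$, and let $X^t$ be the lift of $X_0^t$. For volume preservation I would apply Remark~\ref{r.volumepreserving} with $\omega$ the standard area form on $\TT^2$ pulled back by $\theta_T$: since each $\varphi_s$ is a translation it preserves $(\theta_T)_\ast\omega=dx\wedge dy$, so $h_N$ preserves $\omega\wedge dX$; and the Anosov flow $X_0^t$ preserves a volume form by~\cite{BL}, which by the third bullet of Lemma~\ref{l.foliationsinterT} restricts on $T_0$ to exactly this $\omega$ (up to the identification $\theta$), so the time-$N$ map $f$ preserves the same volume form. Hence $F=h_N\circ f$ is conservative. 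Finally $h_N$ is a Dehn twist along the lift $T$ of $T_0$: its support is $\cU_N$, a collar of $T$, and on $\cU_N$ it is $(s,x,y)\mapsto(s,x+\beta(s),y)$, which is precisely a Dehn twist along $T$ in the $x$-direction through the fundamental domain $\cU_N$.

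The main obstacle is verifying that the translation loop $s\mapsto\varphi_s$ is non-trivial in $\pi_1(\diff(\TT^2))$ in a way that survives the passage to $\diff(M)$ and yields the eventual conclusion that $F$ and its iterates are not homotopic to the identity (which is what makes the example genuinely new rather than a disguised time-$N$ map). That homotopical rigidity is not claimed in the Corollary's statement itself but is the point of the construction; it hinges on the analysis of the mapping class group of $M$ cut along $T$, in the spirit of the 3-manifold-topology arguments used for the $T^1S$ examples (Waldhausen, McCullough, Johannson) and developed in~\cite{BPP}, and will presumably be carried out in the sections on homology (Section~\ref{s.homology}). For the Corollary as stated, however, the only real work is checking transversality, which the quarter-slope bounds make routine, and matching the volume forms, which Lemma~\ref{l.foliationsinterT} hands us directly.
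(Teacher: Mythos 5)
Your proposal is correct and follows essentially the same route as the paper: the paper's proof simply observes that Lemma~\ref{l.foliationsinterT} lets one take $\{\varphi_s\}$ to be a loop of translations of $\TT^2$ (which preserve Lebesgue measure and, by the slope bounds, the transversality $\varphi_s(F^u)\pitchfork F^s$), and then quotes Proposition~\ref{p.phofBL} together with Remark~\ref{r.volumepreserving}. Your write-up merely spells out these verifications (and correctly notes that the non-triviality of the loop and the homotopical non-triviality of $F$ are handled separately, in Section~\ref{s.homology}), so it matches the paper's argument in substance.
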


\begin{proof} It follows directly from the previous lemma that there exists a family
$\{\varphi_s\}_{s\in [0,1]}$ with the properties required by Proposition \ref{p.phofBL}.
This gives the desired statement (see Remark \ref{r.volumepreserving} for the volume preservation).
\end{proof}

\subsection{The iterates of the Dehn twist are not isotopic to the identity}\label{s.homology}
According to \cite{Br}, every torus transverse to an Anosov flow is incompressible
(\ie its fundamental group is injected in the fundamental group of the ambient manifold). Furthermore, in the case of the \cite{BL}-example, the manifold obtained by cutting along the torus is a circle bundle, and the gluing map along the torus does not preserve the homology class (in the torus) of the fibers:
in the terminology of the topology of $3$-manifolds, this means that the torus belongs to the Jaco-Shalen-Johannson (JSJ) family of the ambient manifold. A result of Johannson~\cite[Proposition 25.3]{Johannson} (see also~\cite[Proposition 4.1.1]{McC}) provides a general criterion for proving that a Dehn twist along a given torus of the JSJ family, and its iterates, are not isotopic to the identity.  This criterium indeed applies to the Dehn twist along the transverse torus of the example of~\cite{BL} (see also \cite{BarbotBL}).

Nevertheless, there is a very elementary proof for the specific example given in Section~\ref{ss.BL}. The goal of this section is to present this simple proof.

In Section~\ref{ss.covering}, for any manifold $M$ endowed with a transitive Anosov flow $X$ and a torus $T$ transverse to $X$,  for any integer $n>0$, we build a  covering obtained by considering $n$ copies of the manifold $M_0$ with boundary obtained by cutting $M$ along $T$.
Let us denote by $\Pi_n\colon \tilde M_n\to M$ this $n$-cyclic covering.  Note that $\Pi_n^{-1}(T)$ is a family of $n$ disjoint tori $\hat  T_0,\dots, \hat T_{n-1}$.
In this section we will denote by $V$ the underlying manifold of \cite{BL}-example and by $\Pi_n\colon V_n\to V$ the $n$-cyclic covering defined above.

\begin{prop}\label{p.BL-isotopy} Let $n$ be an integer divisible by 4 and let the manifold $V$, the transverse torus $T$ and the covering
$\Pi_n\colon  V_n\to V$ be as described above. Let $\hat T_0,\dots, \hat T_{n-1}$ be the lifts of $T$ to $V_n$.
Then for any non-zero homology class $[\gamma]$  of $H_1(\hat T_0,\ZZ)$, the Dehn twist along $\hat T_0$ in the direction of $[\gamma]$ and its positive iterates are not homotopic to the identity map on $V_{n}$.
\end{prop}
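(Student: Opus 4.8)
The plan is to reduce the whole statement to a computation in $H_1(V_n;\mathbb{Q})$. Recall that a Dehn twist $\tau$ along the embedded torus $\hat T_0$ in the direction $[\gamma]\in H_1(\hat T_0;\mathbb{Z})$ is supported in a bicollar $\hat T_0\times(-1,1)$ and acts on first homology by the transvection
\[
\tau_\ast(x)=x+\big(x\cdot[\hat T_0]\big)\,j_\ast[\gamma],\qquad x\in H_1(V_n;\mathbb{Z}),
\]
where $x\cdot[\hat T_0]$ is the intersection number $H_1\times H_2\to\mathbb{Z}$ and $j_\ast\colon H_1(\hat T_0;\mathbb{Z})\to H_1(V_n;\mathbb{Z})$ is induced by inclusion; hence $\tau_\ast^{\,k}(x)=x+k\,(x\cdot[\hat T_0])\,j_\ast[\gamma]$. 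Since a self-diffeomorphism homotopic to the identity acts trivially on $H_1$, it suffices to show $\tau_\ast^{\,k}\neq\mathrm{id}$ for every $k\geq1$. Now $\hat T_0$ is non-separating in $V_n$: cutting $V_n$ open along $\hat T_0$ only turns the cyclic chain of $n$ copies of $V_0$ into a connected \emph{linear} chain, so there is a loop $\delta$ in $V_n$ with $\delta\cdot[\hat T_0]=1$. Plugging $x=\delta$ in gives $\tau_\ast^{\,k}(\delta)=\delta+k\,j_\ast[\gamma]$, so the proposition is equivalent to: for every nonzero $[\gamma]\in H_1(\hat T_0;\mathbb{Z})$ the class $j_\ast[\gamma]$ has infinite order in $H_1(V_n;\mathbb{Z})$, i.e.\ the map $H_1(\hat T_0;\mathbb{Q})\to H_1(V_n;\mathbb{Q})$ is injective when $4\mid n$.

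To prove this injectivity I would use the explicit structure of the \cite{BL}-manifold that the introduction advertises. Cutting $V$ along $T$ gives $V_0$, a trivial circle bundle $\Sigma\times S^1$ over a compact surface $\Sigma$ with two boundary circles, and $V$ is recovered by gluing the two boundary tori $T_1,T_2$ of $V_0$ by a diffeomorphism $\psi$ whose action on $H_1$ does not fix the fiber class up to sign (this is precisely what places $T$ in the JSJ family). Then $V_n$ is built from $n$ copies $V_0^{(0)},\dots,V_0^{(n-1)}$ glued cyclically along $\hat T_0,\dots,\hat T_{n-1}$, and I would compute $H_1(V_n;\mathbb{Q})$ from the Mayer--Vietoris sequence of this decomposition, equivalently from the Wang sequence of the infinite cyclic cover $V_\infty\to V_n$, which yields $H_1(V_n;\mathbb{Q})\cong\operatorname{coker}\!\big(t^n-1\colon H_1(V_\infty;\mathbb{Q})\to H_1(V_\infty;\mathbb{Q})\big)\oplus\mathbb{Q}$, with $H_1(V_\infty;\mathbb{Q})$ a finitely generated $\mathbb{Q}[t^{\pm1}]$-module. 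Since a curve on $\hat T_0$ may be pushed off $\hat T_0$, its class pairs trivially with every $[\hat T_i]$, so $j_\ast[\gamma]$ lands in the $\operatorname{coker}(t^n-1)$ summand, and it remains to check that the two standard generators $c$ (the base boundary curve) and $f$ (the fiber) of $H_1(\hat T_0;\mathbb{Z})$ stay $\mathbb{Q}$-linearly independent there.

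Tracking $c$ and $f$ through the gluings, this reduces to a statement about powers of the integral $2\times2$ matrix $B$ recording the action of $\psi$ on the relevant rank-two subspace of $H_1(V_0;\mathbb{Q})$: the classes $c,f$ survive in $H_1(V_n;\mathbb{Q})$ exactly when $B^{\,n}-\mathrm{id}$ does not annihilate that subspace. The hypothesis $4\mid n$ is the precise arithmetic condition that guarantees this --- the eigenvalue $1$, and the roots of unity carried by $B$ (coming from the shear in the \cite{BL}-gluing), being avoided precisely for $n$ divisible by $4$ --- so that $\lambda c+\mu f$ is non-torsion in $H_1(V_n;\mathbb{Q})$ for every $(\lambda,\mu)\neq(0,0)$.

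Combining everything: for $4\mid n$ every nonzero $[\gamma]\in H_1(\hat T_0;\mathbb{Z})$ has infinite order in $H_1(V_n;\mathbb{Z})$, so $k\,j_\ast[\gamma]\neq0$ and $\tau_\ast^{\,k}\neq\mathrm{id}$ for all $k\geq1$; hence neither the Dehn twist nor any positive iterate is homotopic to the identity. The main obstacle is the last two steps: pinning down from the \cite{BL}-construction the matrix $B$ together with the images of $c$ and $f$, and verifying that $n\equiv0\pmod4$ is exactly the congruence that keeps these classes alive in $H_1(V_n;\mathbb{Q})$. This explicit $3$-manifold bookkeeping is what replaces the appeal to Johannson's general criterion.
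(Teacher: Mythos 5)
Your general strategy (detect the twist by its transvection action $x\mapsto x+(x\cdot[\hat T_0])\,j_\ast[\gamma]$ on first homology) is the right kind of idea, but the reduction you make is based on a wrong model of the \cite{BL}-manifold, and it breaks at exactly the point the paper is designed to get around. Cutting $V$ along $T$ does \emph{not} give a trivial bundle $\Sigma\times S^1$ over an orientable surface: in the example of \cite{BL}, $V_0$ is a circle bundle over a \emph{non-orientable} base $B$ (the M\oe bius band with a disk removed), with fiber-reversing monodromy; only its $2$-fold cover $\tilde V_0=\tilde B\times S^1$ is a product. Because some loop in $B$ reverses the fiber, one has $c f c^{-1}=f^{-1}$ in $\pi_1(V_0)$, hence the fiber class satisfies $2[f]=0$ in $H_1(V_0;\ZZ)$ and therefore also in $H_1(V_n;\ZZ)$. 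Consequently the map $H_1(\hat T_0;\QQ)\to H_1(V_n;\QQ)$ is \emph{never} injective (the fiber direction dies rationally), so the statement you reduce to is false, and no Mayer--Vietoris/Wang computation with a $2\times2$ gluing matrix over $\QQ$ can rescue it; the hypothesis $4\mid n$ is not an eigenvalue condition of that sort. Worse, if $[\gamma]$ is the fiber class then $j_\ast[\gamma]$ is ($2$-torsion, possibly nonzero) in $H_1(V_n;\ZZ)$, so your formula gives $\tau_\ast^2=\mathrm{id}$ on all of $H_1(V_n)$: homology of $V_n$ alone cannot exclude that some iterate of the twist is homotopic to the identity. (Also note the reduction is only one-directional: acting nontrivially on $H_1$ is sufficient, not equivalent, to being non-homotopic to the identity.)

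The missing idea, which is the content of the paper's proof, is to pass to the fiber-orientation double cover $\pi_n\colon\tilde V_n\to V_n$, whose pieces are honest products $\tilde B\times S^1$ over an orientable base. There the Dehn twist lifts to a composition of two disjoint Dehn twists along $T_{0,0}\times\{i\}$ and $T_{0,1}\times\{i\}$, and the same transvection argument (applied to a loop $\sigma$ meeting $T_{0,0}\times\{i\}$ in one point) works \emph{provided} one proves Lemma~\ref{l.homology}: for $4\mid n$ the inclusion $H_1(T_{i,j}\times\{k\},\ZZ)\to H_1(\tilde V_n,\ZZ)$ is injective. This is where the divisibility by $4$ and the quarter-turn gluing (Remark~\ref{r.+}) actually enter: one builds, inside $\tilde V_4$, a \emph{closed oriented} surface $\Sigma_0$ meeting every circle fiber of $\tilde V_0\times\{0\}$ exactly once (horizontal sections in two opposite copies joined by vertical cylinders in the two intermediate copies; four copies and the orientation bookkeeping of the quarter turns are needed for $\Sigma_0$ to close up orientably), and then detects any nonzero class of the boundary tori by its intersection number with $\Sigma_0$ or with its translate $\Sigma_1$ under the deck transformation. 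So the correct proof keeps your homological detection principle but replaces your rational computation in $V_n$ (which fails) by integral intersection theory on the double cover, against explicitly constructed surfaces.
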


\subsubsection{Description of the ambient manifold $V$ and its $2$-cover}\label{ss.2cover}

Let $V_0$ be the compact manifold with boundary obtained by cutting $V$ along $T$.  By construction in~\cite{BL} we have the following facts:
\begin{itemize}
\item  the boundary of $V_0$ consists of two tori $T_0$ and $T_1$;
 \item $V_0$  is the total space of a circle bundle $S^1\to V_0\stackrel{p}{\to} B$;
 \item the base $B$ is the projective plane $\RR P^2$ with the interiors of two disjoint disks removed, or equivalently the M\oe bius
 band with the interior of a disk removed;
 \item the structure group of $p\colon V_0\to B$ contains orientation-reversing diffeomorphisms and $p\colon V_0\to B$ is the unique circle  bundle over $B$ for which the total space $V_0$ is oriented.
\end{itemize}
We denote by $\varphi\colon T_0\to T_1$ the orientation reversing gluing diffeomorphism (that is, $V=V_0/\varphi $).

We proceed with an explicit description of $p\colon V_0\to B$. Let
$$\tilde B= \RR/\ZZ \times [-1,1]\setminus \left(D\left((0,0),\frac12\right) \cup D\left(\big(\frac12,0\big),\frac12\right)\right)$$
where $D((t,0),\frac12)$ is the open disk of radius $\frac12$ centered at the point $(t,0)\in\RR/\ZZ\times [-1,1]$.
Let $$\tilde V_0=\tilde B\times \RR/\ZZ.$$
Note that $B$ is diffeomorphic to the quotient of $\tilde B$ by the involution without fixed points $(r,s)\mapsto (r+\frac12,-s)$ and $V_0$ is the quotient of
$\tilde V_0$ by the free involution $(r,s,t)\mapsto (r+\frac12,-s,-t)$.
We denote by $\pi_0\colon \tilde V_0\to V_0$ this 2-fold cover.

\begin{figure}[htbp]
\begin{center}
\includegraphics{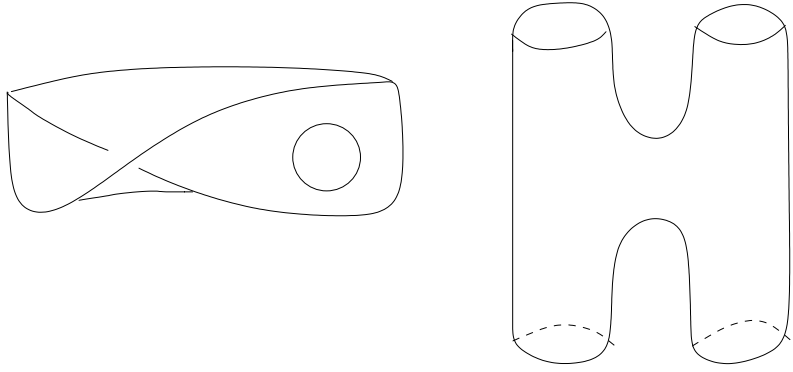}
\end{center}
 \caption{The base space $B$ (to the left) and its 2-fold covering $\tilde B$ (to the right).}
\label{figbandV0}
\end{figure}

Note that $\pi_0$ restricts to a diffeomorphism on each connected component of $\partial \tilde V_0$ (to a
connected component of $\partial V_0$). Let us denote
$$
\pi_0^{-1}(T_0)=T_{0,0}\sqcup T_{0,1} \mbox{ and }\pi_0^{-1}(T_1)=T_{1,0}\sqcup T_{1,1}.
$$
Let $\tilde \varphi_{0}\colon T_{0,0}\to T_{1,0}$ and $\tilde \varphi_{1}\colon T_{0,1}\to T_{1,1}$ be
the unique diffeomorphisms which project by $\pi_0$ to $\varphi$.
We denote by $\tilde \varphi\colon T_{0,0}\cup T_{1,0}\to T_{0,1}\cup T_{1,1}$ the assembled diffeomorphism which coincides with $\tilde \varphi_{i}$ on $T_{i,0}$, $i=0,1$.

Let
$$\tilde V  = \tilde V_0/ \tilde \varphi.$$
One can easily check that $\pi_0$  induces a covering map $\pi\colon \tilde V \to V$.

\subsubsection{Description of the gluing map $\varphi$}\label{ss-gluing}
The manifold $\tilde V_0$ is a product of $\tilde B$ and the circle $S^1$.  The horizontal $2$-foliation
(whose leaves are the $\tilde B\times \{t\}$) and the vertical circle bundle
 passes to the quotient by the involution on $V_0$.  They induce on each connected component
 $T_0,T_1,T_{0,0},T_{0,1},T_{1,0},T_{1,1}$ two transverse foliations by circles
  called respectively meridians (induced by the horizontal $2$-foliation) and parallels (or fibers).

According to \cite{BL} the gluing map $\varphi$ is chosen so that it maps the parallels and meridians of $T_0$ to meridians
and parallels of $T_1$, respectively. Thus  $\tilde \varphi$ exchanges parallels with meridians in the same way (for other constructions see \cite{BarbotBL}).

We will need the following remark.

\begin{rema}\label{r.+} Fix an orientation of $V_0$. It induces orientations of $T_0$ and of $T_1$ (as boundary orientation).  We orient $\tilde V_0$ so that the projection
$\pi_0$ preserves the orientation.  Now $T_{i,j}$ inherit of the boundary orientation and $\pi_0\colon T_{i,0}\cup T_{i,1}\to T_i$ is orientation preserving.

Up to changing all orientations, the gluing map $\varphi$ is a quarter of turn in the positive direction, that is,
(in coordinates putting the meridians and the fibers in horizontal and  vertical position)  a rotation by $+\frac\pi4$.

Because $\pi$ is orientation preserving, both restrictions of $\tilde \varphi$ to $T_{0,0}$ and $T_{0,1}$ (endowed with the boundary orientations) are rotations by $+\frac\pi4$.
\end{rema}

\subsubsection{$4m$-cyclic covers of $V$ and $\tilde V$; the proof of Proposition~\ref{p.BL-isotopy}}\label{ss.4mcover}

Recall that the cyclic covering $\Pi_n\colon V_n\to V$ is obtained by considering $n$ copies  $V_0\times \{i\}$, $i\in\ZZ/n\ZZ$, of $V_0$
and by gluing $T_0\times \{i\}$ with $T_1\times \{i+1\}$ using  $\varphi$.  In the same way be define a cyclic covering
$\tilde \Pi_n\colon \tilde V_n\to \tilde V$  obtained by considering $n$ copies  $\tilde V_0\times \{i\}$, $i\in\ZZ/n\ZZ$, of $\tilde V_0$
and by gluing $(T_{0,0}\cup T_{0,1})\times \{i\}$ with $(T_{1,0}\cup T_{1,1})\times \{i+1\}$ using  $\tilde \varphi$.
It is easily to check that $\pi_0$  induces a covering map $\pi_n\colon \tilde V_n \to V_n$ which projects to $\pi$.

Proposition \ref{p.BL-isotopy} is a corollary of the next lemma.

\begin{lemm}\label{l.homology} For any $n\in 4\NN\setminus\{0\}$,
the homomorphism $H_1(T_{i,j}\times\{k\},\ZZ)\to H_1(\tilde V_n)$,
$i,j\in\{0,1\}$, $k\in\ZZ/n\ZZ$ induced by the inclusion
$T_{i,j}\times\{k\}\subset \tilde V_n$ is injective.
\end{lemm}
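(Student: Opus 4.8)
The plan is to compute $H_1(\tilde V_n)$ explicitly via a Mayer--Vietoris / generators-and-relations argument, using the very concrete product structure $\tilde V_0=\tilde B\times S^1$ and the fact that $\tilde\varphi$ swaps meridians and parallels. First I would fix convenient bases of the first homology groups of all the tori involved. On each $\tilde V_0\times\{k\}$ we have $H_1\cong H_1(\tilde B)\oplus \ZZ\langle f\rangle$, where $f$ is the parallel (fiber) class; since $\tilde B$ is a four-holed sphere (a cylinder with two disks removed), $H_1(\tilde B)$ is free of rank $3$, generated by three of the four boundary circles, and the four boundary circles $m_{0,0},m_{0,1},m_{1,0},m_{1,1}$ (the meridians on the four boundary tori) satisfy exactly one relation, namely their signed sum is zero. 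So on the $k$-th copy the homology of $\tilde V_0\times\{k\}$ is generated by $m_{0,0}^{(k)},m_{0,1}^{(k)},m_{1,0}^{(k)},m_{1,1}^{(k)},f^{(k)}$ subject to one relation among the $m$'s. The class of a torus $T_{i,j}\times\{k\}$ in $H_1$ of its own copy of $\tilde V_0$ is spanned by $m_{i,j}^{(k)}$ and $f^{(k)}$.

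Next I would write down the gluing relations. Gluing $(T_{1,j}\times\{k\})$ to $(T_{0,j}\times\{k+1\})$ via $\tilde\varphi$, which (Remark~\ref{r.+}) is a quarter turn exchanging meridian and parallel — concretely, up to signs, $m_{1,j}^{(k)}\mapsto \pm f^{(k+1)}$ and $f^{(k)}\mapsto \pm m_{0,j}^{(k+1)}$. The manifold $\tilde V_n$ is then assembled from the $n$ copies of $\tilde V_0$ along these tori, so by Mayer--Vietoris $H_1(\tilde V_n)$ is the quotient of $\bigoplus_k H_1(\tilde V_0\times\{k\})$ by the subgroup generated by the differences coming from these identifications (together with the internal relations among the meridians on each copy). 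I would solve this presentation: the quarter-turn identifications let one express, going around the cyclic cover, $f^{(k)}$ in terms of $m_{0,j}^{(k+1)}$ and $m_{1,j}^{(k)}$ in terms of $f^{(k+1)}$, so the fiber and meridian classes get chained around the $n$ copies. The key point of the computation is to check that, because $4\mid n$, the chain of sign changes closes up consistently (rather than forcing the classes to be torsion or zero); this is exactly where the hypothesis $n\in 4\NN$ enters — a quarter turn has order $4$ as a rotation, so after four steps one returns to the original basis up to an even number of sign flips, and periodicity $n$ makes everything well defined. Once the presentation is solved one reads off that each generator $m_{i,j}^{(k)}$ and $f^{(k)}$ survives as a non-torsion element, and more precisely that the subgroup generated by $m_{i,j}^{(k)}$ and $f^{(k)}$ — i.e. the image of $H_1(T_{i,j}\times\{k\})$ — is a free abelian group of rank $2$ injecting into $H_1(\tilde V_n)$.

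Concretely, I expect the cleanest route is: pick the map $H_1(\tilde V_n)\to A$ to some explicit abelian group $A$ (for instance $H_1(\tilde V_n)$ itself after computing it, or a well-chosen quotient) under which the two classes $[m_{i,j}^{(k)}]$ and $[f^{(k)}]$ have independent, infinite-order images. Given the product structure, a natural candidate is to project each $\tilde V_0\times\{k\}$ onto its $S^1$-fiber direction and onto $H_1(\tilde B)$ and track how the cyclic gluing permutes these; the quarter-turn swap means a fiber in one block becomes a meridian in the next, so the "total" fiber-plus-meridian homology forms a single $\ZZ$-module on which the monodromy acts, and one checks this module is torsion-free of the expected rank when $4\mid n$. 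Injectivity of $H_1(T_{i,j}\times\{k\})\to H_1(\tilde V_n)$ then follows because the two basis classes land in distinct summands / have linearly independent images.

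The main obstacle I anticipate is purely bookkeeping: keeping the orientations and signs straight through the $2$-fold cover $\pi_0$, the quarter-turn gluing $\tilde\varphi$, and the $n$-fold cyclic identification all at once, and verifying that the sign discrepancies cancel precisely when $n$ is divisible by $4$ (and genuinely fail to cancel, producing torsion or collapse, when it is not — which is presumably why the hypothesis is stated). A secondary subtlety is making sure the single relation among the four meridians on each $\tilde V_0$ copy does not accidentally kill the relevant classes after the gluing relations are imposed; this should come out fine because $f^{(k)}$ is independent of all meridians in $H_1(\tilde V_0\times\{k\})$, but it must be checked. Once Lemma~\ref{l.homology} is in hand, Proposition~\ref{p.BL-isotopy} follows: a Dehn twist along $\hat T_0=T_{0,0}\times\{0\}$ (say) in the direction of a nonzero class $[\gamma]\in H_1(\hat T_0)$ acts on $H_1(\tilde V_n)$ by $x\mapsto x+(x\cdot[\hat T_0])[\gamma]$ — more precisely its effect is detected by the intersection pairing with the torus — and since $[\gamma]$ has infinite order in $H_1(\tilde V_n)$ by the lemma and the pairing is nondegenerate on the relevant classes, no positive power of the twist can act trivially, hence no positive iterate is homotopic to the identity.
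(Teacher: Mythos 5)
Your route is viable and genuinely different from the paper's. The paper never computes a presentation of $H_1(\tilde V_n)$: it constructs two explicit closed oriented surfaces $\Sigma_0$ and $\Sigma_1=\rho(\Sigma_0)$ (horizontal sections $\tilde B\times\{t\}$ in alternating blocks, joined by vertical cylinders of fibers), notes that $(T_{i,j}\times\{k\})\cap(\Sigma_0\cup\Sigma_1)$ is exactly one meridian plus one parallel, and detects every nonzero class of $H_1(T_{i,j}\times\{k\},\ZZ)$ by its intersection number with $\Sigma_0$ or $\Sigma_1$. There the only delicate point is orientability of $\Sigma_0$, which is exactly what Remark~\ref{r.+} supplies, and $4\mid n$ enters because the surface closes up in $\tilde V_4$ (equivalently, $\tilde V_n$ covers $\tilde V_4$ when $4\mid n$). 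Your Mayer--Vietoris computation buys more (an explicit description of the image of $\bigoplus_k H_1(\tilde V_0\times\{k\})$ in $H_1(\tilde V_n)$) at the cost of all the sign bookkeeping that the surfaces avoid.

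Two points need attention before your sketch is a proof. First, $H_1(\tilde V_n)$ is \emph{not} just the quotient of $\bigoplus_k H_1(\tilde V_0\times\{k\})$ by the gluing differences: the Mayer--Vietoris sequence contributes a further piece coming from $H_0$ of the $2n$ gluing tori (classes of loops crossing the tori, such as the curve $\sigma$ used in the proof of Proposition~\ref{p.BL-isotopy}). This is harmless -- exactness at the middle term says the kernel of $\bigoplus_k H_1(\tilde V_0\times\{k\})\to H_1(\tilde V_n)$ is precisely the image of $H_1$ of the tori, which is all that injectivity requires -- but it should be stated, since you repeatedly speak of computing $H_1(\tilde V_n)$ itself. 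Second, the decisive computation is deferred, and its outcome is not quite what you predict. Writing $c_k$ for the fiber class of the $k$-th block and eliminating the meridian generators via the gluing relations ($m_{0,j}^{(k)}=\pm c_{k+1}$ and $c_k=\pm m_{1,j}^{(k+1)}$), the only surviving relations are, for each $k$, the image of ``the four boundary circles of $\tilde B$ sum to zero,'' which by Remark~\ref{r.+} (both lifted gluings twist in the \emph{same} direction relative to the boundary orientations) becomes $2\bigl(c_{k+1}\pm c_{k-1}\bigr)=0$ with a sign independent of $k$. These relations couple only indices of equal parity, so they split into two cycles of length $n/2$; when $4\mid n$ each cycle has even length, the signs close up, and one checks that $\alpha c_k+\beta c_{k\pm1}=0$ forces $\alpha=\beta=0$, which is the lemma; when $n\equiv 2\pmod 4$ the same chase can force $c_k$ to be torsion. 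So the true role of $4\mid n$ is this parity/cycle-length argument (or the covering $\tilde V_n\to\tilde V_4$), not the order of the quarter turn as such. Note also that the resulting quotient is \emph{not} torsion-free (the classes $c_{k+1}\pm c_{k-1}$ are nonzero $2$-torsion in it), so you cannot conclude by claiming the whole module is torsion-free of the expected rank; you must argue with the specific rank-two subgroups, as above. With these corrections your argument goes through and gives an alternative, purely algebraic proof of Lemma~\ref{l.homology}.
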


\begin{proof}[Proof of Proposition~\ref{p.BL-isotopy}] The Dehn twist $h_\gamma$ on $V_n$ along $T_0\times\{i\}$ in the direction of $\gamma\in H_1(T_0,\ZZ)$
can be lifted to a diffeomorphisms $\tilde h_\gamma\colon\tilde V_n\to\tilde V_n$  which is the composition of two Dehn twists with disjoint supports, one along $T_{0,0}\times\{i\}$ and one along $T_{0,1}\times \{i\}$ in the direction of the lifts of $\gamma$.  For proving that $h_\gamma^k$, $k\in\ZZ\setminus \{0\}$, is not homotopic to the identity map of $V_n$, it is enough to prove that $\tilde h_\gamma^k$ is not homotopic to the identity on $\tilde V_n$.

Notice that, by construction,  $T_{0,0}\times\{i\} \cup T_{0,1}\times \{i\}$ does not disconnect $\tilde V_n$.
This implies that there is a closed path $\sigma$ which intersects  $T_{0,0}\times\{i\} \cup T_{0,1}\times \{i\}$ transversely at a single point contained in $T_{0,0}\times\{i\}$.
In particular, the homology class of $\sigma$ in $H_1(\tilde V_n,\ZZ)$ is non-trivial.  Let $[\gamma_0]$ be the homology class of the lift of $\gamma$ to $T_{0,0}\times\{i\}$. Lemma~\ref{l.homology}
 implies that the  class $[k.\gamma_0]$ in $H_1(\tilde V_n,\ZZ)$ does not vanish when $k\neq 0$.  Clearly
 $$\left(\tilde h_\gamma^k\right)_*([\sigma])=[\sigma]+k[\gamma_0]\neq [\sigma].$$
 Thus the action of $\tilde h^k_\gamma$ on homology is non-trivial, completing the proof.
\end{proof}

\subsubsection{Oriented surfaces in $\tilde V_4$ and the proof of Lemma~\ref{l.homology}}

\begin{lemm}There is a smooth oriented closed surface $\Sigma_0\subset \tilde V_4$ which intersects transversally each circle fiber of $\tilde V_0\times\{0\}$ at exactly one point.
 \end{lemm}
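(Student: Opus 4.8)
The plan is to build the surface $\Sigma_0$ by assembling pieces inside the four copies $\tilde V_0\times\{k\}$, $k\in\ZZ/4\ZZ$, in such a way that each piece is a section of the circle bundle over a sub-surface of $\tilde B$, and the pieces match up along the gluing tori. Recall that $\tilde V_0=\tilde B\times S^1$ is a trivial bundle, so over all of $\tilde B$ there is an obvious horizontal section $\tilde B\times\{t_0\}$ which meets each fiber transversally once; the whole difficulty is that the gluing map $\tilde\varphi$ exchanges meridians with parallels (it is a quarter-turn rotation, by Remark~\ref{r.+}), so a horizontal section on one side is glued to a ``vertical cylinder'' on the other side. The idea is therefore to interpolate: in $\tilde V_0\times\{k\}$ I take the section to be $\tilde B\times\{t_0\}$ away from a collar of $\partial\tilde B$, and inside the collar of each boundary torus I let the section ``turn'' — i.e. let the $S^1$-coordinate rotate by a quarter turn as one approaches the boundary — so that it presents a meridian curve on the torus $T_{1,j}\times\{k\}$ rather than a parallel. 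After gluing $(T_{1,*})\times\{k\}$ to $(T_{0,*})\times\{k+1\}$ via $\tilde\varphi$, which rotates by $+\pi/4$, this meridian becomes a parallel on the $T_{0,*}\times\{k+1\}$ side, hence is isotopic to a constant-$t$ circle there, so the pieces glue up to a closed surface. Going around all four copies, the accumulated rotation is $4\times(+\pi/4)=\pi$ — but I should track this more carefully: the point of using $n=4$ (a full turn being $2\pi$ on the $S^1$-fiber, a ``quarter turn'' meaning $\pi/4$ of the geometric circle which is a half-turn $\pi$ in angle, or a genuine $1/4$ of the fiber) is precisely that the total holonomy after $4$ steps is the identity, so the pieces close up consistently. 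I would make the ``quarter turn'' convention explicit at the start so the bookkeeping is unambiguous.

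The concrete construction proceeds in steps. First I fix, once and for all, the horizontal/vertical conventions: meridians are the images of $\tilde B$-directions (i.e. $\{pt\}\times[\text{meridian}]$ on each boundary torus coming from $\partial\tilde B\times S^1$... more precisely the curves $\partial_j\tilde B\times\{t\}$) and parallels are the fibers $\{pt\}\times S^1$. Second, over a collar $\partial\tilde B\times[0,1]\subset\tilde B$ of each of the boundary circles I define a ``twisting section'' $s(b,r)=(b,r,\theta(r))$ where $\theta\colon[0,1]\to S^1$ increases by exactly a quarter of the fiber $S^1$; this is smooth, is the horizontal section near $r=1$, and on the boundary $r=0$ its image is a meridian circle (since $b$ ranges over $\partial_j\tilde B$ and $\theta$ is constant there). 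Third, I define $\Sigma_0\cap(\tilde V_0\times\{k\})$ to be the horizontal section over the complement of the collars, glued to the twisting section over the collars, for each $k$. Fourth, I check that under the assembling diffeomorphism $\tilde\varphi$ (a quarter-turn), the boundary curve of the $T_{1,j}\times\{k\}$-piece (a meridian) maps to a parallel on $T_{0,j}\times\{k+1\}$, which is isotopic within that torus to the boundary curve of the horizontal-section piece there; a small further isotopy of the section near that collar makes the two agree on the nose, so the union over $k\in\ZZ/4\ZZ$ is a genuine smoothly embedded closed surface. Fifth and last, since $\Sigma_0$ contains an open piece equal to $\tilde B\times\{t_0\}$ inside $\tilde V_0\times\{0\}$ (away from the collars — this region is nonempty because $\tilde B$ is much larger than its collars), it meets each fiber of $\tilde V_0\times\{0\}$ over that region transversally at exactly one point. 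To get ``each circle fiber of $\tilde V_0\times\{0\}$'' I instead choose the horizontal value $t_0$ and arrange the twisting collars so that the total section, restricted to $\tilde V_0\times\{0\}$, is a global section meeting every fiber of $\tilde V_0\times\{0\}$ once; this is automatic if I make $\tilde V_0\times\{0\}$ the copy where no twisting occurs and push all four quarter-turns into the other three copies — possible because only the total holonomy around the cycle is constrained, not how it is distributed. Orientability of $\Sigma_0$ follows since each piece is a section of an oriented bundle over an oriented base, hence orientable, and the gluings can be chosen orientation-compatibly (this is exactly where $\tilde V_0$ — as opposed to $V_0$ — is used: $\tilde V_0$ is a product, so there is no Möbius-type obstruction).

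The main obstacle I anticipate is the matching-up step (the fourth step): verifying that the quarter-turn gluing map $\tilde\varphi$ carries the meridian boundary curve of one twisting section exactly to a curve isotopic to the parallel boundary curve of the next horizontal section, and that one can arrange the four twisting collars so that there is no leftover holonomy — i.e. that the section genuinely closes up and does not merely close up to a multi-section or a surface meeting some fiber in several points. This is where the hypothesis $4\mid n$ (here $n=4$) is essential and where I expect to need the explicit description of $\tilde\varphi$ as a $+\pi/4$ rotation from Remark~\ref{r.+}: four applications amount to a $+\pi$ rotation of the geometric circle, which on the $t$-coordinate of $S^1=\RR/\ZZ$ is translation by $1/2\cdot\ldots$ — again I would pin down the exact convention and then the consistency is a direct, if slightly fiddly, check. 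Everything else (smoothness, transversality over the horizontal region, orientability) is routine once the conventions are fixed.
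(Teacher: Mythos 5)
There is a genuine gap, and it is located exactly where you anticipated trouble: the matching-up step cannot be repaired, because the mismatch is homological, not merely up to isotopy. Your pieces are sections of the circle bundle over (parts of) $\tilde B$, so the trace of each piece on a boundary torus $T_{i,j}\times\{k\}$ is a curve transverse to the fibers, i.e.\ in the class (meridian)$\,+\,k\,$(fiber); with your twist $\theta=\theta(r)$ depending only on the collar coordinate it is in fact a pure meridian. The gluing map $\tilde\varphi$ exchanges the meridian and parallel (fiber) classes, so the image of that trace in $T_{1,j}\times\{k+1\}$ is a circle \emph{fiber}. A fiber is not isotopic in the torus to the boundary trace of any section piece on the other side (which is always a graph over the meridian), so ``a small further isotopy'' cannot make the two agree, and the pieces simply do not glue. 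The same homological obstruction shows that your global picture is impossible in principle: no closed surface in $\tilde V_4$ can be transverse to the circle fibers in \emph{all} four copies, since its trace on a gluing torus would have to be of meridian type as seen from one side and of fiber type as seen from the other. Relatedly, your ``total holonomy $4\times(\pi/4)$'' bookkeeping is not the relevant mechanism: the role of four copies is not a fiber-rotation holonomy cancelling out, and pushing the twisting into three of the copies cannot circumvent the class exchange at each gluing torus.

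The paper's construction accepts this obstruction and makes the surface vertical where it must be: take the horizontal sections $S\times\{0\}=\tilde B\times\{0\}\times\{0\}$ and $S\times\{2\}$ only in copies $0$ and $2$; under the gluings their boundary meridians appear inside copies $1$ and $3$ as circle fibers on the tori $T_{1,j}\times\{1\}$, $T_{0,j}\times\{1\}$ (and likewise in copy $3$), and these fibers are then joined inside copies $1$ and $3$ by cylinders that are one-parameter families of circle fibers. The union is closed, meets every fiber of $\tilde V_0\times\{0\}$ exactly once (over copy $0$ it is just the section $S\times\{0\}$), and orientability is checked by comparing the orientations of the glued fibers via the quarter-turn description of $\tilde\varphi$ in Remark~\ref{r.+}. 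If you want to salvage your approach, you must likewise allow parts of the surface to be unions of fibers (or abandon transversality to the fibers outside copy $0$); as written, the proposed surface does not exist.
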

\begin{proof} Let  $S= \tilde B\times \{0\}\subset \tilde V_0$. It is a surface with boundary which intersects each circle fiber in exactly one point.

We consider $S\times\{0\}\subset \tilde V_0\times\{0\}\subset \tilde V_4$ and $ S\times\{2\}\subset \tilde V_4$.  Consider the intersection of these surfaces
with $\tilde V_0\times \{1\}$:
\begin{itemize}
\item $S\times\{0\}\cap \tilde V_0\times \{1\}$ is the image by $\tilde \varphi$ of  $(S\cap (T_{0,0}\cup T_{0,1}))\times \{0\}$.
In other words, it consists of exactly one circle fiber  in $T_{1,0}\times \{1\}$ and one circle fiber in $T_{1,1}\times \{1\}$.
 \item in the same way $S\times\{1\}\cap \tilde V_0\times \{1\}$ is the image by $\tilde \varphi^{-1}$ of  $(S\cap (T_{1,0}\cup T_{1,1}))\times \{2\}$ that is,
 it consists of exactly one circle fiber  in $T_{0,0}\times \{1\}$ and one circle fiber in $T_{0,1}\times \{1\}$.
\end{itemize}
There are two disjoint cylinders $C_0\times\{1\}$ and $C_1\times\{1\}$, each being a
$1$-parameter family of circle fibers in $\tilde V_0\times\{1\}$ such that:
\begin{itemize}
\item the cylinder $C_0\times\{1\}$ connects the fiber in $T_{1,0}$ with the one in $T_{0,0}$ and
\item the cylinder $C_1\times \{1\}$ connects the fiber in $T_{1,1}$ with the one in $T_{0,1}$.
\end{itemize}

The surface we obtain still has four boundary components which are circle fibers in $\tilde V_0\times \{3\}$. And each
 boundary components of $\tilde V_0\times \{3\}$ contains precisely one of these circles.  We obtain the posited surface $\Sigma_0$ by gluing in the cylinders $C_0\times \{3\}$ and $C_1\times \{3\}$.

\begin{figure}[htbp]
\begin{center}
\includegraphics{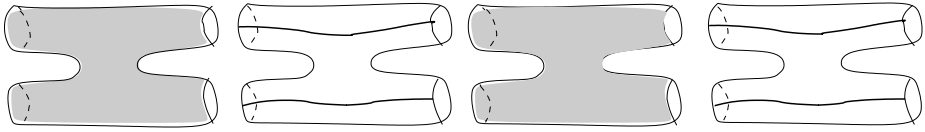}
\end{center}
 \caption{{\small Construction of the surface $\Sigma_0$. It consists of the shaded surfaces (corresponding to leaves  of the horizontal foliation) and the product of the curves connecting the boundaries multiplied by the fibers in the parts where the surfaces are not shaded.}}
\label{fig4cover}
\end{figure}

 To finish the proof it remains to see that $\Sigma_0$ is orientable. For this we need to pay attention to the orientations of the fibers glued by the cylinders.
 More precisely, orientability follows from our description of the gluing map and the following properties:
 \begin{itemize}
  \item the orientations of the circle fibers in $T_{1,0}\times \{1\}$ and in $T_{1,1}\times \{1\}$, (viewed as the image by $\tilde \varphi$ of  boundary components
 of the surface $S\times\{0\}$) are the same (as circle fibers in $V_0\times\{1\})$.
 \item the orientations of the circle fibers in $T_{0,0}\times \{1\}$ and in $T_{0,1}\times \{1\}$, (viewed as the image by $\tilde\varphi^{-1}$  of boundary components
 of the surface $S\times\{3\}$) are the same (as circle fibers in $V_0\times\{s\})$.
 \end{itemize}
 But these follow from Remark~\ref{r.+}, completing the proof of the lemma.
\end{proof}

Recall that $\tilde V_4$ is a cyclic $4$-cover of $\tilde V_0$. Let $\rho$ be a generator of the Deck transformation group (sending $\tilde V_0\times \{i\}$ on
$\tilde V_0\times \{i+1\}$).  Let $\Si_1=\rho(\Sigma_0)$.
We can now conclude the proof of Lemma~\ref{l.homology} and therefore of Proposition~\ref{p.BL-isotopy}.

\begin{lemm}Any non-trivial $[\gamma]\in H_1(T_{i,j}\times \{k\},\ZZ)$ has a non-zero intersection  number with either $\Sigma_0$ or $\Sigma_1$.
\end{lemm}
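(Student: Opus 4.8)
The plan is to compute intersection numbers of a given non-trivial class $[\gamma]$ in a fiber torus $T_{i,j}\times\{k\}$ against the two closed oriented surfaces $\Sigma_0$ and $\Sigma_1$, using the fact that $H_1$ of a torus is generated by the meridian and the parallel (fiber) classes. First I would record what $\Sigma_0$ (and hence $\Sigma_1=\rho(\Sigma_0)$) looks like near each of the eight boundary tori $T_{i,j}\times\{k\}$ it meets: by the construction of $\Sigma_0$, in the copies $\tilde V_0\times\{0\}$ and $\tilde V_0\times\{2\}$ it agrees with the horizontal surface $S=\tilde B\times\{0\}$, so it meets each circle fiber transversally in one point and is disjoint from the meridians; while in the copies $\tilde V_0\times\{1\}$ and $\tilde V_0\times\{3\}$ it is a union of cylinders, each of which is a one-parameter family of circle fibers, so near those tori it contains a meridian circle and meets each fiber in a single point of a fiber-parallel arc. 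Thus, at every torus $T_{i,j}\times\{k\}$, the surface $\Sigma_0$ (if it meets that torus at all) intersects the meridian class in $0$ and the parallel (fiber) class in $\pm1$; applying the Deck transformation $\rho$, the same holds for $\Sigma_1$, but with the roles of the levels shifted by $1$, so that $\Sigma_1$ agrees with the horizontal surface in the levels $\tilde V_0\times\{1\}$ and $\tilde V_0\times\{3\}$ and is cylindrical in levels $\{0\}$ and $\{2\}$.

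Next I would split into cases according to the level $k\in\ZZ/4\ZZ$ of the torus $T_{i,j}\times\{k\}$. Write $[\gamma]=a[\mu]+b[\phi]$ with $[\mu]$ the meridian and $[\phi]$ the parallel/fiber class, $(a,b)\neq(0,0)$. In two of the four levels (say $k=0,2$) the surface $\Sigma_0$ restricts to the horizontal leaf near $T_{i,j}\times\{k\}$, so $[\gamma]\cdot\Sigma_0=\pm b$; meanwhile in those same levels $\Sigma_1$ is cylindrical and, since a cylinder of fibers is an annulus whose boundary is two fiber circles, it intersects the meridian class transversally — here I would use the precise local picture from the cylinder construction to see that $[\gamma]\cdot\Sigma_1=\pm a$. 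Hence if $b\neq0$ we are done using $\Sigma_0$, and if $b=0$ then $a\neq0$ and we are done using $\Sigma_1$. In the complementary levels $k=1,3$ the roles of $\Sigma_0$ and $\Sigma_1$ are interchanged by the symmetry $\rho$, so the same dichotomy applies. Either way some intersection number is nonzero, which is exactly the assertion of the lemma; combined with the already-established statement that $\Sigma_0,\Sigma_1$ are closed oriented surfaces, this shows the class of $[\gamma]$ pairs non-trivially with a homology class and hence is itself non-trivial in $H_1(\tilde V_4)$, proving Lemma~\ref{l.homology} (and thereby Proposition~\ref{p.BL-isotopy}).

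The main obstacle I expect is bookkeeping the orientations carefully enough to be sure the intersection numbers with the meridian and fiber classes are genuinely $\pm1$ and not something that could vanish after cancellation — in particular, making sure that when $\Sigma_0$ meets a torus along a meridian it does so with a well-defined transverse orientation so the pairing with the fiber class is $\pm1$, and symmetrically that the cylindrical pieces meet the meridian class with sign $\pm1$. This is precisely the point where Remark~\ref{r.+} (the gluing map $\tilde\varphi$ is a positive quarter turn, so it sends parallels to meridians coherently on both components $T_{0,0}$ and $T_{0,1}$) gets used: it guarantees that the fiber circles being glued by the cylinders $C_0,C_1$ carry matching orientations, so that $\Sigma_0$ really closes up to an oriented surface and the local intersection signs do not conspire to cancel. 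The rest is the routine observation that intersection number is a homological invariant, so it suffices to evaluate it on the standard generators of $H_1(T_{i,j}\times\{k\},\ZZ)$.
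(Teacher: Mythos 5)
Your operative argument (the level-by-level computation in your second paragraph) is exactly the paper's proof: the paper simply observes that each torus $T_{i,j}\times\{k\}$ meets $\Sigma_0\cup\Sigma_1$ in exactly one meridian and one parallel, one circle coming from each surface, so a class $a[\mu]+b[\phi]$ pairs with the two surfaces as $\pm b$ and $\pm a$ (in some order) and one of these is non-zero. So the proposal is correct and takes the same route.

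One sentence in your first paragraph is stated backwards and should be fixed, since as written it contradicts your own case analysis. The cylindrical pieces of $\Sigma_0$ in the levels $\tilde V_0\times\{1\}$ and $\tilde V_0\times\{3\}$ are unions of circle fibers, so their trace on an adjacent torus, in the coordinates of the label $T_{i,j}\times\{k\}$ with $k$ odd, is a \emph{fiber} (parallel) circle which meets each meridian once --- not ``a meridian circle \dots meeting each fiber in a single point.'' (The circle only becomes a meridian when read through $\tilde\varphi$ in the coordinates of the neighbouring even level, which is where the confusion comes from.) Consequently the blanket claim ``at every torus $\Sigma_0$ intersects the meridian class in $0$ and the parallel class in $\pm1$'' is false for $k$ odd, where the pairings are $\pm1$ and $0$ respectively; if one tried to use that blanket statement (for both $\Sigma_0$ and $\Sigma_1$) the lemma would fail for classes proportional to one generator. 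Your second paragraph does not rely on it and is correct, so the fix is just to delete or restate that sentence per level; also note that $\Sigma_0$ in fact meets all eight tori, so the caveat ``if it meets that torus at all'' is unnecessary.
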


For the proof notice that $(T_{i,j}\times \{k\})\cap (\Sigma_0\cup \Sigma_1)$ is the union of exactly one meridian and one parallel.

\subsection{Stable ergodicity and robust transitivity}\label{ss.RobustTransBL}

The same argument as in Subsection \ref{sect:RTSE-T1S} shows that in this setting one also obtains stable ergodicity and robust transitivity by
a small $C^1$ volume preserving perturbation. 

The argument only uses the existence of a circle leaf where the dynamics is a rotation; in this example, as Bonatti-Langevin's example has a periodic orbit which is disjoint from the transverse tori, this orbit is not altered by the modifications one has made and therefore, (any of the lifts) of this orbit remains a closed normally hyperbolic center curve whose dynamics is conjugate to a rotation and it can be used for the argument. 

This completes the proof of Theorem \ref{t.main} for this family of manifolds.

\subsection{Periodic center leaves}\label{s.saddlenode}

In \cite[Chapter 7]{BDV} the classification of transitive partially hyperbolic systems in dimension 3 is also discussed.
Problem 7.26 is related to the structure of the center leaves of a transitive partially hyperbolic diffeomorphisms.
It asks whether it is possible to have periodic points both in circle leaves and in leaves homeomorphic to the line.
It also asks whether it is possible that a transitive partially hyperbolic diffeomorphism has a periodic
center leaf homeomorphic to $\RR$ for which one end is contracting and the other one is expanding
(such leaves are called \emph{saddle node leaves}; see \cite[Section 7.3.4]{BDV}).

We shall show that some of our examples 
admit both periodic center leaves which are lines as well as periodic center leaves  which are circles. The existence of complete curves tangent to $E^c$ invariant under some iterate of $f$ through periodic points is established in \cite{BDU,HHU-centerleafs}. 
%

\begin{prop}\label{p.centerleaf} There exists a volume preserving, robustly transitive absolutely partially hyperbolic diffeomorphism $F$ on a
closed $3$-manifold, which has periodic points on normally hyperbolic invariant circles
and periodic points which do not belong to any periodic invariant circle tangent to the center bundle.
\end{prop}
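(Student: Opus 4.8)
The plan is to use the second family of examples, based on the Bonatti--Langevin Anosov flow, since by construction these possess a periodic orbit $\mathcal{O}$ of the flow which is disjoint from the transverse torus $T_0$ (and hence from all its lifts and from the support of every Dehn twist $h_N$). Concretely, I take $F = h_N\circ f_{N,K}$ as in Corollary~\ref{c.constructionwithBL} (or its stably ergodic, robustly transitive perturbation produced in Subsection~\ref{ss.RobustTransBL} via Proposition~\ref{prop:volumepresperturbationRT}, keeping the perturbation supported away from a lift $\hat{\mathcal{O}}$ of $\mathcal{O}$). Then $\hat{\mathcal{O}}$ remains a closed orbit of the flow, it is normally hyperbolic (being a hyperbolic periodic orbit of an Anosov flow), it is tangent to $E^c_F$, and the restriction of $F$ to it is the time-$N$ map of a periodic flow, hence conjugate to a rotation; thus $\hat{\mathcal{O}}$ is a normally hyperbolic invariant circle containing periodic points of $F$. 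This gives the first half of the statement essentially for free.

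For the second half I must exhibit a periodic point $p$ of $F$ such that $p$ lies on no $F$-periodic circle tangent to $E^c$. The natural candidate is a periodic point whose $E^c$-curve (the center leaf through it, which exists through periodic points by \cite{BDU,HHU-centerleafs}) is a \emph{line} rather than a circle — and I would look for one realized as a saddle-node leaf. The key step is to locate a periodic orbit of the underlying Anosov flow $X^t_0$ which, together with a regular point, meets the transverse torus $T_0$; after passing to the finite cyclic cover and composing with the Dehn twist $\cG_N$ on the fundamental domain $\mathcal{U}_N$, the center leaf through the corresponding lifted periodic point is forced to pass through $\mathcal{U}_N$, where the Dehn twist acts, and I claim its two ends are no longer equivalent: one end "winds" under the accumulated shear while the other does not. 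More precisely, I would analyze the center leaf as an orbit of the induced (topological) center flow, track how it re-enters $\mathcal{U}_N$ at successive returns to $\hat T$, and show using the explicit form $(s,x,y)\mapsto (s,\varphi_s(x,y))$ of $\cG_N$ — together with the fact that $\varphi_s$ is a loop that is nontrivial in $\diff(\TT^2)$ (Lemma~\ref{l.foliationsinterT} and the surrounding discussion) — that the center leaf through $p$ is a properly embedded line along which $F$ translates, with one end attracted and the other repelled inside the leaf. Such a leaf cannot be a circle, and moreover no \emph{other} $F$-periodic circle through $p$ tangent to $E^c$ can exist, since through a periodic point the $F^k$-invariant center curve through it is essentially unique (uniqueness of center curves through periodic points, cf. the results of \cite{BDU,HHU-centerleafs}).

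The main obstacle I anticipate is precisely this last point: proving that the center leaf through the chosen periodic point is genuinely noncompact (a line, ideally a saddle node) rather than a circle, and proving the uniqueness needed to rule out a competing periodic invariant circle. Because $F$ is built by composing a time-$N$ flow map with a Dehn twist of relatively large support, we do not a priori have dynamical coherence or good control on $E^c_F$ globally (this is flagged as an open question in the paper); so the argument must be local to the relevant orbit. My strategy to handle this is to restrict attention to a neighborhood of the periodic orbit where $E^c$ is uniquely integrable (hyperbolicity of the orbit gives a local center-stable/center-unstable structure, hence a well-defined center curve), then use a shadowing / cone-field argument to follow this center curve forward and backward and detect the asymmetry of its two ends coming from the nontrivial loop $\{\varphi_s\}$. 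Once the saddle-node behavior is established along one end, compactness of the leaf is immediately excluded, and the uniqueness of the center curve through the periodic point (within the local center foliation and then globally by analytic continuation along the orbit) prevents any periodic invariant circle through $p$ tangent to $E^c$. Combining this with the robust transitivity and volume preservation already established for $F$ yields the proposition.
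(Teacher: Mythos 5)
There is a genuine gap in the second half of your argument, and it is exactly at the point you flag yourself. You propose to exhibit a periodic point whose center curve is a non-compact (saddle-node) line, and then to exclude periodic center circles through that point by a uniqueness claim. Neither step is available: the saddle-node/line behaviour of center leaves for these examples is not established anywhere (the paper explicitly leaves it as an open question at the end of Section~\ref{s.saddlenode}), and your sketch (``one end winds under the accumulated shear'') is not an argument — the examples are not known to be dynamically coherent, so there is no global center flow to track, and a local cone-field/shadowing analysis does not by itself control the topology of a complete curve tangent to $E^c$. The uniqueness you invoke is also unsupported: \cite{BDU,HHU-centerleafs} give \emph{existence} of complete invariant center curves through periodic points, not uniqueness, and without unique integrability of $E^c$ a priori several invariant curves tangent to $E^c$ could pass through the same point. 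So your route neither proves the leaf is a line nor rules out a competing periodic circle.

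The paper's proof avoids all of this with a homological mechanism you have missed. Working on the covers $\tilde V_{4m}$ so that Lemma~\ref{l.homology} applies, one argues as in Proposition~\ref{p.BL-isotopy} that $[f^n(C)]=[C]+i(C)\cdot[\gamma]$ for any closed curve $C$, where $i(C)$ is the algebraic intersection number with the torus $\tilde T_{4m}$; hence any $f$-periodic circle must have $i(C)=0$. On the other hand, Lemma~\ref{l.transverse} shows that for $N,m$ large the center bundle is orientable and transverse to $\tilde T_{4m}$, so \emph{every} closed curve tangent to $E^c$ passing near the torus has $i(C)\neq 0$ and therefore cannot be periodic. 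Robust transitivity then supplies (after an arbitrarily small perturbation) hyperbolic periodic points near the torus, and the complete center curves through them given by \cite{BDU,HHU-centerleafs} are periodic and meet the torus, so these points lie on no periodic center circle — with no need for uniqueness or for the leaf to be a line. Your first half is essentially the paper's, but note one small slip there too: a circle on which the dynamics is conjugate to a (possibly irrational) rotation need not carry periodic points, so you cannot insist the perturbation avoid $\hat{\mathcal O}$; the paper instead perturbs slightly so that each persisting normally hyperbolic circle contains hyperbolic periodic points.
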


\begin{proof} The diffeomorphism is obtained following the procedure which we described in this section, starting with a transitive Anosov
flow  $X$ with a transverse torus, then considering a large finite
cyclic cover and composing the time $N$ map (for some $N$ large enough) and a
Dehn twist $h_N$ along one component of the lift of the transverse torus.

Specifically, recall notation of Section~\ref{s.homology} : let $V$ be
the closed $3$ manifold endowed with the of Anosov flow $X$ which admits a transverse torus. The map $\pi\colon\tilde V\to V$ is the
$2$-fold cover defined in Section~\ref{ss.2cover}.
Also recall that $\tilde V_4$ and, more generally, $\tilde V_{4m}$ are the cyclic covers of $\tilde V$ defined in Section~\ref{ss.4mcover}. Let
$\tilde X_{4m}$ denote the lift of $X$ on $\tilde V_{4m}$.   The Anosov flow $\tilde X_{4m}$ admits at least $8m$ non-isotopic transverse tori denoted by
$T_{i,j}\times\{k\}$, $i,j\in\{0,1\}$, $k\in \ZZ/4m\ZZ$ following the  notation of Section~\ref{ss.4mcover}.
Here we will denote by $\tilde T_{4m}$ the torus
$T_{0,0}\times \{0\}\subset \tilde V_{4m}$.

Our starting Anosov flow is $(\tilde V_4,\tilde X_4)$, with the transverse torus $\tilde T_0$.
Notice that $(\tilde V_{4m}, \tilde X_{4m})$ is a cyclic cover of $(\tilde V_4,\tilde X_4)$ which respects dynamics. The torus $\tilde T_{4m}$ is one  of the connected components of the
lift of $\tilde T_0$ (the projection from $\tilde T_{4m}$ to $\tilde T_0$ is a diffeomorphism), and the return time of $\tilde X_{4m}$ to $\tilde T_{4m}$ tends
to $+\infty$ as $m\to+\infty$.  We can repeat the construction we have performed in Section~\ref{ss.BL}: for a sufficiently large $N>0$ there exist $m$
such that the composition of the time $N$-map of the flow of $\tilde X_{4m}$ and the Dehn twist $h_N$ along $\tilde T_{4m}$ (in the direction
$[\gamma]\in H_1(\tilde T_{4m},\ZZ)$)  is partially hyperbolic and volume preserving. Further,
 an extra small perturbation yields a robustly transitive stably ergodic diffeomorphisms $f$.

Now, as a consequence of Lemma~\ref{l.homology} (arguing exactly as in the proof of Proposition~\ref{p.BL-isotopy}) we obtain: given any closed curve
$C\subset \tilde V_{4m}$ whose homological intersection number $i(C)$ with $\tilde T_{4m}$ does not vanish, and for any $n\neq 0$ we have
$$[f^n(C)]=[C]+i(C)\cdot[\gamma]\neq [C]\in H_1(\tilde T_{4m},\ZZ).$$
Hence, if $C$ is a periodic circle for $f$ then its intersection number $i(C)$ vanishes.

\begin{lemm}\label{l.transverse} For sufficiently large $N$ and $m$, the center bundle of $f$ is orientable and transverse to $\tilde T_{4m}$.
\end{lemm}

Now we finish the proof of Proposition~\ref{p.centerleaf} assuming the above lemma. Because center bundle is oriented and transverse to $\tilde T_{4m}$,
any closed center curve passing in a neighborhood of $\tilde T_{4m}$ has a non-trivial intersection number with $\tilde T_{4m}$,
and therefore cannot be periodic. On the other hand, because $f$ is robustly transitive, up to performing an arbitrarily small $C^1$-perturbation, it admits
hyperbolic periodic points in the neighborhood of $\tilde T_{4m}$.

In \cite{BDU} the existence of complete invariant center curves through these periodic points is shown (see also \cite{HHU-centerleafs}).
These curves are periodic and intersect $\tilde T_{4m}$, therefore, they cannot be circles.

Finally, the support of the Dehn twist $h_N$ is disjoint from a hyperbolic basic set of
the vector field $\tilde X_{4m}$ and this basic set contains periodic orbits
of $\tilde X_{4m}$.  The restriction of $f$ to this basic set is a small perturbation of the time-$N$ map of the flow $X_{4m}$. Hence these invariant circles
persists as normally hyperbolic invariant circles. Up to performing an arbitrarily small perturbation, one may assume that each of these circles
contains a hyperbolic periodic points. Hence, we have shown that $f$ has both: periodic points on invariant center circles
and periodic points not in invariant center circles, concluding the proof.

 It remains to establish Lemma~\ref{l.transverse}.
\end{proof}


\begin{proof}[Proof of Lemma~\ref{l.transverse}]
Let $E^{cs}_X$ $E^{cs}_f$, $E^{cu}_X$ $E^{cu}_f$ be the center stable and unstable bundles of the vector field $\tilde X_{4m}$ and of $f$, respectively.
When $m$ tends to $\infty$ the time return of $\tilde X_{4m}$ to $\tilde T_{4m}$ also tends to infinity. As a consequence we obtain that,
on the fundamental domain $\cU_N$, the bundle $E^{cs}_f$ tends to $E^{cs}_X$ and $E^{cu}_f$ tends to $Dh_N(E^{cu}_X)$.  Therefore $E^c_f$ tends to
$E^{cs}_X\cap Dh_N(E^{cu}_X)$ which is everywhere transverse to the $\TT^2$ fibers in the fundamental domain $\cU_N$, concluding.
 \end{proof}

We expect that our examples should posses saddle node center leaves, we state this as a question here.

\begin{quest}
Does $f$ admit a periodic saddle node leaf?
\end{quest}

\medskip

{\noindent\textit{Acknowledgments.} The second author would like to thank Dmitry Scheglov for many enlightening discussions
and Anton Petrunin for a very useful communication. The third author thanks the hospitality of
the Institut de Mat\'ematiques de Bourgogne, Dijon.}

\end{document}